\DeclareSymbolFont{rsfscript}{OMS}{rsfs}{m}{b}
\DeclareSymbolFontAlphabet{\mathrsfs}{rsfscript}
\definecolor{shadecolor}{gray}{0.90}
\def\bfit{\bfseries\itshape}
\newtheorem{theo}{Theorem}[section]
\newtheorem{prop}[theo]{Proposition}
\newtheorem{lem}[theo]{Lemma}
\newtheorem{coro}[theo]{Corollary}
\def\equat{\refstepcounter{theo}\begin{equation}}
\def\endequat{\end{equation}}
\def\CG{{\mathfrak C}}  \def\cG{{\mathfrak c}}
    \def\NM{{\mathbb{N}}}
    \def\RM{{\mathbb{R}}}
\def\SG{{\mathfrak S}}
    \def\ZM{{\mathbb{Z}}}
  \def\ab{{\mathbf a}}  
  \def\bb{{\mathbf b}}  
    \def\CC{{\mathcal{C}}}
    \def\DC{{\mathcal{D}}}
    \def\GC{{\mathcal{G}}}
    \def\HC{{\mathcal{H}}}
\def\Ib{{\mathbf I}}
    \def\MC{{\mathcal{M}}}
    \def\PC{{\mathcal{P}}}
    \def\SC{{\mathcal{S}}}
\def\Arm{{\mathrm{A}}}
\def\Jrm{{\mathrm{J}}}
\def\Zrm{{\mathrm{Z}}}
          \def\aov{{\overline{a}}}
          \def\cov{{\overline{c}}}
\def\a{\alpha}
\def\g{\gamma}
\def\G{\Gamma}
\def\d{\delta}
\def\ph{\varphi}
\def\o{\omega}
\DeclareMathOperator{\Ind}{{\mathrm{Ind}}}
\DeclareMathOperator{\Irr}{{\mathrm{Irr}}}
\DeclareMathOperator{\Res}{{\mathrm{Res}}}
\def\to{\rightarrow}
\def\DS{\displaystyle}
\def\SSS{\scriptscriptstyle}
\def\lexp#1#2{\kern\scriptspace\vphantom{#2}^{#1}\kern-\scriptspace#2}
\def\le{\hspace{0.1em}\mathop{\leqslant}\nolimits\hspace{0.1em}}
\mathchardef\inferieur="321E
\mathchardef\superieur="321F
\def\eqna{\begin{eqnarray*}}
\def\endeqna{\end{eqnarray*}}
\def\itemth#1{\item[${\mathrm{(#1)}}$]}
\long\def\@car#1#2\@nil{#1}
\long\def\@first#1#2{#1}
\long\def\@second#1#2{#2}
\long\def\ifempty#1{\expandafter\ifx\@car#1@\@nil @\@empty
  \expandafter\@first\else\expandafter\@second\fi}
\theoremstyle{remark}
\newtheorem{rema}[theo]{Remark}
\newtheorem{exemple}[theo]{Example}
\newtheorem{defi}[theo]{Definition}
\theoremstyle{plain}
\newtheorem{conjecture}[theo]{Conjecture}
\def\xyinj{\ar@{^{(}->}}
\def\xysur{\ar@{->>}}
\def\isomorphisme#1{[#1]}
\def\petitespace{\vphantom{$\DS{\frac{\DS{A^A}}{\DS{A_A}}}$}}
\def\hlinewd#1{%
\noalign{\ifnum0=`}\fi\hrule \@height #1 %
\futurelet\reserved@a\@xhline}
\newlength\epaisLigne
\newcommand{\longiso}{\stackrel{\sim}{\longrightarrow}}
\def\bruhatordre{\hspace{0.1em}\mathop{{\boldsymbol{\leqslant}}}\nolimits\hspace{0.1em}}
\def\bruhatordrestrict{\hspace{0.1em}{\boldsymbol{<}}\hspace{0.1em}}
\def\egal{{\SSS{=}}}
\def\prel{\leqslant_{L}}
\def\prer{\leqslant_{R}}
\def\prelr{\leqslant_{LR}}
\def\siml{\sim_{L}}
\def\simr{\sim_{R}}
\def\simlr{\sim_{LR}}
\DeclareMathOperator{\Cell}{{\mathrm{Cell}}}
\def\isomorphisme#1{[#1]}
\def\Tgras#1{{\boldsymbol{T}}_{\!\! #1}}
\def\springer#1{\begin{centerline}{\fcolorbox{black}{shadecolor}{~
    \begin{minipage}[t]{0.8\textwidth}{\vphantom{~}{\itshape #1}\vphantom{$A_{\DS{A_A}}$}}
            \end{minipage}~}}\end{centerline}\medskip}
\begin{document}

\baselineskip=16pt

\title{Conjugacy classes of involutions and Kazhdan--Lusztig cells}

\author{{\sc C\'edric Bonnaf\'e}}
\address{
Institut de Math\'ematiques et de Mod\'elisation de Montpellier 
(CNRS: UMR 5149), 
Universit\'e Montpellier 2,
Case Courrier 051,
Place Eug\`ene Bataillon,
34095 MONTPELLIER Cedex,
FRANCE} 

\makeatletter
\email{cedric.bonnafe@math.univ-montp2.fr}
\makeatother

\author{{\sc Meinolf Geck}}

\address{Institute of Mathematics, 
Aberdeen University, 
ABERDEEN AB24 3UE, 
SCOTLAND, UK}
\email{m.geck@abdn.ac.uk}



\date{\today}


\begin{abstract} 
According to an old result of Sch\"utzenberger, the involutions in 
a given two-sided cell of the symmetric group $\SG_n$ are all conjugate. 
In this paper, we study possible generalisations of this property to 
other types of Coxeter groups. We show that Sch\"utzenberger's result 
is a special case of a general result on ``smooth'' two-sided cells.
Furthermore, we consider Kottwitz' conjecture concerning the intersections 
of conjugacy classes of involutions with the left cells in a finite 
Coxeter group. Our methods lead to a proof of this conjecture for 
classical types; combined with previous work, this leaves type $E_8$ 
as the only remaining open case.
\end{abstract}

\maketitle

\pagestyle{myheadings}

\markboth{\sc C. Bonnaf\'e and M. Geck}{\sc Involutions and cells}

%

\def\marginparwidth{2.6cm}

Let $(W,S)$ be a Coxeter system, and let $\ph : S \to \Gamma_{>0}$ 
be a {\it weight function}, that is, a map with values in a totally
ordered abelian group $\Gamma$ such that $\ph(s)=\ph(t)$ whenever 
$s$ and $t$ are conjugate in $W$. Associated with this datum, G. Lusztig has 
defined~\cite{l} a partition of $W$ into left, right or two-sided cells.
(If $\ph$ is constant, then this was defined earlier by D. Kazhdan and 
G. Lusztig~\cite{KL}). There seems to be almost no connection between cells 
and conjugacy classes of elements of order greater than $2$. However,
several papers have investigated links between conjugacy classes of 
involutions and cells~\cite{kottwitz},~\cite{luvo},~\cite{luvo1},
\cite{marberg}.

To the best of our knowledge, the oldest result in this direction is the 
following. In the case where $W=\SG_n$, the two-sided cells are described 
by the Robinson-Schensted correspondence~\cite{KL}, \cite{Lu2}. It then 
follows from a result of M.-P. Sch\"utzenberger~\cite{sch} (see also 
C. Hohlweg~\cite{hohlweg}) that, if $W=\SG_n$, then all the (Duflo) 
involutions contained in the same two-sided cell are conjugate.  Of course, 
as it can be seen already in the Coxeter group of type $B_2$ (with $\ph$ 
constant), the same kind of result cannot be generalized as such. However, 
again if $W$ is of type $B_2$ but if we now take $\ph$ to be non-constant,
then again the same result holds. In this paper we shall investigate
possible generalizations of M.-P. Sch\"utzenberger's result. For any 
subset $X\subseteq W$, we denote by $\CC_2(X)$ the union of all conjugacy
classes of involutions in $W$ which have non-empty intersection with $X$.

\medskip

\begin{quotation}
{\normalsize
{\bfit Conjecture.} --- {\it If $C$ and $C'$ are two left cells contained in
the same two-sided cell, then $\CC_2(C)=\CC_2(C')$.}  }
\end{quotation}

\medskip

As it can already be checked in type $A_3$, the obvious generalization 
of this conjecture to elements of any order is false. In this paper, we 
investigate this conjecture whenever $W$ is finite. For simplification, 
all along this paper, we will say that {\it ``Lusztig's Conjectures P's 
hold''} if {\it ``Lusztig's Conjectures P1, P2,\dots, P15 
in~\cite[Chapter 14]{lusztig} hold''}). Our aim is to prove the following 
result.

\medskip

\noindent{\bfit Theorem.} ---
{\it {\bfit Assume that $W$ is finite.} 
Let $\CG$ be a two-sided cell of $W$ and let $C$ and $C'$ be two left cells 
contained in $\CG$. Then:
\begin{itemize}
\itemth{a} If $\ph$ is constant, then $\CC_2(C)=\CC_2(C')$.

\itemth{b} {\bfit Assume that Lusztig's Conjectures P's for $(W,S,\ph)$ hold}. 
If $\CG$ is a {\bfit smooth} two-sided cell, then all the involutions in 
$\CG$ are conjugate. In particular, $\CC_2(C)=\CC_2(C')$ is a single 
conjugacy class.

\itemth{c} {\bfit Assume that $\ph$ is constant and $W$ is of classical
type $B_n$ or $D_n$}. Then $|\CC \cap C|=|\CC \cap C'|$ where $\CC$
is any conjugacy class of involutions in $W$.
\end{itemize}}

\medskip
Here, a two-sided cell $\CG$ is called {\it smooth} if the family of 
irreducible characters associated with $\CG$ contains only one element. 
This definition is inspired by the theory of rational Cherednik algebras and 
Calogero--Moser cells (as developed in~\cite{note} or~\cite{cmcells}). 
Note that smooth two-sided cells actually occur quite often; for example, all the 
two-sided cells of $\SG_n$ are smooth~\cite{KL}, \cite{Lu2}, as well as all 
the two-sided cells when $W$ is of type $B_n$ and $\ph$ corresponds 
to the asymptotic case as in~\cite{bonnafe iancu}, \cite{bonnafe two}. 
See also Table~\ref{table:eq} (p.~\pageref{table:eq}) for more numerical 
data.

Part (a) of the theorem will be shown in Proposition~\ref{eqpa1}; for
part (b) see Corollary~\ref{coro:conjugues}. An essential ingredient in 
our proof is the fact that, if $\CC$ is a conjugacy class of involutions 
in $W$, then $\sum_{w \in \CC} T_w$ is central in the Hecke algebra $\HC$; 
see Section~\ref{sec1}. (Here, $(T_w)_{w \in W}$ is the standard basis of 
$\HC$, as explained below.) Whenever $\CC$ is a conjugacy class of 
reflections, this result is due to L. Iancu (unpublished). 

Part (c) of the above result would follow from results of Lusztig
\cite[Chap.~12]{LuB} and a general conjecture due to R.E. Kottwitz 
\cite{kottwitz} concerning the intersections of conjugacy classes of 
involutions with left cells. Here, we prove (c) directly by the methods 
developped in Section~\ref{section:eqpa} and then use this to actually show
that Kottwitz' conjecture holds for $W$ of classical type; see 
Sections~\ref{section:kottBn}, \ref{section:kottDn} and \cite{tkott}.

Finally, we point out that our conjecture also makes sense for arbitrary
Coxeter groups. It can be checked easily that it holds in the infinite 
dihedral case; it also follows from work of J. Guilhot~\cite{guilhot} that, 
if $W$ is affine and if $\CG_0$ is the lowest two-sided cell, then the above conjecture 
holds for left cells contained in $\CG_0$. 

\section{Hecke algebras, involutions, cells} \label{sec1}

Let $(W,S)$ be a {\it finite} Coxeter system, let $\ell : W \to \NM$ denote the 
length function, let $\G$ be a totally ordered abelian group and let $\ph : 
S \to \G_{>0}$ be a {\it weight function} that is, a map such that 
$\ph(s)=\ph(t)$ whenever $s$ and $t$ are conjugate in $W$. We denote by 
$A$ the group ring $\RM[\G]$, denoted exponentially: in other words, 
$A=\oplus_{\g \in \G} \RM v^\g$, with $v^\g v^{\g'} = v^{\g+\g'}$. If 
$a=\sum_{\g \in \G} a_\g v^\g \in A$, then we denote by $\deg(a)$ its 
degree, namely the maximal $\g \in \G$ such that $\a_\g \neq 0$ (note that 
$\deg(0) = -\infty$). 

We denote by $\HC=\HC(W,S,\ph)$ the Hecke algebra with parameter $\ph$. 
As a module, $\HC=\oplus_{w \in W} ~A~\! T_w$ and the multiplication is 
completely determined by the following two rules:
\[ \begin{cases}
T_w T_{w'} = T_{ww'} & \text{if $\ell(ww')=\ell(w)+\ell(w')$,}\\
T_s^2 = 1 + (v^{\ph(s)}-v^{-\ph(s)}) T_s & \text{if $s \in S$.}
\end{cases}\]
The Bruhat--Chevalley order on $W$ will be denoted by $\bruhatordre$. 

\begin{rema} \label{eq:multiplication-g}
If $\PC$ is an assertion, then we define $\d_\PC$ by $\d_\PC=1$ (resp. $0$) 
if $\PC$ is true (resp. false). For instance, $\d_{i\egal j}$ replaces the 
usual Kronecker symbol $\d_{i,j}$. With this notation, we have 
\[\renewcommand{\arraystretch}{1.4} \begin{array}{l} 
T_sT_w=T_{sw} + \d_{sw \bruhatordrestrict w} (v^{\ph(s)}-v^{-\ph(s)}) T_w,\\
T_wT_s=T_{ws} + \d_{ws \bruhatordrestrict w} (v^{\ph(s)}-v^{-\ph(s)}) T_w
\end{array}\]
for all $s \in S$ and $w \in W$. 
\end{rema}

\medskip

\begin{lem}\label{lem:invo-central}
Let $\CC$ be a union of conjugacy classes of involutions in $W$. Then 
\[ \Tgras{\CC}:=\sum_{w \in \CC} T_w \quad \mbox{is central in $\HC$}.\]
\end{lem}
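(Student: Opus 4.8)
The plan is to reduce the statement to a computation with the generators. Since $\HC$ is generated as an $A$-algebra by $\{T_s : s\in S\}$ (because $T_w=T_{s_1}\cdots T_{s_k}$ for any reduced expression $w=s_1\cdots s_k$, so the $T_s$ generate a subalgebra containing every basis element $T_w$), it suffices to show that $\Tgras{\CC}$ commutes with $T_s$ for every $s\in S$. Using the two multiplication formulas of Remark~\ref{eq:multiplication-g} and summing over $w\in\CC$, I would obtain
\[ T_s\Tgras{\CC}-\Tgras{\CC}T_s \;=\; \sum_{w\in\CC}\bigl(T_{sw}-T_{ws}\bigr)\;+\;(v^{\ph(s)}-v^{-\ph(s)})\sum_{w\in\CC}\bigl(\d_{sw\bruhatordrestrict w}-\d_{ws\bruhatordrestrict w}\bigr)T_w, \]
and the task becomes to show that both sums on the right-hand side vanish.

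First I would dispose of the second sum, which in fact vanishes term by term: each $w\in\CC$ is an involution, so $\ell(ws)=\ell\bigl((ws)^{-1}\bigr)=\ell(s^{-1}w^{-1})=\ell(sw)$, and hence $sw\bruhatordrestrict w$ if and only if $ws\bruhatordrestrict w$, i.e. $\d_{sw\bruhatordrestrict w}=\d_{ws\bruhatordrestrict w}$. For the first sum, the point is that left multiplication by $s$ and right multiplication by $s$ are bijections of $W$, hence injective on $\CC$, so that $\sum_{w\in\CC}T_{sw}=\sum_{u\in s\CC}T_u$ and $\sum_{w\in\CC}T_{ws}=\sum_{u\in\CC s}T_u$; and since $\CC$ is a union of conjugacy classes it satisfies $s\CC s^{-1}=\CC$, whence $s\CC=\CC s$ and the two sums coincide. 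Putting these together gives $T_s\Tgras{\CC}=\Tgras{\CC}T_s$ for all $s\in S$, and therefore $\Tgras{\CC}$ is central.

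I do not expect a real obstacle in this argument; its whole content lies in the two small symmetry observations — that $\ell(sw)=\ell(ws)$ when $w$ is an involution (which cancels the length-correction terms) and that the conjugation-invariance of $\CC$ forces the set equality $s\CC=\CC s$ (which matches the remaining terms). The only things to verify with a little care are the injectivity of $w\mapsto sw$ and $w\mapsto ws$ on $\CC$ (immediate, being restrictions of bijections of $W$) and the passage from ``$S$ generates $W$'' to ``$\{T_s\}$ generates $\HC$'', which is routine.
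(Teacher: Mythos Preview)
Your proof is correct and follows essentially the same approach as the paper: reduce to commutation with each $T_s$, expand both products via Remark~\ref{eq:multiplication-g}, and use that $s\CC=\CC s$ (conjugacy invariance) together with $\ell(sw)=\ell(ws)$ for involutions. The paper's write-up is more compressed but the argument is identical.
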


\begin{proof}
Since $(T_s)_{s \in S}$ generates the $A$-algebra $\HC$, it is sufficient 
to show that $T_s \Tgras{\CC} = \Tgras{\CC} T_s$ for all $s \in S$. But, 
by Remark~\ref{eq:multiplication-g}, we have 
\begin{align*}
T_s \Tgras{\CC} &= \sum_{w \in \CC} T_{sw} + 
\sum_{w \in \CC} \d_{sw \bruhatordrestrict w} (v^{\ph(s)}-v^{-\ph(s)}) T_w,\\
\Tgras{\CC} T_s &= \sum_{w \in \CC} T_{ws} + \sum_{w \in \CC} 
\d_{ws \bruhatordrestrict w} (v^{\ph(s)}-v^{-\ph(s)}) T_w.
\end{align*}
Now, as $\CC$ is a union of conjugacy classes, we have $s\CC=\CC s$. Moreover, 
as elements of $\CC$ are involutions, we have $sw \bruhatordrestrict w$ if 
and only if $ws \bruhatordrestrict w$ (for any $w \in \CC$). The result 
follows.
\end{proof}

If $\CC$ is a conjugacy class of reflections, the above result is stated
in \cite[Exp.~3.3.8]{geja}; in this case, it is due to L. Iancu 
(unpublished).

\medskip

\begin{rema} \label{main1a} Let $f \colon W \rightarrow \RM$ be any 
class function on $W$. Let $\CC$ be a union of conjugacy classes of
involutions in $W$. Then we also have that 
\[\Tgras{\CC}^f:=\sum_{w \in \CC} f(w)T_w\quad \mbox{is central in $\HC$}.\]
(Indeed, it is sufficient to prove this in the case where $\CC$ is a 
single conjugacy class in which case we have $\Tgras{\CC}^f=f(t)\Tgras{\CC}$
where $t \in \CC$ is fixed.) In particular, applying this to the sign 
character $\varepsilon$ of $W$, we obtain 
\[\Tgras{\CC}^\varepsilon=\sum_{w \in \CC} (-1)^{\ell(w)}T_w
\quad \mbox{is central in $\HC$}.\]
\end{rema}

\medskip

For any $a=\sum_{\g \in \RM} a_\g v^\g$, we set $\aov=\sum_{\g \in \G} a_\g 
v^{-\g}$. This can be extended to an antilinear automorphism $\HC 
\rightarrow \HC$, $h \mapsto \bar{h}$, by the formula 
\[\overline{\sum_{w \in W} a_w T_w}=\sum_{w \in W} \aov_w T_{w^{-1}}^{-1}.\]
We set $A_{<0}=\oplus_{\g < 0} ~A v^\g$ and $\HC_{<0}=\oplus_{w \in W} ~A_{<0} 
T_w$. By~\cite[Theorem~5.2(a)]{lusztig}, there exists a unique $A$-basis
$(c_w)_{w \in W}$ of $\HC$, called the {\it Kazhdan--Lusztig basis}, such 
that 
\[ \begin{cases} \cov_w=c_w,\\ c_w \equiv T_w \mod \HC_{< 0}.\end{cases}\]
We now define $\prel$ (resp $\prer$, resp.  $\prelr$) as the coarsest 
preorder such that, for all $w \in W$, $\oplus_{y \prel w} ~A c_y$ 
(resp.  $\oplus_{y \prer w} ~A c_y$, resp.  $\oplus_{y \prelr w}~ A c_y$) 
is a left (resp. right, resp. two-sided) ideal of $\HC$. We define $\siml$ 
(resp.  $\simr$, resp. $\simlr$) as the equivalence relation associated 
with $\prel$ (resp. $\prer$, resp.  $\prelr$): its equivalence classes are 
called the {\it left} (resp.  {\it right}, resp. {\it two-sided}) 
{\it cells}. We denote by $\Cell_L(W)$ (resp. $\Cell_R(W)$, resp. 
$\Cell_{LR}(W)$) the set of left (resp. right, resp. two-sided) cells of $W$. 

In order to define the corresponding {\it cell modules} it will be
convenient, as in the later chapters of \cite{lusztig}, to work with a 
slightly modified version of the basis $(c_w)_{w\in W}$. Let $h \mapsto 
h^\dagger$ denote the unique $A$-algebra automorphism of $\HC$ such that 
\[ T_s^\dagger=-T_s^{-1} \qquad \mbox{for all $s \in W$}.\]
(See \cite[3.5]{lusztig}). Then, clearly, $(c_w^\dagger)_{w \in W}$ also 
is an $A$-basis of $\HC$. 

\medskip
\begin{rema} \label{eq:cwdagger-tw} By \cite[Theorem~5.2(b)]{lusztig}, we
have 
\[ c_w \equiv T_w \mod\Bigl(\mathop{\oplus}_{y \bruhatordrestrict w} A_{<0} 
T_y\Bigr) \qquad \mbox{for all $w\in W$}.\]
Since $c_w=\cov_w$, we also have 
\[ c_w^\dagger \equiv (-1)^{\ell(w)}T_w \mod\Bigl(\mathop{\oplus}_{y 
\bruhatordrestrict w} A_{>0} T_y\Bigr) \qquad \mbox{for all $w \in W$}.\]
\end{rema}

\medskip
Now, for every left cell $C$, we can construct a left $\HC$-module $V_C$,
called a {\it left cell module}, as follows. For $x,y \in W$, let us write
\[ c_xc_y=\sum_{z \in W} h_{x,y,z} c_z \qquad \mbox{where} \qquad
h_{x,y,z} \in A.\]
Then, as an $A$-module, $V_C$ is free with a basis $\{e_x \mid x \in 
C\}$. The action of $\HC$ on $V_C$ is given by the formula 
(see \cite[21.1]{lusztig}):
\[ c_x^\dagger.e_y=\sum_{z \in C} h_{x,y,z} e_z\qquad \mbox{where $x \in W$
and $y \in C$}.\]
We can perform similar constructions for right and two-sided
ideals, giving rise to right $\HC$-modules and $(\HC,\HC)$-bimodules,
respectively.

Now, let $K$ denote the fraction field of $A$ and, if $M$ is an $A$-module, 
let $KM=K\otimes_A M$. Then it is well-known (see, for example, 
\cite[9.3.5]{gepf}) that the $K$-algebra $K\HC$ is split and semisimple 
so, by Tits' deformation Theorem, there is a bijection 
\[\begin{array}{ccc}
\Irr(W) & \longiso & \Irr(K\HC) \\
\chi & \longmapsto & \chi_\ph.
\end{array}\]
Here, $\chi$ can be retrieved from $\chi_\ph$ through the specialization 
$v^\g \mapsto 1$. 

\medskip
\begin{defi}[\protect{\cite{KL}}, \protect{\cite{l}}] \label{def:fam} 
We define a partition of $\Irr(W)$, depending on $\ph$, as follows. For 
a two-sided cell $\CG$, we denote by $\Irr_\CG(W)$ the set of 
irreducible characters $\chi$ of $W$ such that $\chi_\ph$ is an irreducible 
constituent of $KV_C$, where $C$ is a left cell contained in $\CG$.
Then:
\[\Irr(W)=\coprod_{\CG \in \Cell_{LR}(W)} \Irr_\CG(W).\]
Note that, for each two-sided cell $\CG$, we have
\[ |\CG|=\sum_{\chi \in \Irr_\CG(W)} \chi(1)^2.\]
\end{defi}

If $C$ is a left cell, we denote by $\isomorphisme{C}$ the character 
of $W$ obtained by specialization through $v^\g \mapsto 1$ from the 
character of $K\HC$ afforded by $V_C$. An indication of the connection
between left cells and involutions is given by the following result.

\medskip
\begin{prop}[\protect{\cite{geck invo}}] \label{myinv} Let $C$ be a left 
cell in $W$. Then the number of involutions in $C$ is equal to the number
of irreducible constituents of $[C]$ (counting multiplicities).
\end{prop}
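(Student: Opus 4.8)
The plan is to reduce the assertion to a Frobenius--Schur count, and then to evaluate that count cell by cell by means of Lusztig's asymptotic algebra.

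First I would recast the statement in character-theoretic terms. Every irreducible character of a finite Coxeter group is afforded by a representation realisable over $\RM$, so each $\chi\in\Irr(W)$ has Frobenius--Schur indicator $1$; hence, setting $\psi_0:=\sum_{\chi\in\Irr(W)}\chi$, the Frobenius--Schur formula gives $\psi_0(w)=|\{x\in W\mid x^2=w\}|$ for every $w\in W$. For any character $\rho$ of $W$ the number of its irreducible constituents counted with multiplicity equals $\sum_{\chi}\langle\rho,\chi\rangle_W=\langle\rho,\psi_0\rangle_W$. Taking $\rho=[C]$, the proposition is therefore equivalent to the identity
\[
|\{w\in C\mid w^2=1\}|\;=\;\langle[C],\psi_0\rangle_W\;=\;\frac{1}{|W|}\sum_{w\in W}[C](w^2).
\]

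To evaluate the right-hand side in a way that keeps track of the cell $C$, I would pass to Lusztig's asymptotic ring $J=\bigoplus_{w\in W}\ZM\,t_w$, which is available because $W$ is finite, so that the $\ab$-function is bounded. Recall that there is an $A$-algebra homomorphism $\HC\to A\otimes_{\ZM}J$ relating the bases $(c_w^{\dagger})_{w\in W}$ and $(t_w)_{w\in W}$ and becoming an isomorphism over $K$; through Tits' deformation theorem this identifies $\Irr(K\HC)$, hence $\Irr(W)$, with $\Irr(KJ)$, say $\chi\leftrightarrow E_\chi$. Assuming Lusztig's conjectures P1--P15 of \cite[Chapter~14]{lusztig} (which hold when $\ph$ is constant), the Duflo involution $d$ of $C$ yields an idempotent $t_d$, the cell module $V_C$ is recovered from the left $J$-module $Jt_d$, and so, writing $\CG$ for the two-sided cell containing $C$ and $\mathcal{E}_{\CG}:=\bigoplus_{\chi\in\Irr_\CG(W)}E_\chi$ for the multiplicity-free module over the block $\CM\otimes J_{\CG}$, one finds that the number of irreducible constituents of $[C]$ equals $\Tr(t_d\mid\mathcal{E}_{\CG})$: indeed the image of $t_d$ in each simple quotient $\End(E_\chi)$ is an idempotent of rank $\langle[C],\chi\rangle$.

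It then remains to prove that $\Tr(t_d\mid\mathcal{E}_{\CG})=|\{w\in C\mid w^2=1\}|$, and this is the step I expect to be the main obstacle. The idea would be to use Lusztig's description of $\CM\otimes J_{\CG}$ --- a direct sum of matrix algebras whose finer arithmetic is governed by the finite group $\GC_{\CG}$ attached to the family $\Irr_\CG(W)$ --- in order to locate the involutions of $\CG$ inside the basis $(t_w)_{w\in\CG}$: they should be precisely the ``diagonal'' basis elements of the blocks, so that a Frobenius--Schur count internal to $\GC_{\CG}$ matches everything block by block and the part lying in the single left cell $C$ accounts for exactly $\Tr(t_d\mid\mathcal{E}_{\CG})$. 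Carrying this out relies on the positivity and integrality in P1--P15 and on $\gamma_{d,d,d}=1$ for Duflo involutions. In classical types one can dispense with $J$ and argue directly from the combinatorial model of the cells --- the Robinson--Schensted correspondence for type $A$, domino tableaux for types $B_n$ and $D_n$ --- matching the involutions in $C$ with the symmetric part of that model; the exceptional types can then be checked by an explicit computation. Full details are given in \cite{geck invo}.
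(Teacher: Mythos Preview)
The paper itself does not prove this proposition; it is quoted from \cite{geck invo} without argument. So there is no in-paper proof to compare against, but your proposal can still be judged on whether it actually establishes the result.

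It does not. Your opening reduction is correct and is the natural starting point: since every $\chi\in\Irr(W)$ has Frobenius--Schur indicator $+1$, the number of constituents of $[C]$ (with multiplicity) equals $\langle[C],\psi_0\rangle_W=|W|^{-1}\sum_{x\in W}[C](x^2)$, and the proposition amounts to showing that this is $|\{w\in C:w^2=1\}|$. Your translation through $J$ to $\Tr(t_d\mid\mathcal{E}_\CG)$ is also reasonable under P1--P15. But the decisive step --- why this trace equals the number of involutions in $C$ --- is exactly what you flag as ``the main obstacle'', and you do not carry it out. The appeal to Lusztig's description of $J_\CG$ via the finite group $\GC_\CG$ is only a gesture: that description is itself a deep classification result (and not established for general unequal parameters), and even granting it you give no argument matching involutions in $\CG$ with the ``diagonal'' basis elements. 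The fallback to Robinson--Schensted/domino combinatorics plus computer checks for exceptional types is a separate case-by-case programme, not a proof of the statement as formulated. Finally, closing with ``Full details are given in \cite{geck invo}'' is circular: that \emph{is} the reference whose content you are meant to supply.

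For orientation, the argument in \cite{geck invo} is more direct than the route through $\GC_\CG$. It stays at the level of the leading coefficients $c_{w,\chi}$ and their cell-wise orthogonality (the formula recorded in this paper as Proposition~\ref{refine1}), combined with a self-duality of the cell module that on the basis $(e_w)_{w\in C}$ is realised by $e_w\mapsto e_{w^{-1}}$ (for $w\in C\cap C^{-1}$). The trace of that duality simultaneously counts the involutions in $C$ (as fixed basis vectors) and, by the module-level Frobenius--Schur computation, equals $\sum_\chi\langle[C],\chi\rangle_W$. No knowledge of $\GC_\CG$ and no type-by-type combinatorics are required.
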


We denote by $\GC_\ph(W)$ the 
following graph: its vertices are the irreducible characters of $W$ and 
two irreducible characters $\chi$ and $\chi'$ are joined by an edge if 
there exists a left cell $C$ such that $\chi$ and $\chi'$ are irreducible 
components of $\isomorphisme{C}$. In order to relate the graph
$\GC_\ph(W)$ to the partition of $\Irr(W)$ in Definition~\ref{def:fam}
we need the following result.

\medskip

\begin{prop}[\protect{\cite[Theorem~12.15]{LuB}} and 
\protect{\cite[Corollary~3.9]{geck invo}}] 
\label{lem:gamma-smooth} Let $C$ and $C'$ be two left cells. Then:
$\langle \isomorphisme{C},\isomorphisme{C'} \rangle_W = 
|C' \cap C^{-1}|$. 
\end{prop}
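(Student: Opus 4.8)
The plan is to realise both sides of the identity inside the Hecke algebra and then match dimensions.

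First I would rewrite the left-hand side. As recalled above, $K\HC$ is split semisimple, and the specialization $v^\g\mapsto 1$ turns $\chi_\ph$ into $\chi$; hence the multiplicity of $\chi$ in $\isomorphisme{C}$ equals the multiplicity $m_C(\chi)$ of the simple $K\HC$-module $E_\chi$ affording $\chi_\ph$ as a constituent of $KV_C$, and by orthonormality of irreducible characters
\[\langle\isomorphisme{C},\isomorphisme{C'}\rangle_W=\sum_{\chi\in\Irr(W)}m_C(\chi)\,m_{C'}(\chi).\]
If $C$ and $C'$ lie in distinct two-sided cells the statement is trivial: $C^{-1}$ is a right cell contained in the two-sided cell of $C$, so $C^{-1}\cap C'=\vide$, while the displayed sum vanishes because, by Definition~\ref{def:fam}, $\isomorphisme{C}$ and $\isomorphisme{C'}$ have no common irreducible constituent. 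So assume $C,C'\subseteq\CG$ for one two-sided cell $\CG$.

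Next I would introduce the two-sided cell bimodule. Let $\HC_\CG$ be the $(\HC,\HC)$-bimodule obtained from the two-sided ideal $\bigoplus_{y\prelr\CG}A\,c_y$ by dividing out the two-sided ideal $\bigoplus_{y\prelr\CG,\,y\notin\CG}A\,c_y$; the images $\bar c_w$ of $c_w$ ($w\in\CG$) form an $A$-basis of it. The argument rests on two facts. First, for a left cell $D\subseteq\CG$ the span $L_D$ of $\{\bar c_w\mid w\in D\}$ is a left $\HC$-submodule of $\HC_\CG$: $c_xc_w=\sum_z h_{x,w,z}c_z$ only involves $z\prel w$, and such $z$ lying in $\CG$ lie in $D$; moreover $L_D$ is, up to the algebra automorphism $h\mapsto h^\dagger$, the left cell module $V_D$, so that $m_{L_D}(\chi)=m_D(\varepsilon\chi)$. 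Symmetrically, for a right cell the span of the corresponding $\bar c_w$ is a right submodule, and applying this to $C^{-1}$ (a right cell in $\CG$, via $w\mapsto w^{-1}$) yields a right submodule $R$ with multiplicities $m_C(\varepsilon\chi)$. Second, over $K$ the bimodule $\HC_\CG$ is isomorphic to $\bigoplus_{\chi\in\Irr_\CG(W)}E_\chi\boxtimes E_\chi^{*}$ (here one uses that every irreducible character of a finite Coxeter group is real-valued). Under this isomorphism a left submodule has the shape $\bigoplus_\chi E_\chi\otimes U_\chi$ and a right submodule the shape $\bigoplus_\chi U'_\chi\otimes E_\chi^{*}$, with $U_\chi\subseteq E_\chi^{*}$ and $U'_\chi\subseteq E_\chi$; for $L_{C'}$ and $R$ one reads off $\dim U_\chi=m_{C'}(\varepsilon\chi)$ and $\dim U'_\chi=m_C(\varepsilon\chi)$.

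Finally I would compare. Since the $\bar c_w$ form a basis, $L_{C'}\cap R$ is spanned by the $\bar c_w$ with $w\in C'\cap C^{-1}$, hence has rank $|C'\cap C^{-1}|$. On the other hand, by the second fact $L_{C'}\cap R=\bigoplus_\chi(E_\chi\otimes U_\chi)\cap(U'_\chi\otimes E_\chi^{*})=\bigoplus_\chi U'_\chi\otimes U_\chi$, of dimension $\sum_\chi m_C(\varepsilon\chi)\,m_{C'}(\varepsilon\chi)=\sum_\psi m_C(\psi)\,m_{C'}(\psi)$ after re-indexing by $\psi=\varepsilon\chi$. Combining with the first step gives $|C'\cap C^{-1}|=\langle\isomorphisme{C},\isomorphisme{C'}\rangle_W$. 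The step carrying the real content is the first fact: to know that the $c_z$ occurring in $c_xc_w$ which lie in $\CG$ actually lie in the left cell of $w$ one needs that $z\prel w$ together with $z\simlr w$ forces $z\siml w$, i.e. the compatibility of the left and two-sided cell partitions, together with the ensuing identification of $L_D$ with $V_D$ up to the $\dagger$-twist. This holds unconditionally when $\ph$ is constant, and in general it is precisely what the cited results of Lusztig \cite[Theorem~12.15]{LuB} and Geck \cite[Corollary~3.9]{geck invo} supply; granted that, the dimension bookkeeping above is routine.
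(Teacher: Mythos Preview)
The paper does not supply its own proof here; it simply cites the two references. So the question is whether your argument stands on its own.

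Your bimodule bookkeeping is correct once the pieces are in place, but everything rests on one unproven assertion: that for a left cell $D\subseteq\CG$, the span $L_D=\bigoplus_{w\in D}A\,\bar c_w$ is a left $\HC$-submodule of $\HC_\CG$ (and symmetrically for right cells). As you correctly identify, this needs that $z\prel w$ together with $z\simlr w$ forces $z\siml w$. For a general weight function $\ph$ this is \emph{not} known unconditionally; it is essentially the combination of Lusztig's Conjectures P4 and P9, which the paper repeatedly flags as a standing hypothesis elsewhere (see, e.g., the proof of Corollary~\ref{coro:graphe}). Your final sentence then claims that this compatibility ``is precisely what the cited results supply''. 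That is where the argument becomes circular: \cite[Theorem~12.15]{LuB} and \cite[Corollary~3.9]{geck invo} are not lemmas about cell compatibility---they \emph{are} the identity $\langle[C],[C']\rangle_W=|C'\cap C^{-1}|$. You are invoking the proposition to justify a step in its own proof.

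The argument in \cite{geck invo} that works for arbitrary $\ph$ avoids the submodule question entirely. One fixes matrix realisations of each $\chi_\ph$ and considers the leading coefficients $c_{w,\chi}^{i,j}$ of the individual matrix entries (so that $c_{w,\chi}=\sum_i c_{w,\chi}^{i,i}$). The Schur relations for the split semisimple algebra $K\HC$ give unconditional orthogonality relations among these, and a refinement over left cells---the matrix-entry version of Proposition~\ref{refine1}---lets one read off $\langle[C],\chi\rangle_W$ as the number of indices $j$ for which the column $(c_{w,\chi}^{i,j})_{w\in C}$ is nonzero. The quantity $|C'\cap C^{-1}|$ then drops out of a double count of nonzero leading matrix coefficients. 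Nothing in this route asserts that $L_D$ sits as a submodule inside $\HC_\CG$, so no appeal to P4 or P9 is required.
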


\medskip

As two-sided cells are unions of left cells, the sets $\Irr_{\CG}(W)$
are unions of connected components of the graph $\GC_\ph(W)$. It is 
conjectured that the converse holds:

\medskip

\begin{coro}\label{coro:graphe}
{\bfit Assume that Lusztig's Conjectures P's for $(W,S,\ph)$ hold}. Then the 
sets $\Irr_{\CG}(W)$ are the connected components of the graph $\GC_\ph(W)$. 
\end{coro}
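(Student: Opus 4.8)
The plan is to fix a two-sided cell $\CG$ and prove that the full subgraph of $\GC_\ph(W)$ spanned by the vertex set $\Irr_\CG(W)$ is connected. Together with the remark made just before the statement --- that each $\Irr_\CG(W)$ is already a union of connected components of $\GC_\ph(W)$ --- this gives the Corollary. The idea is to translate this into a purely combinatorial assertion about cells, via Proposition~\ref{lem:gamma-smooth}.

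Two elementary observations reduce the task. First, by Definition~\ref{def:fam}, every $\chi\in\Irr_\CG(W)$ is an irreducible constituent of $\isomorphisme{C}$ for some left cell $C\subseteq\CG$; and conversely, if $\chi,\chi'\in\Irr_\CG(W)$ both occur in $\isomorphisme{C}$ then necessarily $C\subseteq\CG$, because all constituents of $\isomorphisme{C}$ lie in a single part of the partition of $\Irr(W)$. Second, by the very definition of $\GC_\ph(W)$, the set of constituents of a fixed $\isomorphisme{C}$ is a clique. Hence $\Irr_\CG(W)$ is connected in $\GC_\ph(W)$ as soon as the following graph $\Delta$ is connected: its vertices are the left cells contained in $\CG$, and two such cells $C,C'$ are joined whenever $\isomorphisme{C}$ and $\isomorphisme{C'}$ have a common irreducible constituent. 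Indeed, a path $C=C_0,C_1,\dots,C_k=C'$ in $\Delta$, together with a choice of common constituent $\psi_i$ of $\isomorphisme{C_i}$ and $\isomorphisme{C_{i+1}}$, links --- through the cliques above --- any prescribed constituent of $\isomorphisme{C_0}$ to any prescribed constituent of $\isomorphisme{C_k}$ inside $\GC_\ph(W)$, and every vertex of $\Irr_\CG(W)$ is such a constituent.

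Now I would use Proposition~\ref{lem:gamma-smooth}: $\isomorphisme{C}$ and $\isomorphisme{C'}$ share a constituent if and only if $\langle\isomorphisme{C},\isomorphisme{C'}\rangle_W=|C'\cap C^{-1}|$ is nonzero, i.e.\ if and only if the right cell $C^{-1}$ meets the left cell $C'$ (the map $x\mapsto x^{-1}$ interchanging left and right cells). This suggests passing to the bipartite graph $B$ whose vertices are the left cells contained in $\CG$ together with the right cells contained in $\CG$, with one edge for each $w\in\CG$, joining the left cell containing $w$ to the right cell containing $w$. The graph $B$ is connected: if $B=B_1\sqcup B_2$ were a decomposition with no crossing edge, the union $Y$ of the left cells appearing in $B_1$ would coincide with the union of the right cells appearing in $B_1$ (trace each $w$ along its edge), so $Y$ would be stable under $\siml$ and under $\simr$, hence under $\simlr$; since $\varnothing\neq Y\subseteq\CG$ and $\CG$ is a single two-sided cell, $Y=\CG$, forcing $B_2=\varnothing$ (there are no isolated vertices, each cell being nonempty). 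It then remains to transfer connectedness from $B$ to $\Delta$: given left cells $C,C'\subseteq\CG$, take a path in $B$ from $C$ to $C'$; it alternates left and right cells, so it suffices to handle a sub-path $L\to\rho\to L'$ with $\rho$ a right cell. Its two edges come from elements $u,u'\in\CG$ both lying in $\rho$, whence $u\simr u'$, so $u^{-1}\siml (u')^{-1}$, and $u^{-1},(u')^{-1}$ lie in one left cell $L_*$; then $u^{-1}\in L_*\cap L^{-1}$ and $(u')^{-1}\in L_*\cap (L')^{-1}$, so $L_*$ is joined to both $L$ and $L'$ in $\Delta$. Chaining these sub-paths shows $C$ and $C'$ lie in the same component of $\Delta$, and the proof is complete.

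The only genuinely non-formal ingredient --- hence the main obstacle --- is the translation via Proposition~\ref{lem:gamma-smooth}: that sharing an irreducible constituent between two left cell modules is equivalent to the corresponding right and left cells having a common element. It is through that proposition (and the well-definedness of the partition in Definition~\ref{def:fam}) that Lusztig's Conjectures P's are used for general $\ph$; everything else is bookkeeping with cells, the relation $\simlr$ being the join of $\siml$ and $\simr$, and the inversion anti-automorphism. The one point that requires care in the last paragraph is the careful tracking of left versus right cells under $x\mapsto x^{-1}$.
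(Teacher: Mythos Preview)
Your argument is correct and follows essentially the same route as the paper's: translate ``sharing an irreducible constituent'' into ``nonempty intersection of a left cell with an inverse left cell'' via Proposition~\ref{lem:gamma-smooth}, and then use that $\simlr$ is generated by $\siml$ and $\simr$ to obtain connectedness. The paper compresses all of this into one line; you have carefully spelled out the combinatorics via the bipartite graph $B$ and the transfer to $\Delta$, which is fine.

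One point of attribution is off, however. You write that Lusztig's Conjectures P's enter only through Proposition~\ref{lem:gamma-smooth} and the partition in Definition~\ref{def:fam}, while ``the relation $\simlr$ being the join of $\siml$ and $\simr$'' is relegated to bookkeeping. In fact this last assertion is precisely (a consequence of) Conjecture~P9, and it is the essential place where the P's hypothesis is invoked --- the paper's proof cites exactly this. Proposition~\ref{lem:gamma-smooth}, by contrast, is stated in the paper without the P's assumption. Without P9 your connectedness argument for $B$ breaks down: a subset of $\CG$ that is simultaneously a union of left cells and a union of right cells need not, a priori, be a union of two-sided cells. So the mathematics is right, but you should relocate the credit for the hypothesis.
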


\begin{proof}
Indeed, if Lusztig's Conjectures P's for $(W,S,\ph)$ hold, then $\simlr$ 
is the equivalence relation generated by $\siml$ and $\simr$; 
see~\cite[\S{14.2},~Conjecture~P9]{lusztig}. So the result follows from 
Proposition~\ref{lem:gamma-smooth}. 
\end{proof}

\medskip

We shall also need the following result whose proof relies on some
case--by--case arguments and explicit computations.

\medskip

\begin{prop}[\protect{\cite[Chap.~22]{lusztig}}] \label{mult1} 
{\bfit Assume that Lusztig's Conjectures P's for $(W,S,\ph)$ hold}. 
Let $\chi \in \Irr(W)$. Then there exists a left cell $C$ of $W$
such that $\langle [C],\chi\rangle_W=1$.
\end{prop}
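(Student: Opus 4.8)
The plan is to reduce everything to a statement internal to a single Lusztig family, and then to invoke the known classification of constructible representations. First, I would note that by Definition~\ref{def:fam} we have $\Irr(W)=\coprod_\CG\Irr_\CG(W)$, so each $\chi$ belongs to a unique set $\Irr_{\CG_\chi}(W)$; moreover, whenever a left cell $C$ is contained in a two-sided cell $\CG$, all irreducible constituents of $[C]$ lie in $\Irr_\CG(W)$ (the set $\Irr_\CG(W)$ being independent of the choice of $C\subseteq\CG$). Hence $\langle[C],\chi\rangle_W=0$ unless $C\subseteq\CG_\chi$, and the proposition becomes equivalent to the assertion: for every two-sided cell $\CG$ and every $\chi\in\Irr_\CG(W)$, there is a left cell $C\subseteq\CG$ with $\langle[C],\chi\rangle_W=1$.

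Next I would use that, under Lusztig's Conjectures P's, the left cell representations $[C]$ are precisely Lusztig's \emph{constructible representations}, and that the constructible representations occurring inside a fixed family $\Irr_\CG(W)$ admit a completely explicit description in terms of the finite group $\GC_\CG$ attached to the family and the associated (possibly non-abelian) Fourier transform --- a description carried out, case by case over the types of $(W,S,\ph)$, in \cite[Chap.~22]{lusztig}. Concretely: for $W=\SG_n$ every family is a singleton $\{\chi\}$, there is a unique constructible representation in it, namely $\chi$ itself, and the claim is trivial. For the classical types $B_n$ and $D_n$ (with equal parameters, or in type $B$ also with unequal parameters), the groups $\GC_\CG$ are elementary abelian $2$-groups, the family $\Irr_\CG(W)$ is indexed by symbols, and the constructible representations are given by an explicit combinatorial rule; inspecting that rule shows that each member of the family occurs with multiplicity exactly one in at least one constructible representation. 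For the dihedral groups and the exceptional types, $\GC_\CG$ is one of $1,\SG_2,\SG_3,\SG_4,\SG_5$, and the required verification is a finite computation, recorded in the tables of \cite[Chap.~22]{lusztig}: for each $\chi$ one simply reads off a constructible representation containing it with coefficient~$1$.

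A partially uniform route to the \emph{existence} part would be to produce a constructible representation containing $\chi$ by truncated $j$-induction of a sign character from a suitable parabolic subgroup $W_J$, exploiting the behaviour of the $\aG$-invariant under $j$-induction to control the multiplicities; but showing that the multiplicity is \emph{exactly} $1$ in the harder cases forces one back onto the explicit data. Accordingly, the honest proof is the case-by-case one, and the only real content --- the main obstacle --- is the multiplicity-one check for the families whose attached group $\GC_\CG$ is one of the non-abelian groups $\SG_3$, $\SG_4$, $\SG_5$ (occurring in exceptional types) and for the ``exceptional'' families arising for unequal parameters in type $B_n$; all of these are settled in \cite[Chap.~22]{lusztig}, which is why we quote the proposition from there rather than reproving it here.
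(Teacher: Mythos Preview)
Your proposal is correct and follows essentially the same approach as the paper: the paper's proof also invokes the explicit case-by-case results of \cite[\S 22]{lusztig} (supplemented by \cite[\S 7]{geck plus} for the non-crystallographic types) showing that every $\chi$ occurs with multiplicity~$1$ in some constructible character, and then uses \cite[Lemma~22.2]{lusztig} (which requires the P's) to conclude that every constructible character equals some $[C]$. Your preliminary reduction to a single family is correct but not needed, and only the direction ``constructible $\Rightarrow$ left cell character'' is actually used.
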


\begin{proof} By the explicit results in \cite[\S 22]{lusztig} (see also
\cite[\S 7]{geck plus} and the references there for the non-crystallographic
types), every $\chi \in \Irr(W)$ appears with multiplicity $1$ in some 
``contructible'' character, as defined in \cite[22.1]{lusztig}. (For Weyl
groups and the equal parameter case, this statement already appeared in 
\cite[5.30]{LuB}.) On the other hand, since Lusztig's Conjectures P's 
for $(W,S,\ph)$ are assumed to hold, we can apply \cite[Lemma~22.2]{lusztig}
which shows that every constructible character is of the form $[C]$ for
some left cell $C$. 
\end{proof}

\section{Leading coefficients} \label{sec:lc}

Lusztig has associated with any $\chi \in \Irr(W)$ two invariants 
$\ab_\chi\in \Gamma_{\geq 0}$ and $f_\chi\in \RM_{>0}$; see 
\cite[Chap.~4]{LuB}, \cite{lulc}, \cite[\S{20}]{lusztig}. Let us briefly
recall how this is done. It is known that $\chi_\varphi(T_w) \in A$ for 
all $w \in W$; see \cite[9.3.5]{gepf}. Thus, we can define
\[ \ab_\chi:=\min\{\gamma\in \G_{\geq 0} \mid v^\gamma\, \chi_\varphi(T_w)
\in A_{\geq 0} \mbox{ for all $w \in W$}\}.\]
Consequently, there are unique numbers $c_{w,\chi} \in \RM$
($w \in W$) such that
\[v^{\ab_\chi}\,\chi_\varphi(T_w) \equiv (-1)^{\ell(w)}\,c_{w,\chi} \bmod
A_{>0}.\]
These numbers are Lusztig's ``leading coefficients of character values'';
see \cite{LuB}, \cite{lulc}. Since $\chi_\varphi(T_w)= 
\chi_\varphi(T_{w^{-1}})$ for all $w \in W$ (see 
\cite[8.2.6]{gepf}), we certainly have
\[ c_{w,\chi}=c_{w^{-1},\chi} \qquad \mbox{for all $w \in W$}.\]
Given $\chi$, there is at least one $w \in W$ such that $c_{w,\chi}\neq 0$
(by the definition of $\ab_\chi$). Hence, the sum of all $c_{w,\chi}^2$
($w\in W$) will be strictly positive and so we can write that sum as
$f_\chi\,\chi(1)$ where $f_\chi\in \RM$ is strictly positive. We have 
the following orthogonality relations (see \cite[Exc.~9.8]{gepf}):
\[ \sum_{w \in W} c_{w,\chi}\, c_{w,\chi'}= \left\{\begin{array}{cl}
f_\chi \chi(1) & \quad \mbox{if $\chi=\chi'$},\\ 0 & \quad \mbox{otherwise}.
\end{array}\right.\]
The coefficients $c_{w,\chi}$ and the numbers $f_\chi$ are
related to the left and two-sided cells of $W$. We shall now state a
few results which make this relation more precise.

\medskip

\begin{prop}[\protect{\cite[5.8]{LuB}} and 
\protect{\cite[3.8]{geck invo}}] \label{refine1} 
Let $C$ be a left cell and $\chi,\chi' \in \Irr(W)$. Then
\[ \sum_{w \in C} c_{w,\chi}\,c_{w,\chi'}=\left\{\begin{array}{cl} f_\chi\,
\langle [C],\chi\rangle_W & \quad \mbox{if $\chi=\chi'$},\\ 0 & \quad
\mbox{otherwise}.\end{array} \right.\]
\end{prop}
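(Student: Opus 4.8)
The asserted formula is the refinement, to a single left cell $C$, of the global orthogonality relations $\sum_{w\in W}c_{w,\chi}\,c_{w,\chi'}=\delta_{\chi,\chi'}\,f_\chi\,\chi(1)$ recalled above, so the plan is to ``localise'' the usual Schur-orthogonality argument at $C$. Let $\CG$ be the two-sided cell containing $C$ and put $\ab=\ab_\CG$, so that $\ab_\chi=\ab$ for every $\chi\in\Irr_\CG(W)$. I would first pass from the Hecke algebra to Lusztig's asymptotic ring $J=\bigoplus_{w\in W}\ZM\,t_w$ and its two-sided-cell piece $J_\CG=\bigoplus_{w\in\CG}\ZM\,t_w$: the leading coefficients $c_{w,\chi}$, for $w\in\CG$ and $\chi\in\Irr_\CG(W)$, are---up to the normalising scalars $f_\chi$---the matrix coefficients of an isomorphism $\RM J_\CG\;\longiso\;\bigoplus_{\chi\in\Irr_\CG(W)}\Mat_{\chi(1)}(\RM)$. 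Under this isomorphism the idempotent $t_{d_C}$ attached to the Duflo involution $d_C\in C$ has, in the $\chi$-block, rank equal to $\langle[C],\chi\rangle_W$ (by Definition~\ref{def:fam} and Proposition~\ref{lem:gamma-smooth}), and $\{t_w\mid w\in C\}$ is a basis of the left cell module $\RM J_\CG\,t_{d_C}$. Granting this dictionary, the identity to be proved is exactly the Schur orthogonality relation for the semisimple algebra $\bigoplus_\chi\Mat_{\chi(1)}(\RM)$ written out on the module $\RM J_\CG\,t_{d_C}$, with $f_\chi$ in the role of the Schur element.

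\textbf{Key steps.} \emph{(1)} Identify $c_{w,\chi}$ (for $w\in C$) with the relevant matrix coefficient of the left cell module $V_C$. Writing $c_x^\dagger.e_y=\sum_{z\in C}h_{x,y,z}\,e_z$, I would combine $c_w^\dagger\equiv(-1)^{\ell(w)}T_w\bmod\bigl(\oplus_{y\bruhatordrestrict w}A_{>0}T_y\bigr)$ from Remark~\ref{eq:cwdagger-tw} with the degree bound $\deg h_{x,y,z}\le\ab$ for $x,y,z\in\CG$ (top coefficient the structure constant $\gamma_{x,y,z}$ of $J$) to show that the coefficient of $v^{\ab}$ in the $\chi$-isotypic block of the action of $c_w^\dagger$ on $KV_C$---which by Definition~\ref{def:fam} contains $\langle[C],\chi\rangle_W$ copies of the simple module with character $\chi_\ph$---recovers $c_{w,\chi}$ up to sign and the scalar $f_\chi$; here one uses the defining congruence $v^{\ab_\chi}\chi_\ph(T_w)\equiv(-1)^{\ell(w)}c_{w,\chi}\bmod A_{>0}$ and the bijection $\Irr(W)\longiso\Irr(K\HC)$. \emph{(2)} Carry out the Schur-orthogonality computation on $\End_K(KV_C)$ restricted to the image of $\RM J_\CG\,t_{d_C}$: pairing the matrix entries of the $t_w$ ($w\in C$) against a dual basis and extracting leading terms via step~(1) produces $\sum_{w\in C}c_{w,\chi}\,c_{w,\chi'}$ on one side, and on the other side $f_\chi\,\langle[C],\chi\rangle_W$ when $\chi=\chi'$ and $0$ otherwise---the multiplicity read off from Proposition~\ref{lem:gamma-smooth}, the constant $f_\chi$ fixed by $\sum_{w\in W}c_{w,\chi}^2=f_\chi\chi(1)$. \emph{(3)} Finally, note that only traces of products of matrix coefficients enter, so the outcome is independent of the bases chosen to realise the matrix units.

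\textbf{Main obstacle.} The hard part will be step~(1): setting up the matrix realisation of the leading coefficients and justifying the degree estimates. This rests on Lusztig's structural results---constancy of the $\ab$-function on two-sided cells, the equality $\ab_\chi=\ab_\CG$ for $\chi\in\Irr_\CG(W)$, the basic properties of the constants $\gamma_{x,y,z}$ and of the Duflo involutions, and the fact that $\{t_w\mid w\in C\}$ spans the left $\RM J_\CG$-module $\RM J_\CG\,t_{d_C}$ with the stated block ranks. For crystallographic $W$ with $\ph$ constant these are available from \cite{LuB} and Lusztig's work on cells in affine Weyl groups; for a general weight function $\ph$ they are supplied by the results of \cite{geck invo}. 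Once this input is in place, steps~(2) and~(3) are a purely formal application of Schur orthogonality in a split semisimple $\RM$-algebra, and separating the cases $\chi=\chi'$ and $\chi\neq\chi'$ gives the proposition.
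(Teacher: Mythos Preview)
The paper does not supply its own proof here; it simply cites \cite[5.8]{LuB} and \cite[3.8]{geck invo}. So the question is whether your outline reproduces the argument in those references and, more importantly, whether it proves the proposition as it is stated in this paper.

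There is a genuine gap. Your route goes through Lusztig's asymptotic ring $J$: you invoke the degree bound $\deg h_{x,y,z}\le \ab$ on a two-sided cell, the structure constants $\gamma_{x,y,z}$, the Duflo involution $d_C$ with its idempotent $t_{d_C}$, and the block decomposition of $\RM J_\CG$. All of this is exactly the content of Lusztig's Conjectures P1--P15 in the unequal parameter setting. But look at where Proposition~\ref{refine1} sits in the paper: it is stated \emph{before} any assumption on the P's, whereas the very next results (Definition~\ref{distinv}, Proposition~\ref{refine2}) explicitly add the hypothesis ``Assume that Lusztig's Conjectures P's hold'', and Section~\ref{sec:invspec} opens with that hypothesis as a standing assumption. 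The proposition is needed in its unconditional form; your argument proves it only under P1--P15.

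Your final paragraph tries to close this gap by asserting that \cite{geck invo} supplies the $J$-ring input for general $\ph$. It does not. What \cite[3.8]{geck invo} actually does is bypass $J$ entirely: one chooses, for each $\chi$, a \emph{balanced} matrix representation of $K\HC$ affording $\chi_\ph$, extracts the leading terms of the individual matrix entries (``leading matrix coefficients''), and obtains the cellwise orthogonality directly from the Schur relations in the split semisimple algebra $K\HC$ together with the compatibility of the cell filtration with the $T$-basis. No $\ab$-function constancy on cells, no $\gamma$'s, no Duflo involutions are required. That is the argument the citation points to; your $J$-ring approach is a correct proof of a weaker statement (the proposition under P's), not of the proposition as written.
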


\medskip

%

\begin{coro} \label{refine0} Let $\chi \in \Irr(W)$ and $w \in W$. 
If $c_{w,\chi}\neq 0$ then $\langle [C],\chi\rangle_W \neq 0$ where 
$C$ is the left cell containing~$w$. In particular, $\chi \in \Irr_{\CG}(W)$
where $\CG$ is the two-sided cell such that $w \in \CG$.
\end{coro}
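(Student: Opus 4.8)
The plan is to deduce this directly from Proposition~\ref{refine1} together with the positivity $f_\chi>0$ recorded in Section~\ref{sec:lc}. First I would let $C$ be the left cell containing $w$ and apply Proposition~\ref{refine1} with $\chi'=\chi$, which gives
\[ \sum_{x \in C} c_{x,\chi}^2 = f_\chi\,\langle [C],\chi\rangle_W. \]
Since $w \in C$ and, by hypothesis, $c_{w,\chi}\neq 0$, the left-hand side is a sum of squares of real numbers at least one of which is non-zero, hence it is strictly positive. As $f_\chi \in \RM_{>0}$, this forces $\langle [C],\chi\rangle_W>0$; in particular $\langle [C],\chi\rangle_W \neq 0$, which is the first assertion.

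For the ``in particular'' part, recall that $[C]$ is by definition the character of $W$ obtained by specialising $v^\g\mapsto 1$ in the character of $K\HC$ afforded by $V_C$, and that this specialisation is compatible with the Tits deformation bijection $\chi\leftrightarrow\chi_\ph$ fixed before Definition~\ref{def:fam}. Therefore $\langle [C],\chi\rangle_W \neq 0$ is equivalent to saying that $\chi_\ph$ is an irreducible constituent of $KV_C$. Now let $\CG$ be the two-sided cell with $C\subseteq\CG$; then $w\in C\subseteq\CG$, and Definition~\ref{def:fam} gives exactly $\chi\in\Irr_\CG(W)$.

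There is essentially no obstacle: the statement is a formal consequence of the refinement of the orthogonality relations over a single left cell. The only point needing a word of care is the translation between ``$\chi$ is a constituent of the specialised character $[C]$'' and ``$\chi_\ph$ is a constituent of $KV_C$'', which is precisely the content of Tits' deformation theorem as used in Definition~\ref{def:fam}.
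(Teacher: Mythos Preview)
Your proof is correct and follows exactly the same approach as the paper: apply Proposition~\ref{refine1} with $\chi'=\chi$, note that the resulting sum of squares over $C$ is nonzero because $c_{w,\chi}\neq 0$, and use $f_\chi>0$ to conclude. Your write-up is simply more explicit about the positivity and about the ``in particular'' clause, but there is no substantive difference.
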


\begin{proof} If $c_{w,\chi}\neq 0$ and $w \in C$, then the left hand
side of the formula in Proposition~\ref{refine1} (where $\chi'=\chi$) 
is non-zero. Hence, so is the right hand side, that is, $\langle [C],
\chi\rangle_W\neq 0$. 
\end{proof}

\medskip

\begin{exemple} \label{lem:useful} Let $W'\subseteq W$ be a standard
parabolic subgroup, $\varepsilon'$ the sign character of $W'$ and 
$w_0'\in W'$ the longest element in $W'$. Let $\chi \in \Irr(W)$ 
be such that 
\[ \ab_\chi=\ph(w_0') \qquad \mbox{and} \qquad \big\langle
\Ind_{W'}^W(\varepsilon'),\chi\big\rangle_W\neq 0.\]
Then $\chi \in \Irr_{\CG}(W)$ where $\CG$ is the two-sided cell
which contains $w_0'$. (Indeed, by \cite[Cor.~2.8.6]{geja}, we have 
\[ c_{w_0',\chi}=\pm \big\langle \Ind_{W'}^W(\varepsilon'),\chi
\big\rangle_W\neq 0.\]
and it remains to use Corollary~\ref{refine0}.)
\end{exemple}

\medskip

\begin{defi} \label{distinv}
We define the set of ``{\it distinguished elements}'' in $W$ by 
\[ \DC:=\{ w\in W \mid n_w \neq 0\} \qquad \mbox{where} \qquad 
n_w:=\sum_{\chi \in \Irr(W)} f_\chi^{-1}\, c_{w,\chi}.\]
(Note that $\DC$ depends on $\ph$.) If Lusztig's Conjectures P's for
$(W,S,\ph)$ hold, then \cite[Lemma~3.7]{myp115} shows that this definition 
coincides with that in \cite[14.1]{lusztig}. In particular, by Conjectures
P5 and P6, we have $n_d=\pm 1$ and $d^2=1$ for all $d \in \DC$; 
furthermore, by P13, every left cell contains a unique element of $\DC$. 
\end{defi}

\medskip

\begin{prop}\label{refine2} 
{\bfit Assume that Lusztig's Conjectures P's for $(W,S,\ph)$ hold}. 
Let $C$ be a left cell and $\DC \cap C=\{d\}$. Then 
\[ c_{d,\chi}= n_d\langle [C],\chi\rangle_W \qquad \mbox{and}\qquad  
\sum_{\chi \in \Irr(W)} f_\chi^{-1}\langle [C],\chi\rangle_W=1.\]
\end{prop}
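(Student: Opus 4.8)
I would prove both identities at once by computing the sum $\sum_{w\in C} n_w\,c_{w,\chi}$ in two different ways: the first using Definition~\ref{distinv} together with Proposition~\ref{refine1}, the second using the hypothesis $\DC\cap C=\{d\}$. In fact this makes for quite a short argument, so the ``plan'' is essentially the proof.

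For the first computation, I would substitute $n_w=\sum_{\psi\in\Irr(W)} f_\psi^{-1}c_{w,\psi}$ (Definition~\ref{distinv}) and interchange the two finite sums:
\[ \sum_{w\in C} n_w\,c_{w,\chi}
   =\sum_{\psi\in\Irr(W)} f_\psi^{-1}\Bigl(\sum_{w\in C} c_{w,\psi}\,c_{w,\chi}\Bigr).\]
By Proposition~\ref{refine1}, the inner bracket vanishes unless $\psi=\chi$, in which case it equals $f_\chi\,\langle[C],\chi\rangle_W$; hence the right-hand side collapses to $\langle[C],\chi\rangle_W$. For the second computation, I note that since Lusztig's Conjectures~P's are assumed to hold, Definition~\ref{distinv} tells us that every left cell contains a unique element of $\DC$ and that $n_d=\pm1$; as by hypothesis $\DC\cap C=\{d\}$, this means $n_w=0$ for every $w\in C$ distinct from $d$, so $\sum_{w\in C} n_w\,c_{w,\chi}=n_d\,c_{d,\chi}$. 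Comparing the two values gives $n_d\,c_{d,\chi}=\langle[C],\chi\rangle_W$, and multiplying through by $n_d$ and using $n_d^2=1$ yields the first formula $c_{d,\chi}=n_d\,\langle[C],\chi\rangle_W$.

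The second formula then follows by feeding this back into the definition of $n_d$:
\[ \sum_{\chi\in\Irr(W)} f_\chi^{-1}\,\langle[C],\chi\rangle_W
   =\sum_{\chi\in\Irr(W)} f_\chi^{-1}\,n_d\,c_{d,\chi}
   =n_d\sum_{\chi\in\Irr(W)} f_\chi^{-1}\,c_{d,\chi}
   =n_d^2=1.\]
There is no genuine obstacle here; the only point to get right is recognising that Proposition~\ref{refine1} is exactly the orthogonality relation that makes the double sum telescope to $\langle[C],\chi\rangle_W$, and that the facts ``each left cell meets $\DC$ in a single point'' and ``$n_d=\pm1$'' are already part of the consequences of Lusztig's Conjectures~P's recorded in Definition~\ref{distinv}, so no case-by-case analysis is needed.
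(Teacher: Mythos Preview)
Your argument is correct. Both identities are established cleanly and the logic is sound: Proposition~\ref{refine1} gives exactly the orthogonality needed to collapse the double sum, and the facts $n_w=0$ for $w\in C\setminus\{d\}$ and $n_d^2=1$ are indeed recorded in Definition~\ref{distinv} as consequences of Lusztig's Conjectures~P's.

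The route differs from the paper's, however. The paper does not actually prove the first identity but simply cites \cite[20.6, 21.4]{lusztig}; only the second identity is derived in the paper, and that by the same substitution into the formula for $n_d$ that you use. Your argument is therefore more self-contained within the framework of the paper: you deduce the first identity from Proposition~\ref{refine1} (itself quoted from \cite{LuB}, \cite{geck invo}) together with the defining property of $\DC$, rather than appealing to a separate reference. This is a genuine, if modest, gain in transparency, since the reader sees directly how the leading-coefficient orthogonality relations force the value of $c_{d,\chi}$.
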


\begin{proof} The first identity is contained in \cite[20.6, 
21.4]{lusztig}. Then the second identity immediately follows 
from the above formula for $n_d$.
\end{proof}

\medskip

\begin{defi}\label{defi:smooth}
A two-sided cell $\CG$ is said to be {\bfit smooth} if $|\Irr_\CG(W)|=1$. The set 
of smooth two-sided cells will be denoted by $\Cell_{LR}^{\mathrm{smooth}}(W)$. 
\end{defi}

The next result gives a characterization of smooth two-sided cells whenever 
Lusztig's Conjectures P's hold:

\medskip

\begin{lem}\label{lem:smooth}
{\bfit Assume that Lusztig's Conjectures P's for $(W,S,\ph)$ hold}.
Let $\CG$ be a two-sided cell. We denote $\CG_{(2)}=\{w \in \CG 
\mid w^2=1\}$. Then the following are equivalent:
\begin{itemize}
\itemth{1} $\CG$ is ``smooth'', that is, $|\Irr_\CG(W)|=1$.

\itemth{2} 
There exists a left cell $C \subseteq \CG$ such that $\isomorphisme{C} \in
\Irr(W)$.

\itemth{3} $f_\chi = 1$ for some $\chi \in \Irr_\CG(W)$.

\itemth{4} For any left cell $C \subseteq \CG$, we have $\isomorphisme{C} 
\in \Irr(W)$.

\itemth{5} $|\CG|=|\CG_{(2)}|^2$.

\itemth{6} $\CG_{(2)}\subseteq \DC$, that is, all involutions in $\CG$
are ``distinguished''.
\end{itemize}
\end{lem}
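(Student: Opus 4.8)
Here I write $[C]$ for the character $\isomorphisme{C}$, and for a two‑sided cell $\CG$ I write $N_\CG$ for the number of left cells contained in $\CG$. The plan is to prove $(1)\Leftrightarrow(5)$, then run the cycle $(1)\Rightarrow(4)\Rightarrow(2)\Rightarrow(3)\Rightarrow(1)$, and finally prove $(1)\Rightarrow(6)\Rightarrow(1)$. Two numerical identities will be used throughout. The first, $|\CG|=\sum_{\chi\in\Irr_\CG(W)}\chi(1)^2$, is recorded in Definition~\ref{def:fam}. The second is
\[|\CG_{(2)}|=\sum_{\chi\in\Irr_\CG(W)}\chi(1),\]
obtained by summing the formula of Proposition~\ref{myinv} over the left cells $C\subseteq\CG$: this gives $|\CG_{(2)}|=\sum_{C\subseteq\CG}\sum_{\chi\in\Irr(W)}\langle[C],\chi\rangle_W$, and then $\sum_{C\subseteq\CG}\langle[C],\chi\rangle_W$ equals $\chi(1)$ for $\chi\in\Irr_\CG(W)$ (the cell modules over all left cells of $W$ sum to the regular representation, and the cells outside $\CG$ contribute $0$ by Definition~\ref{def:fam}, a fact one can also read off from Proposition~\ref{lem:gamma-smooth}) and equals $0$ otherwise. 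Granting these, $(1)\Leftrightarrow(5)$ is the elementary remark that for positive integers $a_1,\dots,a_r$ one has $\sum_i a_i^2=(\sum_i a_i)^2$ iff $r=1$, applied to $r=|\Irr_\CG(W)|$ and $a_i$ running through the $\chi(1)$, $\chi\in\Irr_\CG(W)$.

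For $(1)\Rightarrow(4)$, write $\Irr_\CG(W)=\{\chi\}$, so $[C]=m_C\,\chi$ for every left cell $C\subseteq\CG$. Proposition~\ref{mult1} provides a left cell $C_0$ with $\langle[C_0],\chi\rangle_W=1$, and $C_0\subseteq\CG$ because the sets $\Irr_{\CG'}(W)$ are pairwise disjoint (the constituents of $[C_0]$ all lie in $\Irr_{\CG_0}(W)$, where $C_0\subseteq\CG_0$, and $\chi$ lies in $\Irr_{\CG_0}(W)\cap\Irr_\CG(W)$); thus $[C_0]=\chi$, and the identity $\sum_{\chi'\in\Irr(W)}f_{\chi'}^{-1}\langle[C_0],\chi'\rangle_W=1$ of Proposition~\ref{refine2} yields $f_\chi=1$. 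For an arbitrary left cell $C\subseteq\CG$ the same identity now reads $m_C=f_\chi=1$, so $[C]=\chi\in\Irr(W)$; this is $(4)$. The implication $(4)\Rightarrow(2)$ is trivial, and for $(2)\Rightarrow(3)$, if $[C]=\chi\in\Irr(W)$ for some left cell $C$ then Proposition~\ref{refine2} gives $f_\chi^{-1}=1$, and $\chi\in\Irr_\CG(W)$. The step $(3)\Rightarrow(1)$ uses the graph $\GC_\ph(W)$: assume $f_\chi=1$ with $\chi\in\Irr_\CG(W)$ and suppose $|\Irr_\CG(W)|\ge2$; by Corollary~\ref{coro:graphe}, $\Irr_\CG(W)$ is a connected component of $\GC_\ph(W)$ with at least two vertices, so $\chi$ is joined by an edge to some $\psi\in\Irr_\CG(W)$ with $\psi\ne\chi$, i.e. there is a left cell $C_1\subseteq\CG$ with $\langle[C_1],\chi\rangle_W\ge1$ and $\langle[C_1],\psi\rangle_W\ge1$. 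Applying Proposition~\ref{refine2} to $C_1$, the sum $\sum_{\chi'\in\Irr(W)}f_{\chi'}^{-1}\langle[C_1],\chi'\rangle_W=1$ has all terms $\ge0$, contains the term $f_\chi^{-1}\langle[C_1],\chi\rangle_W\ge1$, and contains in addition the strictly positive term $f_\psi^{-1}\langle[C_1],\psi\rangle_W$ — a contradiction, so $|\Irr_\CG(W)|=1$. (That $f_{\chi'}\ge1$ for every $\chi'\in\Irr(W)$, used here through $f_\psi>0$ and again below, follows by taking a left cell $C$ with $\langle[C],\chi'\rangle_W=1$: its distinguished element $d$ has $c_{d,\chi'}=\pm1$ by Proposition~\ref{refine2} since $n_d^2=1$, whence $f_{\chi'}=\sum_{w\in C}c_{w,\chi'}^2\ge c_{d,\chi'}^2=1$ by Proposition~\ref{refine1}.)

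It remains to deal with $(6)$. For $(1)\Rightarrow(6)$: by $(4)$ each left cell $C\subseteq\CG$ has $[C]=\chi\in\Irr(W)$, so by Proposition~\ref{myinv} it contains exactly one involution; the distinguished element $d_C\in\DC\cap C$ is an involution (Conjectures P6 and P13), hence that involution is $d_C$, and therefore $\CG_{(2)}=\DC\cap\CG\subseteq\DC$. For $(6)\Rightarrow(1)$: from $(6)$, $\CG_{(2)}$ coincides with $\DC\cap\CG$, which by P13 has exactly one element in each of the $N_\CG$ left cells contained in $\CG$, so $|\CG_{(2)}|=N_\CG$. On the other hand, summing the identity $\sum_{\chi'\in\Irr(W)}f_{\chi'}^{-1}\langle[C],\chi'\rangle_W=1$ of Proposition~\ref{refine2} over the left cells $C\subseteq\CG$ gives $N_\CG=\sum_{\chi\in\Irr_\CG(W)}f_\chi^{-1}\chi(1)$, while the second numerical identity above gives $|\CG_{(2)}|=\sum_{\chi\in\Irr_\CG(W)}\chi(1)$. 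Hence $\sum_{\chi\in\Irr_\CG(W)}(1-f_\chi^{-1})\chi(1)=0$, and since every $f_\chi\ge1$ this forces $f_\chi=1$ for all $\chi\in\Irr_\CG(W)$; thus $(3)$ holds, and therefore so does $(1)$.

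The only genuinely non‑formal ingredient feeding the cycle is Proposition~\ref{mult1} — the case‑by‑case fact that every irreducible character of $W$ has multiplicity one in some cell module — which is what makes $(1)\Rightarrow(4)$ (hence the determination $f_\chi=1$ in the smooth case) work; everything else is short manipulation of the orthogonality relations of Section~\ref{sec:lc} together with Conjectures P5, P6, P13 and the description of families by connected components of $\GC_\ph(W)$. The point demanding care is purely bookkeeping: each time an auxiliary left cell is produced one must check it lies inside $\CG$, which always reduces to the disjointness of the sets $\Irr_{\CG'}(W)$, and one must make sure the loop is not circular — it is not, since $(3)\Rightarrow(1)$ invokes only Corollary~\ref{coro:graphe} and Proposition~\ref{refine2}, not $(1)\Rightarrow(4)$.
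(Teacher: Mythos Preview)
Your proof is correct and uses the same essential ingredients as the paper (Propositions~\ref{myinv}, \ref{mult1}, \ref{refine2} and Corollary~\ref{coro:graphe}), though with a different arrangement of the implication cycle. Two points of contrast are worth noting. Your step $(3)\Rightarrow(1)$ is slicker than the paper's corresponding step $(3)\Rightarrow(4)$: the paper first shows $[C]=\chi$ for the particular cell from Proposition~\ref{mult1} and then propagates this along a chain of left cells $C=C_0,C_1,\ldots,C_n=C'$ supplied by Corollary~\ref{coro:graphe}, comparing the identities of Proposition~\ref{refine2} for consecutive $C_i$; you bypass the chain argument entirely by producing a single left cell $C_1$ in which both $\chi$ and some $\psi\ne\chi$ occur and reading off a contradiction from Proposition~\ref{refine2}. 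Conversely, the paper's handling of $(6)$ is more direct than yours: it observes that $(4)\Leftrightarrow(6)$ follows immediately from Proposition~\ref{myinv} together with P6 and P13 (if every involution in $C\subseteq\CG$ is distinguished then $C$ contains exactly one involution, hence $[C]$ has a single irreducible constituent, and conversely), which avoids your detour through the auxiliary inequality $f_\chi\ge 1$ and the summation identity $N_\CG=\sum_{\chi}f_\chi^{-1}\chi(1)$.
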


Note also that the condition ``$[C] \in \Irr(W)$'' can be replaced by
``$|C \cap C^{-1}|=1$''; see Proposition~\ref{lem:gamma-smooth}. 

\begin{proof} First we show the equivalence of (1), (2), (3), (4).

``$(1) \Rightarrow (2)$'' Let $\Irr_{\CG}(W)=\{\chi\}$. Let $C$ be a left 
cell as in Proposition~\ref{mult1}. Since $\langle [C],\chi\rangle_W\neq 0$,
we have $C \subseteq \CG$; see Definition~\ref{def:fam}. Furthermore, by 
Corollary~\ref{coro:graphe}, {\it every} irreducible constituent of $[C]$ 
belongs to $\CG$. Hence, $\chi$ is the only constituent of $[C]$. Since it 
occurs with multiplicity~$1$, we have $[C]=\chi \in \Irr(W)$.

``$(2) \Rightarrow (3)$'' If $\chi:=[C] \in \Irr(W)$, then the identity
in Proposition~\ref{refine2} reduces to $1=f_{\chi}^{-1}$ and so $f_\chi=1$.

``$(3) \Rightarrow (4)$'' Let $C$ be a left cell as in 
Proposition~\ref{mult1}. Then, as above, we have $C \subseteq \CG$.
The identity in Proposition~\ref{refine2} now shows that
\[ 1=1+\sum_{\chi \neq \psi \in \Irr(W)} f_\psi^{-1}\langle [C],
\psi\rangle_W.\]
Hence, we have $\langle [C],\psi\rangle_W=0$ for all $\psi \neq \chi$
and so $[C]=\chi\in \Irr(W)$. Now let $C'$ be another left cell contained 
in $\CG$. By Corollary~\ref{coro:graphe}, there exists a sequence $C=C_0, 
C_1,\ldots, C_n=C'$ of left cells contained in $\CG$ such that $\langle [C_i],
[C_{i+1}]\rangle_W\neq 0$ for all $i$. We shall prove by induction on $i$
that $\isomorphisme{C_i} =\isomorphisme{C}$. This is clear if $i=0$, so 
assume that $\isomorphisme{C_i} =\isomorphisme{C}$ and let us show that 
$\isomorphisme{C_{i+1}} =\isomorphisme{C}$. By assumption, we have
$\langle \isomorphisme{C_i},\isomorphisme{C_{i+1}} \rangle_W \neq 0$,
which means that $\langle \isomorphisme{C_i},\psi\rangle_W \leq 
\langle \isomorphisme{C_{i+1}},\psi\rangle_W$ for all $\psi \in \Irr(W)$. 
Applying the identity in Proposition~\ref{refine2} to both $C_i$ and 
$C_{i+1}$, we obtain
\[ 1=\sum_{\psi\in \Irr(W)}f_\psi^{-1}\langle [C_i],\psi\rangle_W
\leq \sum_{\psi\in \Irr(W)}f_\psi^{-1}\langle [C_{i+1}],\psi\rangle_W=1.\]
Hence, we must have $\langle \isomorphisme{C_i},\psi\rangle_W= 
\langle \isomorphisme{C_{i+1}}\rangle_W$ for all $\psi \in \Irr(W)$
and so $\isomorphisme{C_{i+1}}=\isomorphisme{C_i}\in \Irr(W)$, as required. 
Thus, (4) holds.

``$(4) \Rightarrow (1)$'' By Corollary~\ref{coro:graphe}, we necessarily
have $\chi:=[C]=[C']$ for all left cells $C,C' \subseteq \CG$ and then
$\Irr_{\CG}(W)=\{\chi\}$. 

Now we show the remaining equivalences.

``$(1) \Leftrightarrow (5)$'' Let $|\Irr_{\CG}(W)|=n\geq 1$ and write 
$\Irr_{\CG}(W)=\{\chi_1,\ldots, \chi_n\}$. Then, as noted in 
Definition~\ref{def:fam}, we have 
\[ |\CG|=\chi_1(1)^2+\cdots + \chi_n(1)^2.\]
On the other hand, it easily follows from Proposition~\ref{myinv} that
 $|\CG_{(2)}|=\chi_1(1)+\cdots +  \chi_n(1)$; see \cite[Cor.~3.12]{geck 
invo}, Hence, we have
\[ |\CG_{(2)}|^2=\bigl(\chi_1(1)+\cdots +  \chi_n(1)\bigr)^2,\]
which implies that $|\CG|=|\CG_{(2)}|^2$ if and only if $n=1$.

``$(4) \Leftrightarrow (6)$'' Recall that, by Lusztig's Conjecture P13, 
every left cell contains a unique element of $\DC$;  furthermore, by P6,
we have $d^2=1$ for all $d \in \DC$. So the equivalence immediately
follows from Proposition~\ref{myinv}.
\end{proof}

\begin{table}[htbp]
\centerline{\small \begin{tabular}{@{{{\vrule width 1.5pt}}}c@{{{\vrule 
width 1.5pt}}}c|c@{{{\vrule width 1.5pt}}}} \hlinewd{1.5pt}
\petitespace \hskip2mm Type of $W$ \hskip2mm & 
\hskip2mm $|\Cell_{LR}(W)|$ \hskip2mm & 
\hskip2mm $|\Cell_{LR}^{\text{smooth}}(W)|$ \hskip2mm~ \\
\hlinewd{1.5pt}
$I_2(m)$ & 3 & 2 \\ \hline 
$B_3$ & 6 & 4 \\
$B_4$ & 10 & 5 \\
$B_5$ & 16 & 6 \\ 
$B_6$ & 26 & 10 \\ 
$B_7$ & 40 & 12 \\ 
$B_8$ & 60 & 15 \\ \hline
$D_4$ & 11 & 10 \\
$D_5$ & 14 & 12 \\
$D_6$ & 27 & 22 \\ 
$D_7$ & 35 & 25 \\ 
$D_8$ & 60 & 40 \\  \hline
$E_6$ & 17 & 14 \\
$E_7$ & 35 & 24 \\
$E_8$ & 46 & 23 \\ \hline
$F_4$ & 11 & 8 \\ \hline
$H_3$ & 7 & 4 \\
$H_4$ & 13 & 6 \\
\hlinewd{1.5pt}
\end{tabular}}
\bigskip
\caption{Number of smooth cells (equal parameters)}\label{table:eq}
\end{table}

\medskip

\begin{exemple} \label{p115eq} Assume that we are in the equal
parameter case where $\ph$ is constant. In this case, it is known
that Lusztig's Conjectures P's for $(W,S,\ph)$ hold; see \cite{Lu2}, 
\cite[Chap.~15]{lusztig} (for Weyl groups) and \cite{fokko} (for the 
remaining types). 

Note that ``smooth'' two-sided cells actually occur quite often in this case.
For example, assume that $(W,S)$ is of type $A_{n-1}$ where $W=\SG_n$
is the symmetric group. Then we are automatically in the equal parameter
case and we have $f_\chi=1$ for all $\chi \in \Irr(W)$; see, for example,
\cite[5.16]{LuB} and \cite[9.4.5]{gepf}. Hence, every two-sided cell in
$W$ is smooth in this case.

For further information, we give in Table~\ref{table:eq} the number of 
smooth two-sided cells (equal parameter case) whenever 
$|S| \le 8$ and $(W,S)$ is not of type $A$. To compute this table
it suffices, by Lemma~\ref{lem:smooth}, to find all $\chi \in \Irr(W)$ 
such that $f_\chi=1$, and this information is easily available from
the tables in \cite{LuB}, \cite[Appendix]{gepf}.
\end{exemple}

\medskip

\begin{exemple} \label{smoothb} Let $(W,S)$ be of type $B_n$ and write
$S=\{t,s_1,s_2,\dots,s_{n-1}\}$ in such a way that the Dynkin diagram
of $(W,S)$ is given as follows.
\begin{center}
\begin{picture}(220,30)
\put( 40, 10){\circle{10}}
\put( 44,  7){\line(1,0){33}}
\put( 44, 13){\line(1,0){33}}
\put( 81, 10){\circle{10}}
\put( 86, 10){\line(1,0){29}}
\put(120, 10){\circle{10}}
\put(125, 10){\line(1,0){20}}
\put(155,  7){$\cdot$}
\put(165,  7){$\cdot$}
\put(175,  7){$\cdot$}
\put(185, 10){\line(1,0){20}}
\put(210, 10){\circle{10}}
\put( 38, 20){$t$}
\put( 76, 20){$s_1$}
\put(116, 20){$s_2$}
\put(204, 20){$s_{n{-}1}$}
\end{picture}
\end{center}
We set $\ph(t)=b$ and $\ph(s_1)=\cdots=\ph(s_{n-1})=a$. Then it follows 
from the computation of constructible characters 
in~\cite[Proposition~22.25]{lusztig} that:
\begin{equation*}
\mbox{\it $f_\chi=1$ for all $\chi \in \Irr(W)$} \qquad
\Longleftrightarrow \qquad b \not\in \{a,2a,\dots,(n-1)a\}.\tag{a}
\end{equation*}
Hence, if Lusztig's Conjectures P's for $(W,S,\ph)$ hold, then 
Lemma~\ref{lem:smooth} shows that all two-sided cells of $W$ are smooth if 
and only if $b \not\in \{a,2a,\dots,(n-1)a\}$.
Without assuming that Lusztig's Conjectures P's for $(W,S,\ph)$ hold,
the only known results are the following:
\begin{equation*}
\mbox{\it All the two-sided cells in $W$ are smooth if $a=2b$ or 
$3a=2b$ or $b > (n-1) a$}. \tag{b}
\end{equation*}
If $a=2b$ or $3a=2b$, then (b) follows essentially from 
\cite[\S{16}]{lusztig} (see~\cite[Theorem~3.14]{BGIL} for some 
explanation). If $b > (n-1) a$, then (b) follows 
from~\cite[Theorem~7.7]{bonnafe iancu} and 
\cite[Theorem~3.5~and~Corollary~5.2]{bonnafe two}. 
\end{exemple}

\section{A basic identity} \label{sec:invspec}

\springer{{\bf Hypothesis.} {\it Throughout this section we assume 
that Lusztig's Conjectures P's hold for $(W,S,\ph)$.}}

\medskip

The main result of this section is the following basic identity, which links
cells and involutions through the leading coefficients of character values. 

\begin{lem}[The $(\CG,C,\CC)$-identity] \label{main1} Let $\CG$ be a 
two-sided cell and $C$ a left cell contained in $\CG$. Let $\CC$ be a 
union of conjugacy classes of involutions in $W$. Then
\[\langle [C],\chi\rangle_W\sum_{w \in \CC \cap \CG} c_{w,\chi}=\chi(1) 
\sum_{w \in \CC \cap C} c_{w,\chi} \qquad \mbox{for all $\chi \in 
\Irr(W)$}.\]
\end{lem}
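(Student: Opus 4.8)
The plan is to test a suitable central element of $\HC$ against the Kazhdan--Lusztig basis element $c_d^\dagger$ attached to the distinguished element $d$ of $C$, and then to read off the identity by comparing leading coefficients of character values; everything below uses the standing hypothesis that Lusztig's Conjectures P hold for $(W,S,\ph)$.

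First, by Remark~\ref{main1a} the element $\Tgras{\CC}^\varepsilon=\sum_{w\in\CC}(-1)^{\ell(w)}T_w$ is central in $\HC$, so it acts on the simple $K\HC$-module affording $\chi_\ph$ by a scalar $\o_\chi\in K$, and taking traces gives $\chi(1)\,\o_\chi=\sum_{w\in\CC}(-1)^{\ell(w)}\chi_\ph(T_w)$. Since $v^{\ab_\chi}\chi_\ph(T_w)\equiv(-1)^{\ell(w)}c_{w,\chi}\bmod A_{>0}$ and, by Corollary~\ref{refine0}, $c_{w,\chi}=0$ unless $w$ lies in the two-sided cell of $\chi$, we obtain for $\chi\in\Irr_\CG(W)$
\[
  v^{\ab_\chi}\,\chi(1)\,\o_\chi\ \equiv\ \sum_{w\in\CC\cap\CG}c_{w,\chi}\pmod{A_{>0}} .
\]
The same argument, using Remark~\ref{eq:cwdagger-tw} (so that $c_w^\dagger-(-1)^{\ell(w)}T_w\in\bigoplus_{y\bruhatordrestrict w}A_{>0}T_y$) together with the fact that $v^{\ab_\chi}\chi_\ph(T_y)\in A_{\ge 0}$ for \emph{every} $y$, shows $v^{\ab_\chi}\chi_\ph(c_w^\dagger)\equiv c_{w,\chi}\bmod A_{>0}$ for all $w$. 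Recall also that $\ab_\chi$ equals the common value $a(\CG)$ of Lusztig's $a$-function on $\CG$ for every $\chi\in\Irr_\CG(W)$.

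Now fix $\chi\in\Irr_\CG(W)$ and let $d$ be the unique distinguished element of $C$ (it exists by P13 and satisfies $d^2=1$ by P6). Consider $c_d^\dagger\,\Tgras{\CC}^\varepsilon\in\HC$ and the coefficient $\lambda_\chi$ of $v^{-2\ab_\chi}$ in $\chi_\ph(c_d^\dagger\,\Tgras{\CC}^\varepsilon)$. On the one hand, since $\Tgras{\CC}^\varepsilon$ is central, $\chi_\ph(c_d^\dagger\,\Tgras{\CC}^\varepsilon)=\o_\chi\,\chi_\ph(c_d^\dagger)$, and multiplying the two expansions above gives $\lambda_\chi=\tfrac{c_{d,\chi}}{\chi(1)}\sum_{w\in\CC\cap\CG}c_{w,\chi}$. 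On the other hand, $c_d^\dagger\,\Tgras{\CC}^\varepsilon\equiv\sum_{w\in\CC}c_d^\dagger c_w^\dagger=\sum_{w\in\CC}\sum_z h_{d,w,z}\,c_z^\dagger$ modulo terms not affecting the coefficient of $v^{-2\ab_\chi}$; since each $h_{d,w,z}$ is bar-invariant with $\deg h_{d,w,z}\le a(z)$, its lowest term is $\gamma_{d,w,z^{-1}}v^{-a(z)}$ (where $\gamma_{d,w,z^{-1}}$ is the leading coefficient entering Lusztig's asymptotic ring $J$; see \cite[Chap.~18]{lusztig}), so pairing these with the leading coefficients $c_{z,\chi}$ of $\chi_\ph(c_z^\dagger)$ — and using that, by the estimates discussed below, only $z\in\CG$ contributes in this degree — one finds $\lambda_\chi=\sum_{w\in\CC}\sum_{z\in\CG}\gamma_{d,w,z^{-1}}\,c_{z,\chi}$. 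The inner sum is the trace on the simple $J_\CG$-module $E_\chi$ of $\sum_z\gamma_{d,w,z^{-1}}t_z=t_dt_w$; as $d$ is the unit of $J$ on the left cell $C$ we have $t_dt_w=t_w$ if $w\in C$ (i.e. $w\siml d$) and $t_dt_w=0$ otherwise, while the trace of $t_w$ on $E_\chi$ is $c_{w,\chi}$ by Lusztig's identification of leading coefficients with $J$-traces (\cite{lulc}). Hence $\lambda_\chi=\sum_{w\in\CC\cap C}c_{w,\chi}$. Equating the two values of $\lambda_\chi$, multiplying by $\chi(1)$, and substituting $c_{d,\chi}=n_d\langle\isomorphisme{C},\chi\rangle_W$ (Proposition~\ref{refine2}, with $n_d^2=1$ by P5, so that the signs $n_d$ cancel), yields the stated identity for $\chi\in\Irr_\CG(W)$; for $\chi\notin\Irr_\CG(W)$ both sides vanish by Corollary~\ref{refine0} and Definition~\ref{def:fam}, so it holds for all $\chi\in\Irr(W)$.

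The main obstacle is the control of the $v$-adic filtration in the second computation of $\lambda_\chi$: one has to check that structure constants $h_{d,w,z}$ with $w$ or $z$ lying outside $\CG$ contribute nothing in degree $-2\ab_\chi$, which amounts to the statement that $\chi_\ph$, after passage to the asymptotic ring, factors through the block $J_\CG$ and therefore only sees the $J_\CG$-leading part $t_d\cdot\sum_{w\in\CC\cap\CG}t_w$ of $c_d^\dagger\,\Tgras{\CC}^\varepsilon$. This is precisely where the full strength of Lusztig's Conjectures P (notably P4, P5, P8 and P13) and of his leading-coefficient / asymptotic-ring machinery is needed; the accompanying bookkeeping of the signs $(-1)^{\ell(w)}$ and $n_d$ is routine but must be carried out consistently so that the final identity is sign-free.
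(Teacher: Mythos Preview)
Your proof is correct and takes essentially the same approach as the paper: both evaluate $\chi_\ph$ on the product of the central element $\Tgras{\CC}^\varepsilon$ with the basis element attached to the distinguished $d\in C$ and read off the identity from the coefficient of $v^{-2\ab_\chi}$, the only cosmetic differences being that the paper multiplies by $T_d$ on the right (rather than $c_d^\dagger$ on the left) and unwinds the structure constants $\gamma_{w,d,x^{-1}}$ directly via P2, P5, P7, P9 and \cite[Lemma~3.5]{myp115} rather than passing to the asymptotic ring~$J$. One minor slip: the correct statement is $t_dt_w=n_dt_w$ when $w^{-1}\siml d$ (not $w\siml d$), but this is harmless here since every $w\in\CC$ is an involution and the factors $n_d$ cancel as you note.
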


\begin{proof} Let $\Zrm(\HC)$ be the centre of $\HC$. We denote by 
$\o_\chi : \Zrm(\HC) \to A$ the {\it central character} associated with 
$\chi_\ph$: if $z \in \Zrm(\HC)$, then $\o_\chi(z)=\chi_\ph(z)/\chi(1)$. 
Now consider the central element
\[\Tgras{\CC}^\varepsilon=\sum_{w \in \CC} (-1)^{\ell(w)}T_w \qquad
\mbox{(see Remark~\ref{main1a})}.\]
The desired identity will be obtained by evaluating
$\chi_\ph$ on $\Tgras{\CC}^\varepsilon T_d$, where $d$ is the unique element 
of $\DC$ contained in $C$ (see Lusztig's Conjecture P13). First note that,
if $\chi\not\in \Irr_{\CG}(W)$, then both sides of the identity are zero;
see Corollary~\ref{refine0}. 

We can now assume that $\chi\in \Irr_{\CG}(W)$. Since $\Tgras{\CC}^\varepsilon
\in \Zrm(\HC)$, we have $\chi(\Tgras{\CC}^\varepsilon)=\chi(1) 
\omega_\chi(\Tgras{\CC}^\varepsilon)$ and $\chi(\Tgras{\CC}^\varepsilon T_d)
=\omega_\chi(\Tgras{\CC}^\varepsilon)\chi(T_d)$. Furthermore, 
\[v^{\ab_\chi}\chi(\Tgras{\CC}^\varepsilon)=\sum_{w \in \CC} 
v^{\ab_\chi}(-1)^{\ell(w)} \chi(T_w) \equiv \Bigl(\sum_{w \in \CC} 
c_{w,\chi}\Bigr)\bmod A_{>0}.\]
It follows that 
\[v^{2\ab_\chi} \chi(1)\chi(\Tgras{\CC}^\varepsilon T_d)\equiv 
\bigl(v^{\ab_\chi} \chi(\Tgras{\CC}^\varepsilon) \bigr)\bigl(v^{\ab_\chi}
\chi(T_d) \bigr)\equiv (-1)^{\ell(d)}\Bigl(\sum_{w \in \CC} c_{w,\chi}
c_{d,\chi}\Bigr)\bmod A_{>0}.\]
Now, by Proposition~\ref{refine2}, we have $c_{d,\chi}=n_d \langle [C],
\chi\rangle_W$. Thus, we obtain
\[v^{2\ab_\chi} \chi(1)\chi(\Tgras{\CC}^\varepsilon T_d)\equiv (-1)^{\ell(d)}n_d
\langle [C],\chi\rangle_W \Bigl(\sum_{w \in \CC} c_{w,\chi}\Bigr) 
\bmod A_{>0}.\]
The summation on the right hand side can be taken over all $w \in 
\CC \cap \CG$ (instead of $w\in \CC$) since $c_{w,\chi}=0$ unless 
$w \in \CG$; see Corollary~\ref{refine0}. Next we re-write 
$\Tgras{\CC}^\varepsilon T_d$ using the Kazhdan--Lusztig basis. For any 
$w \in W$, we have $T_w \equiv (-1)^{\ell(w)}c_w^\dagger \bmod \HC_{>0}$; see
Remark~\ref{eq:cwdagger-tw}.  This yields 
\begin{align*}
\Tgras{\CC}^\varepsilon T_d&=\sum_{w \in \CC} (-1)^{\ell(w)}T_wT_d \in 
\sum_{w \in W} (c_w^\dagger+\HC_{>0})((-1)^{\ell(d)}c_d^\dagger+\HC_{>0})\\&
\subseteq \Bigl(\sum_{w \in \CC} (-1)^{\ell(d)}c_w^\dagger 
c_d^\dagger\Bigr)+ \HC_{>0}\HC_{\geq 0}+\HC_{\geq 0}\HC_{>0}.
\end{align*}
We certainly have $v^{\ab_\chi}\chi(h) \in A_{\geq 0}$ for any
$h \in \HC_{\geq 0}$ and $v^{\ab_\chi}\chi(h) \in A_{>0}$ for any
$h \in \HC_{>0}$. Hence, we obtain
\[ v^{2\ab_\chi}\chi(\Tgras{\CC}^\varepsilon T_d) \equiv (-1)^{\ell(d)}
\Bigl(\sum_{w \in \CC} v^{2\ab_\chi}\chi\bigl(c_w^\dagger c_d^\dagger 
\bigr)\Bigr) \bmod A_{>0}.\]
We now look at the term $\chi(c_w^\dagger c_d^\dagger)$ (for $w\in
\CC$) in more detail. Following Lusztig's notation 
in~\cite[\S{13}]{lusztig}, we set for $x$, $y \in W$, 
\[c_xc_y = \sum_{z \in W} h_{x,y,z} c_z \qquad \mbox{where $h_{x,y,z} 
\in A$}.\]
Furthermore, if $z \in W$, we define $\ab(z)=\max\{\deg(h_{x,y,z})
\mid x,y \in W\}$. Let us now consider 
\[ v^{2\ab_\chi}\chi(c_w^\dagger c_d^\dagger)= \sum_{x \in W} 
\bigl(v^{\ab_\chi} h_{w,d,x}\bigr) \bigl(v^{\ab_\chi} \chi(c_x^\dagger) 
\bigr).\]
Let $x \in W$ be such that $h_{w,d,x}\neq 0$ and $\chi(c_x^\dagger)\neq 0$.
Since Lusztig's Conjecture P4 holds, the first condition implies that 
$\ab(d)\leq \ab(x)$. By \cite[Lemma~3.5]{myp115}, the second condition 
implies that $\ab(x)\leq \ab_\chi$. (Note that the function $\tilde{a}(w)$ 
in \cite[3.5]{myp115} agrees with $\ab(w)$ by \cite[Prop.~3.6 and 
Rem.~4.2]{myp115}.) On the other hand, since $\chi\in \Irr_{\CG}(W)$, we 
have $\ab_\chi=\ab(d)$; see \cite[Proposition~20.6]{lusztig}. Thus, we 
must have $\ab(d)=\ab(x)=\ab_\chi$. Furthermore, since $h_{w,d,x}\neq 0$
and  $\ab(d)= \ab(x)$, we can now even conclude that $x \in C$, by 
Lusztig's Conjecture P9. Thus, we obtain
\[v^{2\ab_\chi}\chi(c_w^\dagger c_d^\dagger)=\sum_{x \in C} \bigl(v^{\ab(x)}
h_{w,d,x}\bigr) \bigl(v^{\ab_\chi} \chi(c_x^\dagger) \bigr).\]
Now, by Remark~\ref{eq:cwdagger-tw}, we have $v^{\ab_\chi}\,
\chi_\varphi(c_w^\dagger) \equiv c_{w,\chi} \bmod A_{>0}$.
Hence, taking constant terms in the above identity, we obtain
\[v^{2\ab_\chi}\chi(c_w^\dagger c_d^\dagger) \equiv 
\Bigl(\sum_{x \in C} \gamma_{w,d,x^{-1}}\, c_{x,\chi}\Bigr) \bmod A_{>0};\]
here, we denote by $\gamma_{w,d,x^{-1}}$ the constant term of 
$v^{\ab(x)}h_{w,d,x}$, as in \cite[13.6]{lusztig}. By Lusztig's
Conjectures P2, P5 and P7, we have
\[ \gamma_{w,d,x^{-1}}=\left\{\begin{array}{cl} n_d & \quad \mbox{if
$x=w$},\\ 0 & \quad \mbox{otherwise}.\end{array}\right.\]
We conclude that $v^{2\ab_\chi}\chi(c_w^\dagger c_d^\dagger) \equiv 
\delta_{w \in C} n_d\, c_{w,\chi} \bmod A_{>0}$ and so
\begin{align*} 
v^{2\ab_\chi}\chi(\Tgras{\CC}^\varepsilon T_d) &\equiv (-1)^{\ell(d)}
\sum_{w \in \CC} v^{2\ab_\chi}\chi\bigl(c_w^\dagger c_d^\dagger \bigr) \\
&\equiv (-1)^{\ell(d)}n_d\sum_{w \in \CC} \delta_{w \in C} n_d\, c_{w,\chi} 
\\ & \equiv (-1)^{\ell(d)}n_d\Bigl(\sum_{w \in \CC \cap C} c_{w,\chi}\Bigr)
\bmod A_{>0}.
\end{align*}
Comparing with our earlier expression for $v^{2\ab_\chi} \chi(1)
\chi(\Tgras{\CC}^\varepsilon T_d) \bmod A_{>0}$ yields the desired identity.
\end{proof}

\medskip

\begin{exemple} \label{smooth1} Let $\CG$ be a two-sided cell which is 
``smooth'', that is, we have  
\[ \Irr_{\CG}(W)=\{\chi\} \qquad \mbox{for some $\chi \in \Irr(W)$}.\]
Let $d \in \CG \cap \DC$ and $\CC$ a union of conjugacy classes of
involutions in $W$. Let $C$ be the left cell containing $d$. Then we claim 
that the $(\CG,C,\CC)$-identity in Lemma~\ref{main1} reduces to:
\[ \sum_{w \in \CC \cap \CG} c_{w,\chi}= \left\{\begin{array}{cl} 
\chi(1)n_d & \quad \mbox{if $d \in \CC$},\\
0 & \quad \mbox{otherwise}.\end{array}\right.\]
Indeed, by Lemma~\ref{lem:smooth} and Corollary~\ref{refine0}, we have 
$[C]\in \Irr_{\CG}(W)$ and so $\chi=[C]$. This yields the left hand side. 
On the other hand, by Proposition~\ref{refine2}, we have $c_{d,\chi}=n_d
\langle [C],\chi\rangle_W=1$. Furthermore, by Lemma~\ref{lem:smooth}, all 
the involutions in $C$ are contained in $\DC$. Hence, $\CC \cap C=
\varnothing$ unless $d\in \CC$, in which case $\CC\cap C= \{d\}$. Thus, 
the right hand side of the $(\CG,C,\CC)$-identity reduces to the expression 
above.
\end{exemple}

\medskip

\begin{coro}\label{coro:conjugues}
{\bfit Recall our assumption that Lusztig's Conjectures P's for 
$(W,S,\ph)$ hold}. Let $\CG$ be a smooth two-sided cell. Then all the 
involutions in $\CG$ are conjugate in $W$. 
\end{coro}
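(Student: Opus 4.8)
The plan is to read off the statement from the smooth specialisation of the $(\CG,C,\CC)$-identity recorded in Example~\ref{smooth1}. Write $\Irr_\CG(W)=\{\chi\}$. The first step is to reduce to distinguished involutions: by Lemma~\ref{lem:smooth}, the smoothness of $\CG$ forces $\CG_{(2)}\subseteq\DC$, so \emph{every} involution of $\CG$ is distinguished, and it is enough to show that any two elements of $\CG_{(2)}=\DC\cap\CG$ are conjugate in $W$. (Such elements exist, since $\CG$ contains a left cell, whose distinguished element is an involution by Conjectures P13 and P6; but if $\CG_{(2)}=\varnothing$ there is nothing to prove anyway.)

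Next I would fix $d,d'\in\CG_{(2)}$ and let $\CC$ be the $W$-conjugacy class of $d$. Applying Example~\ref{smooth1} with the distinguished involution $d$ and the left cell containing it, the case $d\in\CC$ gives $\sum_{w\in\CC\cap\CG}c_{w,\chi}=\chi(1)\,n_d$, and this is nonzero because $\chi(1)\geqslant 1$ and $n_d=\pm 1$ by Conjecture P5. Applying Example~\ref{smooth1} again, this time with $d'$ and the left cell containing it --- which is contained in $\CG$ because two-sided cells are unions of left cells --- the \emph{same} quantity $\sum_{w\in\CC\cap\CG}c_{w,\chi}$ equals $0$ unless $d'\in\CC$. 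Since the quantity is nonzero, we conclude $d'\in\CC$, that is, $d'$ is conjugate to $d$ in $W$. As $d$ and $d'$ were arbitrary in $\CG_{(2)}$, all involutions of $\CG$ lie in one conjugacy class of $W$.

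The point worth emphasising is that essentially all of the work is already done: the centrality of $\Tgras{\CC}^\varepsilon$ (Lemma~\ref{lem:invo-central} together with Remark~\ref{main1a}), the $(\CG,C,\CC)$-identity (Lemma~\ref{main1}, which draws on Lusztig's Conjectures P2, P4, P5, P7, P9), and its reduction in the smooth case (Example~\ref{smooth1}, using Lemma~\ref{lem:smooth}) are the substantial inputs. Granting these, the corollary is a one-line comparison of the two sides of a single identity; the only things that need a word of justification are the non-vanishing $\chi(1)\,n_d\neq 0$ and the innocuous fact that the left cell used in the second application of Example~\ref{smooth1} genuinely lies in $\CG$.
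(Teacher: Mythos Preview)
Your proof is correct and follows essentially the same approach as the paper's own proof: reduce to distinguished involutions via Lemma~\ref{lem:smooth}, then apply Example~\ref{smooth1} twice (once with $d$ to see the sum is nonzero, once with $d'$ to force $d'\in\CC$). The only cosmetic difference is that the paper enumerates the involutions as $d_1,\ldots,d_n$ (one per left cell) and compares $d_1$ against each $d_i$, whereas you fix two arbitrary involutions $d,d'$; the logic is identical.
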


\begin{proof} Let $\CG=C_1 \amalg \ldots \amalg C_n$ be the partition
of $\CG$ into left cells. By Lusztig's Conjecture P13, for each $i$ there 
is a unique $d_i \in \DC \cap C_i$. On the other hand, by 
Lemma~\ref{lem:smooth}, all involutions in $\CG$ are contained in $\DC$.
It follows that $\{d_1,\ldots,d_n\}$ is precisely the set of involutions
in $\CG$. Now let $\CC$ be the conjugacy class containing $d_1$. Then the 
identity in Example~\ref{smooth1} reads:
\[ \sum_{w \in \CC \cap \CG} c_{w,\chi}=\chi(1)n_{d_1} \neq 0\qquad 
\mbox{(since $d_1 \in \CC$)}.\]
Similarly, for any $i\geq 2$, we have 
\[\sum_{w \in \CC \cap \CG} c_{w,\chi}= \left\{\begin{array}{cl} 
\chi(1)n_{d_i} & \quad \mbox{if $d_i \in \CC$},\\
0 & \quad \mbox{otherwise}.\end{array}\right.\]
Since the left hand side is non-zero, we conclude that $d_i \in \CC$,
as claimed.
\end{proof}

\medskip
\begin{exemple} \label{remsn1} Let $W=\SG_n$ be of type $A_{n-1}$ with 
generators given by the basic transpositions $s_i=(i,i+1)$ for $1 \leq 
i \leq n-1$. Then, as already mentioned in Example~\ref{p115eq}, all 
the two-sided cells in $W$ are smooth and so we now recover a known 
result of Sch\"utzenberger~\cite{sch} in this case. An elementary proof 
that Lusztig's Conjectures P's for $(W,S,\ph)$ hold is given in 
\cite{mysn} (see also \cite[\S 2.8]{geja}). We can now also explicitly 
determine the conjugacy class of involutions associated with a two-sided 
cell. Indeed, it is well-known that the irreducible characters of $W=\SG_n$ 
have a natural labelling by the partitions of $n$; we write this in the form
\[ \Irr(\SG_n)=\{\chi^\alpha \mid \alpha \vdash n\}.\]
For example, $\chi^{(n)}$ is the trivial character and $\chi^{(1^n)}$ is 
the sign character. For $\alpha \vdash n$, let $\CG_\alpha$
be the unique two-sided cell such that $\chi^\alpha \in 
\Irr_{\CG_{\alpha}}(\SG_n)$. Since every two-sided cell is smooth, the sets 
$\{\CG_\alpha \mid \alpha \vdash n\}$ are precisely the two-sided cells 
of $\SG_n$. Given $\alpha \vdash n$, let $\CC_\alpha$ be the unique conjugacy 
class of involutions such that $\CC_\alpha\cap\CG_\alpha \neq \varnothing$.
Let $\alpha^*$ denote the transpose partition and $w_{\alpha^*}$ be 
the longest element in the Young subgroup $\SG_{\alpha^*}\subseteq \SG_n$. 
Then it is well-known that 
\[ \big\langle \Ind_{\SG_{\alpha^*}}^{\SG_n}(\varepsilon_{\alpha^*}),
\chi^\alpha\big\rangle_{\SG_n}=1 \qquad \mbox{where} \qquad 
\varepsilon_{\alpha^*}= \mbox{ sign character of $\SG_{\alpha^*}$}.\]
Using the formula for $\ab_{\chi^\alpha}$ in \cite[4.4]{LuB}, one also
sees that $\ab_{\chi^\alpha}=\ell(w_{\alpha^*})$. Hence, by 
Example~\ref{lem:useful}, we have $w_{\alpha^*} \in \CG_\alpha$ and so 
\[\CC_{\alpha}=\mbox{ conjugacy class containing $w_{\alpha^*}$}.\]
The discussion of this example will be continued in Example~\ref{koan}.
\end{exemple}

\medskip

\begin{exemple} \label{remab}
Assume that $(W,S)$ if of type $B_n$, as in Example~\ref{smoothb}. 
Let $b > (n-1) a$. Then the fact that all the involutions contained 
in a two-sided cell are conjugate can be proved directly from the 
combinatorial description given in~\cite[Theorem~7.7]{bonnafe iancu} 
and~\cite[Theorem~3.5~and~Corollary~5.2]{bonnafe two}, by using 
Sch\"utzenberger's result for the symmetric group~\cite{sch}. Also, 
for more general values of $a,b$, a conjectural description of left, right 
and two-sided cells is provided by~\cite[Conjectures~$\Arm^+$~and~B]{BGIL}: 
it would be interesting to see if the conjecture we have stated in the 
introduction is compatible with this conjectural combinatorial construction.
\end{exemple}

\medskip

\begin{rema} \label{f4i2} 
If $W$ is of type $F_4$ or $I_2(m)$ and $\ph$ is a general weight function, 
then Lusztig's Conjectures P's for $(W,S,\ph)$ are known to hold; see 
\cite[\S 5]{myp115}. In these cases, using the explicit knowledge of 
the cells and the classes of involutions (see \cite{geck f4} for type 
$F_4$ and \cite[\S 8]{lusztig} for type $I_2(m)$), one can directly check
that, if $C$ and $C'$ are two left cells contained in the same two-sided 
cell, then $\CC_2(C)=\CC_2(C')$. This provides some support for the 
general conjecture stated in the introduction.
\end{rema}


%

\section{The equal parameter case}\label{section:eqpa}

\springer{{\bf Hypothesis.} {\it From now until the end of this paper, we 
assume that we are in equal parameter case where $\Gamma=\ZM$ and 
$\varphi(s)=1$ for all $s \in S$.}}

\medskip
Under this hypothesis, as already mentioned in Example~\ref{p115eq}, it 
is known that Lusztig's Conjectures P's for $(W,S,\ph)$ hold. One further 
distinctive feature of this case is the existence of {\it special} characters.
For $\chi \in \Irr(W)$, let $\bb_\chi$ denote the smallest 
integer $i \geq 0$ such that $\chi$ occurs in the $i$th symmetric 
power of the standard reflection representation of $W$. Then, following
Lusztig \cite[4.1]{LuB}, $\chi$ is called {\it special} if $\ab_\chi=
\bb_\chi$.  Let 
\[\SC(W):=\{ \chi \in \Irr(W)\; \mid \; \ab_\chi=\bb_\chi\}\]
be the set of special characters of $W$. It is known that 
\begin{equation*}
|\SC(W)\cap \Irr_\CG(W)|=1 \qquad \mbox{for every two-sided cell
$\GC$ of $W$}.  \tag{$\diamondsuit_1$}
\end{equation*}
This is seen as follows. Consider the partition of $\Irr(W)$ in 
terms of ``families'', as defined in \cite[4.2]{LuB}. (The same 
definition also works for groups of non-crystallographic type; see
\cite[\S 6.5]{gepf}.) By \cite[4.14]{LuB}, every such family 
contains a unique special character (and this also holds for 
non-crystallographic types; see \cite[\S 6.5]{gepf}). Hence,
($\diamondsuit_1$) follows from the known fact that the partition of 
$\Irr(W)$ into families coincides with the partition in
Definition~\ref{def:fam}. For Weyl groups, this appeared in 
\cite[Theorem~5.25]{LuB}. A different argument based on certain 
``positivity'' properties of the Kazhdan--Lusztig basis is given in 
\cite[Prop.~23.3]{lusztig}; the same argument also works for the 
non-crystallographic types, where the analogous ``positivity'' 
properties are known by explicit computation; see Alvis \cite{Al}, 
DuCloux \cite{fokko}.

Now let $\CG$ be a two-sided cell. Then, if $\chi$ denotes the 
unique character in $\SC(W)\cap \Irr_\CG(W)$, we have
\begin{equation*}
(-1)^{\ab_{\chi}+\ell(w)}c_{w,\chi}>0 \quad \mbox{for all 
$w\in C\cap C^{-1}$}, \tag{$\diamondsuit_2$}
\end{equation*}
where $C$ is any left cell contained in $\CG$.
This holds by \cite[Prop.~3.14]{lulc} for Weyl groups and by 
\cite[Rem.~5.12]{geck pycox} for the remaining types. Note that, in the 
notation of \cite[\S 3]{lulc}, the special character $\chi$ corresponds 
to the pair $(1,1)\in \MC(G_{\CG})$ where $G_\CG$ is the finite group 
associated with $\CG$ (see also \cite[4.14.2]{LuB}). The factor 
$(-1)^{\ab_\chi+\ell(w)}$ comes from the identity \cite[3.5(a)]{lulc} which 
relates the leading coefficients to the characters of Lusztig's
asymptotic algebra $J$.

\begin{prop} \label{eqpa1} Recall our assumption that we are in the equal 
parameter case. Let $\CG$ be a two-sided cell and $C,C'$ be left cells 
of $W$ which are contained in $\CG$. Let $\CC$ be a conjugacy class of 
involutions in $W$. Then $\CC\cap C \neq \varnothing$ if and only if 
$\CC\cap C'\neq \varnothing$. In particular, we have $\CC_2(C)=\CC_2(C')$.
\end{prop}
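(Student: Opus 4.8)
The plan is to exploit the $(\CG,C,\CC)$-identity of Lemma~\ref{main1} together with the sign-definiteness property $(\diamondsuit_2)$ satisfied by the special character of $\CG$. Write $\SC(W)\cap\Irr_\CG(W)=\{\chi\}$; by $(\diamondsuit_1)$ this $\chi$ exists and is unique. By Proposition~\ref{mult1} (applied to $\chi$, which is legitimate since we are in the equal parameter case and Lusztig's Conjectures P's hold), there is a left cell $D\subseteq\CG$ with $\langle[D],\chi\rangle_W=1$; in fact since $f_\chi$ need not be $1$ here, $\CG$ need not be smooth, but $\chi$ still plays the role of a ``distinguished'' constituent. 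The key point is that for \emph{every} left cell $C\subseteq\CG$ one has $\langle[C],\chi\rangle_W\neq 0$: indeed $\chi\in\Irr_\CG(W)$ means $\chi$ is a constituent of $[C]$ for \emph{some} left cell $C\subseteq\CG$, and then Corollary~\ref{coro:graphe} together with the connectedness of $\Irr_\CG(W)$ in $\GC_\ph(W)$ propagates this; alternatively one invokes that the special character occurs in $[C]$ for every $C\subseteq\CG$, which is standard (this is where I would cite \cite[Prop.~3.14]{lulc} / \cite[Rem.~5.12]{geck pycox} again, or Proposition~\ref{lem:gamma-smooth}).

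Granting $\langle[C],\chi\rangle_W\neq 0$ for all $C\subseteq\CG$, the $(\CG,C,\CC)$-identity gives
\[
\sum_{w\in\CC\cap C} c_{w,\chi}=\frac{\langle[C],\chi\rangle_W}{\chi(1)}\sum_{w\in\CC\cap\CG}c_{w,\chi}.
\]
The right-hand sum $\sum_{w\in\CC\cap\CG}c_{w,\chi}$ does \emph{not} depend on $C$, only on $\CG$ and $\CC$. Now suppose $\CC\cap C'\neq\varnothing$ for some left cell $C'\subseteq\CG$; I claim the right-hand sum is then nonzero. For this I use $(\diamondsuit_2)$: every $w\in\CC\cap C'$ is an involution, hence $w=w^{-1}$, so $w\in C'\cap C'^{-1}$, and therefore $(-1)^{\ab_\chi+\ell(w)}c_{w,\chi}>0$. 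More generally every involution $w\in\CC\cap\CG$ lies in $C_w\cap C_w^{-1}$ for the left cell $C_w$ containing it, so $(-1)^{\ab_\chi+\ell(w)}c_{w,\chi}>0$ for \emph{all} such $w$. Since $\CC$ is a single conjugacy class all its elements have the same length parity (conjugation preserves $(-1)^{\ell(\cdot)}$ via the sign character; all involutions in $\CC$ have $\ell(w)$ of fixed parity because $\varepsilon(w)=(-1)^{\ell(w)}$ is a class function). Hence all the terms $(-1)^{\ab_\chi}c_{w,\chi}$, $w\in\CC\cap\CG$, have the \emph{same strict sign}, so their sum is nonzero as soon as $\CC\cap\CG\neq\varnothing$ — which holds since $\CC\cap C'\subseteq\CC\cap\CG$ is nonempty. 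Therefore $\sum_{w\in\CC\cap\CG}c_{w,\chi}\neq 0$, and then the displayed identity forces $\sum_{w\in\CC\cap C}c_{w,\chi}\neq 0$, so in particular $\CC\cap C\neq\varnothing$. By symmetry in $C,C'$ this proves the equivalence. The statement $\CC_2(C)=\CC_2(C')$ is then immediate: $\CC_2(C)$ is the union of those conjugacy classes of involutions $\CC$ with $\CC\cap C\neq\varnothing$, and we have just shown this condition depends only on the two-sided cell.

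The main obstacle is the input $(\diamondsuit_2)$ — the strict sign-definiteness of the leading coefficients $c_{w,\chi}$ for the special character $\chi$ on $C\cap C^{-1}$ — and the fact that the special character appears in $[C]$ for every left cell $C\subseteq\CG$; both are genuinely case-dependent facts (Kazhdan--Lusztig positivity for Weyl groups, explicit computation otherwise), already invoked in the excerpt. Everything else is bookkeeping: checking that conjugate involutions have equal length parity (via the sign character being a class function), and that $\CC\cap\CG\neq\varnothing$ whenever $\CC$ meets any constituent left cell. One should also double-check that when $\CC\cap C=\varnothing$ the right-hand sum of the identity can still be nonzero — but that is fine, the argument only needs the implication ``$\CC\cap C'\neq\varnothing\Rightarrow$ sum $\neq 0\Rightarrow\CC\cap C\neq\varnothing$'', applied in both directions.
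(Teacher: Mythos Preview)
Your proposal is correct and follows essentially the same approach as the paper: apply the $(\CG,C,\CC)$-identity to the unique special character $\chi$, use the constant length-parity on $\CC$ together with the positivity property $(\diamondsuit_2)$ to see that all terms in $\sum_{w\in\CC\cap\CG}c_{w,\chi}$ (and in $\sum_{w\in\CC\cap C}c_{w,\chi}$) have the same strict sign, and conclude. The only cosmetic difference is that the paper derives $\langle[C],\chi\rangle_W\neq 0$ more directly---from $(\diamondsuit_2)$ applied to the distinguished involution $d\in C$ combined with Proposition~\ref{refine2} ($c_{d,\chi}=n_d\langle[C],\chi\rangle_W$)---rather than appealing to connectedness or an external standard fact.
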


\begin{proof} We consider the $(\CG,C,\CC)$-identity in Lemma~\ref{main1} 
with respect to the unique special character $\chi \in \SC(W) \cap 
\Irr_{\CG}(W)$. Since the sign character of $W$ is constant on $\CC$, we
can write this identity in the form: 
\[ \langle [C],\chi\rangle_W\sum_{w \in \CC \cap \CG} (-1)^{\ell(w)}
c_{w,\chi}=\chi(1) \sum_{w \in \CC \cap C} (-1)^{\ell(w)}
c_{w,\chi} .\]
Multiplying both sides by $(-1)^{\ab_{\chi}}$, we obtain:
\[ \langle [C],\chi\rangle_W\sum_{w \in \CC \cap \CG} 
(-1)^{\ab_{\chi}+\ell(w)} c_{w,\chi}=\chi(1) 
\sum_{w \in \CC \cap C} (-1)^{\ab_{\chi}+\ell(w)}c_{w, \chi}.\]
By ($\diamondsuit_2$), we have $c_{d,\chi}\neq 0$ and so 
$\langle [C], \chi\rangle_W \neq 0$; see Proposition~\ref{refine2}. 
Thus, we obtain  
\[ \sum_{w \in \CC \cap \CG} (-1)^{\ab_{\chi}+\ell(w)} 
c_{w,\chi}= \frac{\chi(1)}{\langle [C],\chi \rangle_W} \sum_{w \in \CC 
\cap C} (-1)^{\ab_{\chi}+\ell(w)} c_{w,\chi}.\]
Let us denote by $\Upsilon(\CC,C)$ the expression on the right hand
side of this identity. Since the left hand side does not depend on 
$C$, we have $\Upsilon(\CC,C)=\Upsilon(\CC,C')$. Consequently, we have 
\[\sum_{w \in \CC \cap C} (-1)^{\ab_{\chi_{\cG}}+\ell(w)}c_{w,\chi}
\neq 0 \qquad \Longleftrightarrow \qquad \sum_{w \in \CC \cap C'} 
(-1)^{\ab_{\chi_{\cG}}+\ell(w)}c_{w,\chi} \neq 0.\]
Finally, by ($\diamondsuit_2$), we have 
\[(-1)^{\ab_{\chi}+\ell(w)} c_{w,\chi}>0 \qquad \mbox{for 
all $w \in \CC \cap C$ and for all $w \in \CC \cap C'$}.\]
Thus, the left hand side of the above equivalence is non-zero 
if and only if $\CC \cap C\neq \varnothing$, and, similarly, the right
left hand side is non-zero if and only if $\CC \cap C'\neq \varnothing$. 
\end{proof}

\medskip

\begin{defi} \label{exchar} A character $\chi \in \Irr(W)$ is 
called {\it exceptional} if there exists some $w \in W$ such that
$c_{w,\chi}\neq 0$ and $\ab_\chi\not\equiv \ell(w) \bmod 2$.
\end{defi}

\medskip

\begin{rema} \label{remeq} One easily checks that there is a well-defined 
ring homomorphism $\alpha \colon \HC \rightarrow \HC$ such that $\alpha(v)
=-v$ and $\alpha(r)=r$ for all $r \in \RM$ and $\alpha(T_w)=(-1)^{\ell(w)}T_w$ 
for all $w \in W$. (See Lusztig \cite[3.2]{Lu2}.) Now, for $\chi \in 
\Irr(W)$, we have $\chi_\ph(T_w) \in {\RM}[v,v^{-1}]$ for all $w \in W$. 
Composing the action of $\HC$ on a representation affording $\chi_\ph$ 
with $\alpha$, we see that there is a well-defined $\tilde{\chi}\in 
\Irr(W)$ such that 
\[ \tilde{\chi}_\ph(T_w)=(-1)^{\ell(w)}\chi_\ph(T_w)\,\big|_{v\mapsto -v} 
\qquad \mbox{for all $w \in W$}.\]
By the definition of $\ab_\chi$ and $c_{w,\chi}$, this implies that
\[ \ab_{\tilde{\chi}}=\ab_{\chi} \qquad\mbox{and} \qquad 
c_{w,\tilde{\chi}}=c_{w,\chi}^*\quad \mbox{for all $w \in W$}.\]
Thus, $\chi$ is exceptional if and only if $\chi \neq \tilde{\chi}$.
Using Corollary~\ref{refine0} we see that, for a two-sided cell $\CG$ 
of $W$, we have 
\[ \chi \in \Irr_{\GC}(W) \qquad \Longleftrightarrow \qquad \tilde{\chi}
\in \Irr_{\CG}(W).\]
Note that there do exist cases for which $\chi \neq \tilde{\chi}$.
For example, let $(W,S)$ be of type $E_7$. Then, by \cite[5.22.2]{LuB},
there exists an involution $x \in W$ such that $c_{x,\chi}\neq 0$ 
and $\ab_\chi\not\equiv \ell(x) \bmod 2$ for the special character denoted 
$\chi=512_a'$. In type $E_8$, examples are given by the special characters 
$4096_z$ and $4096_x'$; see \cite[5.23.2]{LuB}.
\end{rema}

\medskip

\begin{exemple} \label{remeq1} Assume that $(W,S)$ is irreducible. By the 
previous remark we see that, if $v^{\ell(w)}\chi(T_w) \in \RM[v^2]$ for all 
$w \in W$, then $\chi$ is non-exceptional. So, by \cite[Example~9.3.4]{gepf},
all $\chi\in \Irr(W)$ are non-exceptional unless $(W,S)$ is of type $H_3$, 
$H_4$, $E_7$, $E_8$ and $\chi$ is one of the characters listed in 
\cite[Example~9.2.3]{gepf}. (This list includes the characters $512_a'$,
$4096_z$, $4096_x'$ already mentioned in Remark~\ref{remeq}.) The degree 
of such an exceptional character is a power of~$2$; furthermore, we have 
$v^{\ell(w_0)}\chi(T_{w_0}) \not\in \RM[v^2]$ where $w_0 \in W$ is the 
longest element. 

In particular, if $(W,S)$ is of classical type, then all $\chi \in \Irr(W)$ 
are non-exceptional.
\end{exemple}

\medskip

\begin{exemple} \label{class1} Assume that $(W,S)$ is irreducible and of
classical type. (Also recall that we are in the equal parameter case). Let 
$\CG,C,\CC$ be as in Lemma~\ref{main1}. Let $\chi \in \SC(W)$ be the unique 
special character in $\Irr_{\CG}(W)$. Then we claim that 
\[ |\CC \cap \CG|=\chi(1) |\CC \cap C|.\]
This is seen as follows. As already noted in the proof of 
Proposition~\ref{eqpa1}, we have $\langle [C],\chi\rangle_W \neq 0$. 
By \cite[12.13]{LuB}, every left cell module for $W$ is 
multiplicity--free and so $\langle [C],\chi \rangle_W =1$. Consequently,
the $(\CG,C,\CC)$-identity in Lemma~\ref{main1} reduces to: 
\[ \sum_{w \in \CC \cap \CG} c_{w,\chi}= \chi(1)\sum_{w \in \CC \cap C} 
c_{w,\chi}.\]
So it remains to show that 
\[ c_{w,\chi}=c_{w,\chi}^*=1 \qquad \mbox{for all $w \in \CC \cap \CG$}.\] 
Now, the first equality holds since $\chi$ is non-exceptional; see 
Example~\ref{remeq1}. Furthermore, by \cite[3.10(b)]{lulc}, we have 
$c_{w,\chi}\in \{0,\pm 1\}$ for all $w \in W$. Hence, the second equality 
immediately follows from ($\diamondsuit_2$).

In particular, the equality $|\CC \cap \CG|=\chi(1) |\CC \cap C|$ 
shows that the cardinality $|\CC \cap C|$ does not depend on $C$. This 
phenomenon is related to a conjecture of Kottwitz \cite{kottwitz}, which
we shall now explain.
\end{exemple}

\medskip
\begin{defi}[\protect{\cite{kottwitz}, \cite{luvo}, \cite{luvo1}}] 
\label{defko} Let $\CC$ be a union of conjugacy classes 
of involutions in $W$. Let $V_{\CC}$ be an $\RM$-vector space with a basis
$\{a_w \mid w \in \CC\}$. Then, by \cite[6.3]{luvo} and \cite{luvo1}, there 
is a linear action of $W$ on $V_{\CC}$ such that, for any $s \in S$ and 
$w \in \CC$, we have 
\[ s.a_w=\left\{\begin{array}{cl} -a_w & \qquad \mbox{if $sw=ws$ and
$\ell(sw)<\ell(w)$},\\ a_{sws} & \qquad \mbox{otherwise}.\end{array}\right.\]
Let $\rho_{\CC}$ denote the character of this representation of $W$ on 
$V_{\CC}$. 
\end{defi}

\medskip

\begin{conjecture}[Kottwitz \protect{\cite[\S 1]{kottwitz}}] \label{coko}
Let $\CC$ be a union of conjugacy classes of involutions and $C$ be a 
left cell of $W$. Then $\langle \rho_{\CC}, [C]\rangle_W=|\CC \cap C|$.
\end{conjecture}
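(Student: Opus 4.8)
\medskip

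\noindent\emph{Sketch of the proof.} Conjecture~\ref{coko} seems out of reach in general; the plan below establishes it whenever $W$ is of classical type, which together with the cases already settled in the exceptional types leaves only type $E_8$. First I would reduce to the essential case. Both sides of $\langle \rho_{\CC},[C]\rangle_W=|\CC\cap C|$ are multiplicative over direct products of Coxeter systems, so we may assume $W$ irreducible, and they are additive in $\CC$ (for $\rho_\CC$ this is immediate from Definition~\ref{defko}), so it suffices to treat a single conjugacy class $\CC$ of involutions. Fix a two-sided cell $\CG$ and let $\chi_\CG$ be the unique special character in $\SC(W)\cap\Irr_\CG(W)$. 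As $W$ is of classical type, every left cell module $[C]$ with $C\subseteq\CG$ is multiplicity-free and satisfies $\langle [C],\chi_\CG\rangle_W=1$, so Example~\ref{class1} applies and gives $|\CC\cap C|=|\CC\cap\CG|/\chi_\CG(1)$ for every left cell $C\subseteq\CG$; in particular this number is independent of $C$ (this also proves part~(c) of the Theorem). It remains to prove that $\langle \rho_\CC,[C]\rangle_W$ equals this common value.

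Expanding $[C]=\sum_{\chi\in\Irr_\CG(W)}\langle [C],\chi\rangle_W\,\chi$, the problem reduces to two combinatorial inputs: (i) the multiplicities $\langle [C],\chi\rangle_W$ of the irreducible characters in the cell modules attached to $\CG$; and (ii) the decomposition multiplicities $\langle \rho_\CC,\chi\rangle_W$ of the involution module of Definition~\ref{defko}. In type $A_{n-1}$ every two-sided cell is smooth, so $[C]=\chi_\CG$ is irreducible and the identity reduces to $\langle \rho_\CC,\chi_\CG\rangle_W=|\CC\cap C|$; this case is elementary and closely tied to Sch\"utzenberger's theorem~\cite{sch} (see also Example~\ref{remsn1}). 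In types $B_n$ and $D_n$, for (i) I would use Lusztig's description of the $W$-module structure of the left cells in~\cite[Chap.~12]{LuB}, together with the parametrisation of Kazhdan--Lusztig cells by domino tableaux (Garfinkle, Barbasch--Vogan) and Lusztig's symbol combinatorics; for (ii) I would compute $\rho_\CC$ directly from the $W$-action in Definition~\ref{defko}, using that the conjugacy classes of involutions in $W(B_n)$ (resp.\ $W(D_n)$) are parametrised by the numbers of negative fixed points, of positive $2$-cycles and of negative $2$-cycles (subject to a parity condition, and with some classes splitting, in type $D_n$). What makes the two sides match is that in classical type the finite group $G_\CG$ attached to the family $\Irr_\CG(W)$ has the form $(\ZM/2\ZM)^{k}$ (see~\cite[Chap.~4]{LuB}), so that both collections of multiplicities are governed by the same elementary Fourier transform over $G_\CG$ and the comparison becomes a finite check, family by family. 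As an intermediate reduction, note that summing the desired identity over all $C\subseteq\CG$ gives $\langle \rho_\CC,\sum_{C\subseteq\CG}[C]\rangle_W=|\CC\cap\CG|$, and since $\sum_{C\subseteq\CG}[C]=\sum_{\chi\in\Irr_\CG(W)}\chi(1)\,\chi$ (which follows from Proposition~\ref{refine1} and the orthogonality relations of Section~\ref{sec:lc}), this is equivalent to $\sum_{\chi\in\Irr_\CG(W)}\chi(1)\langle \rho_\CC,\chi\rangle_W=|\CC\cap\CG|$; the genuine work is to refine this from the two-sided cell to each individual left cell.

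The main obstacle will be the combinatorics of type $D_n$: passing from $W(B_n)$ to its index-two subgroup $W(D_n)$ makes certain conjugacy classes of involutions split and, on the other side, certain cells and certain irreducible characters split as well (the ``degenerate'' symbols), and one must check that these splittings are compatible with the identity. I would isolate this as a separate combinatorial lemma, proved by a careful analysis of the domino Robinson--Schensted correspondence together with Lusztig's parametrisation of the characters and cells of $W(D_n)$; this is the material to be developed in Sections~\ref{section:kottBn} and~\ref{section:kottDn} and in~\cite{tkott}. By comparison, type $B_n$ and the reduction of the general statement to a single family are routine once the dictionaries in (i) and (ii) are in place.
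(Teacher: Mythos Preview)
Your sketch diverges substantially from the route the paper takes, and it also misses the key simplification that makes the paper's argument work.

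The paper does \emph{not} compute the full decomposition of $[C]$ into irreducibles, nor does it invoke domino tableaux, the Barbasch--Vogan/Garfinkle parametrisation, or any Fourier transform on $G_\CG$. The crucial observation (quoted in Examples~\ref{kobn} and~\ref{kodn} from Kottwitz~\cite{kottwitz}) is that in classical type every irreducible constituent of $\rho_\CC$ is \emph{special}. Since $[C]$ contains the unique special character $\chi_0\in\Irr_\CG(W)$ with multiplicity~$1$, this immediately gives $\langle\rho_\CC,[C]\rangle_W=\langle\rho_\CC,\chi_0\rangle_W$, and the problem collapses to a statement about a single explicit multiplicity, not about the whole cell module. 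Your plan to use Lusztig's Chapter~12 description of $[C]$ is therefore far more than is needed.

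For the remaining identity $\langle\rho_\CC,\chi_0\rangle_W=|\CC\cap C|$, the paper proceeds by \emph{induction on $n$} through the cuspidal/non-cuspidal dichotomy of two-sided cells (Definition~\ref{cuspfam}). For strongly non-cuspidal $\CG$ it applies the reduction criterion of Lemma~\ref{kott0}, verifying conditions (K1)--(K3) via Kottwitz's explicit binomial formula for $\langle\rho_{l,j},\chi\rangle$ (Example~\ref{kobn}(b)) together with the combinatorics of symbols under truncated induction (Remark~\ref{remkoB}(c)); the inequalities in (K3) are then upgraded to equalities using Marberg's Theorem~\ref{coko1}. The (at most one) cuspidal two-sided cell in $W_n$ is handled at the end by a global counting argument over all two-sided cells. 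None of this inductive machinery, nor the essential appeal to Marberg's result, appears in your outline.

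Your proposal to match the two sides ``family by family'' via the Fourier transform on $(\ZM/2\ZM)^k$ is not something the paper does, and as stated it is not a proof: you would have to identify, for each family, both $\langle\rho_\CC,\chi\rangle$ and $|\CC\cap C|$ with values of specific functions on $\MC(G_\CG)$ and then exhibit the matching --- a programme that is plausible but whose execution is exactly the hard part. Likewise, your treatment of the $D_n$ splitting is correct in spirit, but the paper's actual resolution (Proposition~\ref{signD}) pins down the signs $\nu_\alpha$ by an inductive argument using the branching rule of Lemma~\ref{solveD2} and the inequality of Lemma~\ref{indko1}, together with the twisted-involution input from~\cite{tkott}; a direct domino argument is not given.
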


\medskip

\begin{rema} \label{remako} The fact that $\rho_{\CC}$ indeed is equal 
to the character originally constructed in \cite{kottwitz} is shown in 
\cite[Rem.~2.2]{gema}. Note also that, if $\CC=\CC_1 \amalg \ldots 
\amalg \CC_r$ is the partition of $\CC$ into conjugacy classes, then we 
certainly have 
\[ \rho_{\CC}=\rho_{\CC_1}+\cdots +\rho_{\CC_r}.\]
Hence, it is sufficient to prove the above conjecture for the case
where $\CC$ is a single conjugacy class of involutions.
\end{rema}

A strong support is provided by the following general result.

\medskip

\begin{theo}[Marberg \protect{\cite[1.7]{marberg}}]  \label{coko1} Let
$\Ib$ denote the set of all involutions in $W$. Then $\langle \rho_{\Ib}, 
[C]\rangle_W=|\Ib \cap C|$ for every left cell $C$ in $W$.
\end{theo}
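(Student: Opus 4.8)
The plan is to deduce this from Proposition~\ref{myinv} by way of a Hecke-algebra deformation of the Kottwitz representation. By Proposition~\ref{myinv}, for any left cell $C$ one has $|\Ib\cap C|=\sum_{\chi\in\Irr(W)}\langle\isomorphisme{C},\chi\rangle_W$ (the number of constituents of $\isomorphisme{C}$), while $\langle\rho_\Ib,\isomorphisme{C}\rangle_W=\sum_{\chi\in\Irr(W)}\langle\rho_\Ib,\chi\rangle_W\,\langle\isomorphisme{C},\chi\rangle_W$; so the assertion is that $\rho_\Ib$ and the virtual character $\sum_\chi\chi$ have equal multiplicity in $\isomorphisme{C}$ for every left cell $C$. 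It should be stressed that this is \emph{not} the identity $\rho_\Ib=\sum_\chi\chi$: already for $W$ of type $B_2$ one computes that $\rho_\Ib$ is the sum of the trivial character, the sign character and \emph{twice} the $2$-dimensional irreducible character, the two remaining linear characters being absent --- whereas $\sum_\chi\chi$ contains each irreducible once. The excesses and deficits in the $\langle\rho_\Ib,\chi\rangle_W$ only cancel against the $\langle\isomorphisme{C},\chi\rangle_W$ two-sided cell by two-sided cell, so the proof must use genuine information about the $W$-module $V_\Ib$, not merely its dimension.

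To get hold of that information I would lift $V_\Ib$ to an $\HC$-module. Following Lusztig--Vogan (and, in the language of quasiparabolic sets, Rains--Vazirani and Marberg), let $M$ be the free $A$-module on a basis $\{a_w\mid w\in\Ib\}$, on which $\HC$ acts by the $v$-deformation of the rule of Definition~\ref{defko}: for $s\in S$ and $w\in\Ib$, $T_sa_w$ is an $A$-combination of $a_{sws}$ and $a_w$, equal to $a_{sws}$ up to a multiple of $a_w$ when $sws\neq w$, and equal to $v^{\ph(s)}a_w$ or $-v^{-\ph(s)}a_w$ when $s$ commutes with $w$ (according to the sign of $\ell(sw)-\ell(w)$), the rules being so arranged that $M$ is a genuine $\HC$-module (this is the content of the Lusztig--Vogan construction) with $v\mapsto 1$ specialisation the Kottwitz representation. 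As $K\HC$ is split semisimple, Tits' deformation theorem then gives that $KM$, transported to $W$, affords $\rho_\Ib$; in particular $\langle\rho_\Ib,\chi\rangle_W$ is the multiplicity of the simple $K\HC$-module labelled by $\chi$ in $KM$.

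Now equip $M$ with its bar involution and with the associated canonical basis $\{c^\Ib_w\mid w\in\Ib\}$, characterised by $\overline{c^\Ib_w}=c^\Ib_w$ and $c^\Ib_w\equiv a_w$ modulo $\bigoplus_{w'\in\Ib}A_{<0}a_{w'}$ (Lusztig--Vogan; Marberg in the quasiparabolic formalism). This gives a preorder on $\Ib$, hence a partition of $\Ib$ into ``$\Ib$-cells'' $\Gamma$ and a filtration of $M$ by $\HC$-submodules whose subquotients $M_\Gamma$ are free over $A$ with basis indexed by $\Gamma$. The crucial structural step is the analogue, with $\HC$ replaced by $M$, of Proposition~\ref{lem:gamma-smooth}: for every $\Ib$-cell $\Gamma$ and every left cell $C$ of $W$,
\[ \langle [M_\Gamma],\isomorphisme{C}\rangle_W=|\Gamma\cap C|, \]
where $[M_\Gamma]$ denotes the $W$-character of $M_\Gamma$ at $v\mapsto 1$ (there is no ``$C^{-1}$'' here, since involutions are self-inverse). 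Granting this and summing over the $\Ib$-cells, which partition $\Ib$, yields $\langle\rho_\Ib,\isomorphisme{C}\rangle_W=\sum_\Gamma|\Gamma\cap C|=|\Ib\cap C|$, which is the theorem; summing in addition over all left cells $C$ one recovers the identity $|\Ib|=\sum_\CG|\CG_{(2)}|=\sum_\CG\sum_{\chi\in\Irr_\CG(W)}\chi(1)$ noted after Lemma~\ref{lem:smooth}, a useful consistency check.

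The main obstacle is exactly the displayed structural step. Its proof should run parallel to that of Proposition~\ref{lem:gamma-smooth}: one sets up the asymptotic (``$J$-ring'') apparatus attached to the canonical basis of $M$, establishes the required positivity and degree bounds for its structure constants --- here governed by the combinatorics of the involution length on $\Ib$ and by the quasiparabolic axioms --- and invokes Lusztig's Conjectures P (valid in the equal-parameter case; Example~\ref{p115eq}) to identify the $\ab$-function on $\Ib$-cells with the restriction of Lusztig's $\ab$-function, so that the $\Ib$-cells detect the left cells correctly; a substantial part of this is available from the $W$-graph theory of the involution module (Lusztig--Vogan, Marberg, and others). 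In type $A$ the whole argument collapses: there $\rho_\Ib=\sum_\chi\chi$ (the Inglis--Richardson--Saxl Gelfand model), so $\langle\rho_\Ib,\chi\rangle_W=1$, and $\langle\isomorphisme{C},\chi\rangle_W\in\{0,1\}$ for all $\chi$ and $C$; the theorem is then immediate from Proposition~\ref{myinv}. The $B_2$ example above shows that these simplifications are unavailable in general --- which is what makes the theorem (and, a fortiori, the per-conjugacy-class refinements obtained in the rest of the paper) substantial.
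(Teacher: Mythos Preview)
The paper does not give its own proof of this statement: it is quoted as a theorem of Marberg \cite[1.7]{marberg} and used as input. So there is no in-paper argument to compare against; the question is whether your sketch is a viable route to Marberg's result.

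Your overall strategy --- lift $V_\Ib$ to the Lusztig--Vogan $\HC$-module $M$, pass to its bar-invariant canonical basis, and read off the cell-by-cell multiplicities --- is indeed the framework in which the cited result is proved, and your $B_2$ check is correct and well chosen: it shows that the identity cannot be reduced to a statement about $\rho_\Ib$ alone. The reduction you make at the start, that the theorem is equivalent to the assertion that $\rho_\Ib$ and $\sum_{\chi}\chi$ pair identically with every $[C]$, is also correct and is exactly how the result connects to Proposition~\ref{myinv}.

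That said, the proposal is a programme rather than a proof: you yourself flag the displayed identity $\langle [M_\Gamma],[C]\rangle_W=|\Gamma\cap C|$ as the ``main obstacle'' and only indicate that its proof ``should run parallel'' to that of Proposition~\ref{lem:gamma-smooth}. This step is the entire content of the theorem. Establishing it requires, among other things, showing that the $\Ib$-cells of $M$ are unions of pieces of left cells (compatibility of the two $\ab$-functions), and proving the analogue of the Kazhdan--Lusztig orthogonality used for Proposition~\ref{lem:gamma-smooth} in the asymmetric setting of an $\HC$-module rather than the regular bimodule. None of this is automatic from the existence of a bar involution and a canonical basis, and it is precisely what the Lusztig--Vogan and Marberg papers supply. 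So your sketch correctly identifies the architecture, but as written it defers the substantive work to the literature it is meant to reproduce.
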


\medskip
Already Kottwitz \cite{kottwitz} showed that his conjecture holds in 
type $A_{n-1}$; see Example~\ref{koan} below. The aim of the following
three sections is to deal with types $B_n$ and $D_n$; see Theorems~\ref{kottB} 
and~\ref{kottD}. This will rely in an essential way on the above 
identity in Example~\ref{class1}. As far as the exceptional types are 
concerned, Casselman \cite{cass} has verified the conjecture by explicit 
computation for $F_4$ and $E_6$; in \cite{geck pycox}, this is extended to 
$E_7$. Marberg \cite{marberg} verified the conjecture for the 
non-crystallographic types $H_3$, $H_4$, $I_2(m)$. Thus, the only remaining 
case is type $E_8$, which is currently being considered by A. Halls at the 
University of Aberdeen.

\medskip

\begin{exemple} \label{koan} Let $W=\SG_n$ be of type $A_{n-1}$ with 
generators given by the basic transpositions $s_i=(i,i+1)$ for $1 \leq 
i \leq n-1$. A complete set of representatives of the conjugacy classes 
of involutions is given by the elements
\[\sigma_j:=s_1s_3\cdots s_{2j-1}\in \SG_n \qquad \mbox{where} \qquad
0 \leq 2j\leq n.\]
(Thus, $\sigma_j$ is the product of $j$ disjoint transpositions and
$\sigma_j$ has precisely $n-2j$ fixed points on $\{1,\ldots,n\}$.) Note
that $\sigma_{j}$ has minimal length in its conjugacy class; see 
\cite[3.1.16]{gepf}. Let $\CC_{j}$ be the conjugacy class containing 
$\sigma_{j}$ and write $\rho_{j}= \rho_{\CC_{j}}$. As in
Example~\ref{remsn1}, we write $\Irr(\SG_n)=\{\chi^{\alpha} 
\mid \alpha \vdash n\}$.  Then, by \cite[3.1]{kottwitz}, we have
\begin{equation*}
\langle \rho_{j},\chi^{\alpha}\rangle_{\SG_n}=\delta_{n-2j=t},\tag{a}
\end{equation*}
where $t$ is the number of odd parts of the conjugate partition $\alpha^*$.
In particular, if $\Ib$ denotes the set of all involutions in $\SG_n$, then
\[ \rho_{\Ib}=\sum_{j} \rho_{j}=\sum_{\alpha \vdash n} \chi^{\alpha}.\]
For later reference, we explicitly note the following special case of (a).
Let $\alpha=(1^n)$; then $\chi^\alpha=\varepsilon$ is the sign character 
of $\SG_n$. Then (a) yields:
\begin{equation*}
\langle \rho_{j},\varepsilon\rangle_{\SG_n}=\left\{\begin{array}{cl}
1 & \quad \mbox{if $j=\lfloor n/2 \rfloor$}, \\ 0 & \quad \mbox{otherwise}.
\end{array}\right.\tag{b}
\end{equation*}
We have now all ingredients in place to verify that Kottwitz' Conjecture 
holds. Indeed, first note that the longest element in $\SG_n$ has precisely 
one fixed point on $\{1, \ldots, n\}$ if $n$ is odd, and no fixed point at 
all if $n$ is even. Now let $\alpha \vdash n$ and let $w_{\alpha^*}$ be the 
longest element in the Young subgroup $\SG_{\alpha^*}\subseteq \SG_n$, as 
in Example~\ref{remsn1}.  If $\alpha_1^*,\ldots,\alpha_r^*$ are the 
non-zero parts of $\alpha^*$, then $\SG_{\alpha^*} \cong \SG_{\alpha_1^*}
\times\ldots \times \SG_{\alpha_r^*}$ and so the number of fixed points of 
$w_{\alpha^*}$ on $\{1,\ldots,n\}$ is the number $t$ of odd parts of 
$\alpha^*$. Thus, $w_{\alpha^*}$ is conjugate to $\sigma_{(n-t)/2}$ and so 
we can reformulate (a) as follows. Let $\CC$ be a conjugacy class of 
involutions in $\SG_n$. Then 
\begin{equation*}
\langle \rho_{\CC},\chi^{\alpha}\rangle_{\SG_n}=\left\{\begin{array}{cl}
1 & \qquad \mbox{if $w_{\alpha^*} \in \CC$},\\ 0 & \qquad \mbox{otherwise}.
\end{array}\right.\tag{c}
\end{equation*}
Comparison with Example~\ref{remsn1} now shows that 
Conjecture~\ref{coko} holds in this case.
\end{exemple}

\section{An inductive approach to Kottwitz' Conjecture}
\label{section:kottred}

\medskip
We keep the basic assumptions of the previous section. The results in 
this section will provide some ingredients for an inductive proof of
Kottwitz' Conjecture~\ref{coko}. 

\medskip

Let $\CG$ be a two-sided cell of $W$. We shall say that ``{\it Kottwitz'
Conjecture holds for $\CG$}'' if, for any conjugacy class of involutions
$\CC$ in $W$, we have
\[ \langle \rho_{\CC},[C]\rangle_W=|\CC \cap C| \qquad \mbox{for all
left cells $C \subseteq \CG$}.\]

\medskip
\begin{rema} \label{remw0} Let $w_0 \in W$ be the longest element.
Let $C$ be a left cell of $W$. Then, by \cite[5.14]{LuB}, the set
$C w_0$ also is a left cell and we have 
\[ [Cw_0] =[C] \otimes \varepsilon \qquad \mbox{where} \qquad 
\mbox{$\varepsilon=$ sign character of $W$}.\]
Now let $\CG$ be a two-sided cell. Then $\CG w_0$ also is a two-sided cell
and we have
\[ \Irr_{\CG w_0}(W)=\Irr_{\CG}(W) \otimes \varepsilon:=\{ \chi \otimes
\varepsilon \mid \chi \in \Irr_{\CG}(W)\}.\]
\end{rema}

\medskip
\begin{lem} \label{epsrho} Assume that the longest element $w_0 \in W$
is central in $W$. Let $\CC$ be a union of conjugacy classes of 
involutions in $W$. Then $\CC w_0$ also is a union of conjugacy classes 
of involutions and we have $\rho_{\CC w_0}=\rho_{\CC} \otimes \varepsilon$.
\end{lem}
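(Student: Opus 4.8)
The plan is to exhibit an explicit isomorphism of $W$-modules $V_{\CC}\otimes\varepsilon\longiso V_{\CC w_0}$; since $W$ is finite and the modules are $\RM$-vector spaces, this yields at once the claimed equality $\rho_{\CC w_0}=\rho_{\CC}\otimes\varepsilon$ of characters. The elementary part comes first: since $w_0$ is central and $w_0^2=1$, any involution $w$ satisfies $(ww_0)^2=w^2w_0^2=1$, so $ww_0$ is an involution, and $g(ww_0)g^{-1}=(gwg^{-1})(gw_0g^{-1})=(gwg^{-1})w_0$ for all $g\in W$; hence $\CC w_0$ is stable under conjugation, so it is again a union of conjugacy classes of involutions, and $w\mapsto ww_0$ is a bijection $\CC\longiso\CC w_0$. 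Write $\{a_w\mid w\in\CC\}$ and $\{a_u\mid u\in\CC w_0\}$ for the two bases of Definition~\ref{defko}.

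Next I would record the length fact that makes everything work: if $w\in W$ is an involution and $s\in S$ with $sw\neq ws$, then $\ell(sws)=\ell(w)\pm 2$. Indeed $\ell(sw)=\ell(w)\pm1$, hence $\ell(sws)\in\{\ell(w)-2,\ell(w),\ell(w)+2\}$; and if $\ell(sws)=\ell(w)$ then, using $w^{-1}=w$ (so that $\ell(sw)>\ell(w)\Leftrightarrow w(\alpha_s)>0$), a short computation with the root $w(\alpha_s)$ forces $w(\alpha_s)=\pm\alpha_s$, whence $wsw^{-1}=s_{w(\alpha_s)}=s$, i.e.\ $sw=ws$, contradicting the hypothesis. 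Since $\ell(sws)\equiv\ell(w)\bmod 2$ in any case, it follows that $\lfloor\ell(sws)/2\rfloor=\lfloor\ell(w)/2\rfloor\pm1$. I therefore define a sign function $\epsilon\colon\CC\to\{\pm1\}$ by $\epsilon(w):=(-1)^{\lfloor\ell(w)/2\rfloor}$, so that $\epsilon(sws)=-\epsilon(w)$ whenever $sw\neq ws$.

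Then I would set $\Phi\colon V_{\CC}\otimes\varepsilon\to V_{\CC w_0}$, $\Phi(a_w\otimes 1)=\epsilon(w)\,a_{ww_0}$; this is visibly a linear isomorphism, and $W$-equivariance need only be checked on the generators $s\in S$. Fix $w\in\CC$ and put $u=ww_0$. Using that $w_0$ is central and is the longest element, one has $su=us\Leftrightarrow sw=ws$, $sus=(sws)w_0$, and $\ell(su)=\ell(w_0)-\ell(sw)$, hence $\ell(su)<\ell(u)\Leftrightarrow\ell(sw)>\ell(w)$. Now go through the cases of Definition~\ref{defko}. If $sw\neq ws$, then $s\cdot(a_w\otimes 1)=-a_{sws}\otimes 1$, which $\Phi$ sends to $-\epsilon(sws)\,a_{(sws)w_0}=\epsilon(w)\,a_{(sws)w_0}$, while $s\cdot\Phi(a_w\otimes 1)=\epsilon(w)\,a_{s(ww_0)s}=\epsilon(w)\,a_{(sws)w_0}$; they agree. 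If $sw=ws$ then $sus=ww_0=u$, and $s$ acts on $a_w\in V_{\CC}$ by the sign $-1$ exactly when $\ell(sw)<\ell(w)$, whereas $s$ acts on $a_u\in V_{\CC w_0}$ by the sign $-1$ exactly when $\ell(su)<\ell(u)$, i.e.\ when $\ell(sw)>\ell(w)$ --- the two signs are opposite, and the extra twist $\varepsilon(s)=-1$ in $V_{\CC}\otimes\varepsilon$ exactly compensates, so again $\Phi(s\cdot(a_w\otimes 1))=s\cdot\Phi(a_w\otimes 1)$. Hence $\Phi$ is an isomorphism of $W$-modules and the lemma follows.

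I expect the final case analysis to be the most fiddly part, but it becomes mechanical once the three identities for $u=ww_0$ are written down; the one genuinely substantive input is the equality $\ell(sws)=\ell(w)\pm2$ for involutions, which is exactly what guarantees that the sign function $\epsilon$ is consistent with the operation $w\mapsto sws$ --- without it the naive basis bijection $a_w\mapsto a_{ww_0}$ would fail to be equivariant.
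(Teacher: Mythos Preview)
Your proof is correct and follows essentially the same approach as the paper: exhibit a sign-twisted basis bijection between $V_{\CC}\otimes\varepsilon$ and $V_{\CC w_0}$, using the key fact that $\ell(sws)=\ell(w)\pm 2$ for an involution $w$ with $sw\neq ws$. The only cosmetic differences are that the paper first reduces to a single conjugacy class and uses the sign $(-1)^{(\ell(w)-l_0)/2}$ (with $l_0$ the minimal length in the class), whereas your globally defined sign $(-1)^{\lfloor\ell(w)/2\rfloor}$ avoids that reduction; and the paper cites \cite[1.2.6]{gepf} for the length fact rather than giving your root-theoretic argument.
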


\begin{proof} It is sufficient to prove this in the case where
$\CC$ is a single conjugacy class. Let $l_0:=\min\{\ell(w) \mid w \in \CC\}$.
Then $\ell(w)-l_0$ is even for every $w \in \CC$. So, for any $w \in \CC$, 
there is a well-defined integer $m(w)$ such that $\ell(w)-l_0=2m(w)$. Now
we perform a change of basis in $V_{\CC}$: we set $a_w':=(-1)^{m(w)}a_w$
for $w \in \CC$. Then the action of $W$ on $V_{\CC}$ is given by the
following formulae, where $s \in S$ and $w \in \CC$:
\[ s.a_w'=\left\{\begin{array}{cl} -a_w' & \qquad \mbox{if $sw=ws$ and
$\ell(sw)<\ell(w)$},\\ a_w' & \qquad \mbox{if $sw=ws$ and $\ell(sw)>\ell(w)$},\\
-a_{sws}' & \qquad \mbox{otherwise (that is, if $sw\neq ws$)}.\end{array}
\right.\]
Note that, since $w \in \CC$ is an involution, we have $\ell(sw)>\ell(w)$
if and only if $\ell(ws)>\ell(w)$; hence, if $sw\neq ws$, then $\ell(sws)=\ell(w)\pm 2$ 
(see \cite[1.2.6]{gepf}) and so $a_{sws}'=-a_w'$. Furthermore, it is
well-kwown that $\ell(yw_0)=\ell(w_0)-\ell(y)$ for every $y \in W$. Hence, we
can also write the above formulae in the following form:
\[ s.a_w'=\left\{\begin{array}{cl} a_w' & \qquad \mbox{if $sww_0=ww_0s$ and
$\ell(sww_0)<\ell(ww_0)$},\\ -a_w' & \qquad \mbox{if $sww_0=ww_0s$ and
$\ell(sww_0)>\ell(ww_0)$},\\ -a_w' & \qquad \mbox{otherwise}.\end{array} \right.\]
Tensoring with $\varepsilon$, we see that we obtain exactly the same
formulae as for the action of $W$ on $V_{\CC w_0}$.
\end{proof}

\medskip
\begin{lem} \label{epsrho1} Assume that the longest element $w_0 \in W$
is central in $W$. Let $\CG$ be a two-sided cell. Then Kottwitz'
Conejcture holds for $\CG$ if and only if Kottwitz' Conjecture holds
for $\CG w_0$.
\end{lem}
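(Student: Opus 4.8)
The plan is to transport the Kottwitz identity along the two bijections induced by right multiplication with $w_0$. On left cells, $C\mapsto Cw_0$ is by Remark~\ref{remw0} a bijection from the set of left cells contained in $\CG$ onto the set of left cells contained in the two-sided cell $\CG w_0$, its inverse being $D\mapsto Dw_0$ (since $w_0^2=1$ and $(\CG w_0)w_0=\CG$). On conjugacy classes of involutions, I would use $\CC\mapsto\CC w_0$; here the point to check is that it is well defined, and this is where centrality of $w_0$ enters (beyond its use in Lemma~\ref{epsrho}): for an involution $w$ we have $(ww_0)^2=w^2w_0^2=1$, and $g(ww_0)g^{-1}=(gwg^{-1})w_0$, so $\CC w_0$ is again a single conjugacy class consisting of involutions, and $\CC\mapsto\CC w_0$ is its own inverse because $w_0^2=1$. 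The substantive input, namely $\rho_{\CC w_0}=\rho_{\CC}\otimes\varepsilon$, is already supplied by Lemma~\ref{epsrho}.

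Granting this, I would fix a left cell $C\subseteq\CG$ and a conjugacy class $\CC$ of involutions and rewrite both sides of the Kottwitz identity for $Cw_0$ and $\CC w_0$. For the left-hand side, Remark~\ref{remw0} gives $[Cw_0]=[C]\otimes\varepsilon$ and Lemma~\ref{epsrho} gives $\rho_{\CC w_0}=\rho_{\CC}\otimes\varepsilon$, so
\[ \langle \rho_{\CC w_0},[Cw_0]\rangle_W
= \langle \rho_{\CC}\otimes\varepsilon,\ [C]\otimes\varepsilon\rangle_W
= \langle \rho_{\CC},[C]\rangle_W, \]
the last equality because $\varepsilon\otimes\varepsilon$ is the trivial character. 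For the right-hand side, right multiplication by $w_0$ is a bijection of $W$ carrying $\CC\cap C$ onto $\CC w_0\cap Cw_0$, hence $|\CC w_0\cap Cw_0|=|\CC\cap C|$. Therefore $\langle\rho_{\CC w_0},[Cw_0]\rangle_W=|\CC w_0\cap Cw_0|$ holds if and only if $\langle\rho_{\CC},[C]\rangle_W=|\CC\cap C|$.

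Finally I would let $C$ range over all left cells contained in $\CG$ and $\CC$ over all conjugacy classes of involutions in $W$; by the two bijections above this is the same as letting $Cw_0$ range over all left cells contained in $\CG w_0$ and $\CC w_0$ over all conjugacy classes of involutions. Hence Kottwitz' Conjecture holds for $\CG$ if and only if it holds for $\CG w_0$. I do not expect any real obstacle: the only step demanding a moment of care is verifying that $\CC\mapsto\CC w_0$ indeed preserves the property of being a conjugacy class of involutions, which is immediate from the centrality of $w_0$; everything else is a formal combination of Remark~\ref{remw0} and Lemma~\ref{epsrho}.
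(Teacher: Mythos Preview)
Your proof is correct and follows essentially the same approach as the paper: both arguments transport the Kottwitz identity via the bijections $C\mapsto Cw_0$ and $\CC\mapsto\CC w_0$, invoking Remark~\ref{remw0} and Lemma~\ref{epsrho} to identify $\langle\rho_{\CC w_0},[Cw_0]\rangle_W$ with $\langle\rho_{\CC},[C]\rangle_W$ and the obvious bijection to identify $|\CC w_0\cap Cw_0|$ with $|\CC\cap C|$. The only difference is cosmetic: the paper starts from a left cell in $\CG w_0$ and pulls back to $\CG$, while you start in $\CG$ and push forward, and you spell out the verification that $\CC w_0$ is again a conjugacy class of involutions (which the paper already recorded in the statement of Lemma~\ref{epsrho}).
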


\begin{proof} Assume that Kottwitz' Conjecture holds for $\CG$. Let 
$\CC$ be a conjugacy class of involutions in $W$. Let $C$ be a left 
cell contained in $\CG w_0$. Then $C w_0$ is a left contained in $\CG$
and we obtain
\[ \langle \rho_{\CC},[C]\rangle_W=\langle \rho_{\CC} \otimes \varepsilon,
[C]\otimes \varepsilon\rangle_W=\langle \rho_{\CC w_0}, [C w_0]\rangle_W\]
where the last equality holds by Remark~\ref{remw0} and Lemma~\ref{epsrho}.
Now, by assumption, the right hand side equals $|(\CC w_0) \cap (C w_0)|=
|\CC \cap C|$, as desired. The reverse implication is then clear. 
\end{proof}

\medskip

\begin{defi}[\protect{\cite[8.1]{LuB}}] \label{cuspfam} Let $\CG$ be
a two-sided cell in $W$. We say that $\CG$ is {\em strongly non-cuspidal} 
if there exists a proper standard parabolic subgroup $W' \subsetneqq W$ and
a two-sided cell $\CG'$ in $W'$ such that the ``truncated induction''
$\Jrm_{W'}^W$ (as defined in \cite[4.1.7]{LuB}) establishes a 
bijection 
\[ \Irr_{\CG'}(W') \rightarrow \Irr_{\CG}(W), \qquad \chi' \mapsto
\Jrm_{W'}^W(\chi').\]
We say that $\CG$ is {\em non-cuspidal} if $\CG$ or $\CG w_0$ is strongly 
cuspidal (where $w_0 \in W$ is the longest element). Finally, we say 
that $\CG$ is {\em cuspidal} if $\CG$ is not non-cuspidal. 

(Note that, in \cite[8.1]{LuB}, the formulation is in terms of 
``families'' of $\Irr(W)$; however, as already mentioned at the 
beginning of Section~\ref{section:eqpa}, it is known that the sets 
$\Irr_{\CG}(W)$ are precisely the ``families'' of $\Irr(W)$.)
\end{defi}

\medskip

\begin{rema} \label{cusp1} Let $\CG$ be a two-sided cell in $W$ and 
assume that $\CG$ is {\em strongly non-cuspidal}. Let $W',\CG'$ be as
in Definition~\ref{cuspfam}.  Let 
\[ \chi=\Jrm_{W'}^W(\chi') \in \Irr_{\CG}(W) \qquad \mbox{where}
\qquad \chi' \in \Irr_{\CG'}(W').\]
By the definition of the truncated induction, we have $\ab_{\chi}=
\ab_{\chi'}$. Using \cite[4.1.6]{LuB}, one easily sees that also 
$f_{\chi'}=f_{\chi'}$.  In particular, $\CG$ is smooth if and only if
$\CG'$ is smooth (see Lemma~\ref{lem:smooth}).
\end{rema}

\medskip

\begin{lem} \label{kott0} Let $\CG$ be a strongly non-cuspidal two-sided 
cell in $W$. Let $W',\CG'$ be as in Definition~\ref{cuspfam}. Then
Kottwitz's Conjecture holds for $\CG$ if the following three conditions 
are satisfied.
\begin{itemize}
\item[(K1)] For any conjugacy class of involutions $\CC$ in $W$ and any 
left cells $C_1,C_2\subseteq \CG$ such that $[C_1]=[C_2]$, we have 
$|\CC \cap C_1|= |\CC\cap C_2|$.
\item[(K2)] Kottwitz's Conjecture holds for the two-sided cell $\CG'$ in $W'$.
\item[(K3)] For any conjugacy class of involutions $\CC$ in $W$ such that
$\CC \cap W' \neq \varnothing$, we have 
\[\langle \rho_{\CC\cap W'},\chi'\rangle_{W'}\leq \big\langle \rho_{\CC},
\Jrm_{W'}^W (\chi') \big\rangle_{W} \qquad \mbox{for all $\chi'\in 
\Irr_{\CG'}(W')$}.\]
 \end{itemize}
\end{lem}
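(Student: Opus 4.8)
The plan is to prove, for every conjugacy class of involutions $\CC$ in $W$ (the general case of a union of classes follows by additivity, see Remark~\ref{remako}) and every left cell $C\subseteq\CG$, the inequality $\langle\rho_{\CC},[C]\rangle_W\geq|\CC\cap C|$, and then to upgrade it to an equality by a global counting argument. For the latter, fix $C\subseteq\CG$ and sum over all conjugacy classes $\CC$ of involutions in $W$: since $\sum_{\CC}\rho_{\CC}=\rho_{\Ib}$ by Remark~\ref{remako} and $\Ib\cap C=\coprod_{\CC}(\CC\cap C)$, we obtain
\[
\sum_{\CC}\langle\rho_{\CC},[C]\rangle_W=\langle\rho_{\Ib},[C]\rangle_W=|\Ib\cap C|=\sum_{\CC}|\CC\cap C|,
\]
the middle equality being Marberg's Theorem~\ref{coko1}. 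As each left-hand summand dominates the corresponding right-hand summand and the totals coincide, equality must hold term by term, which is exactly Kottwitz' Conjecture for $\CG$.

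To prove the inequality I would exploit that both sides depend on $C$ only through the character $[C]$ — the left side trivially, the right side by hypothesis (K1) — so it is enough to verify it for one well-chosen left cell in each isomorphism type. Since $\CG$ is strongly non-cuspidal via $(W',\CG')$, Lusztig's theory of truncated induction and constructible characters (\cite[Chapters~5 and 12]{LuB}), combined with the theory of induction of Kazhdan--Lusztig cells, provides, for the given value of $[C]$, a left cell $C_0$ of $W$ contained in $\CG$ with $[C_0]=[C]$ together with a left cell $C_0'$ of $W'$ contained in $\CG'$ such that $[C_0]=\Jrm_{W'}^W([C_0'])$ (read additively via the bijection $\Irr_{\CG'}(W')\longiso\Irr_{\CG}(W)$), with $C_0\cap W'=C_0'$ and with all involutions of $C_0$ lying in $W'$; in particular $|\CC\cap C_0|=|\CC\cap C_0'|$ for every conjugacy class of involutions $\CC$ in $W$.

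Granting this, write $[C_0']=\sum_{\chi'}m_{\chi'}\chi'$ with $m_{\chi'}\in\NM$ and $\chi'$ running over $\Irr_{\CG'}(W')$; then $[C_0]=\sum_{\chi'}m_{\chi'}\,\Jrm_{W'}^W(\chi')$, the $\Jrm_{W'}^W(\chi')$ being pairwise distinct irreducible characters of $W$, and therefore
\[
\langle\rho_{\CC},[C]\rangle_W=\langle\rho_{\CC},[C_0]\rangle_W=\sum_{\chi'}m_{\chi'}\langle\rho_{\CC},\Jrm_{W'}^W(\chi')\rangle_W\geq\sum_{\chi'}m_{\chi'}\langle\rho_{\CC\cap W'},\chi'\rangle_{W'}=\langle\rho_{\CC\cap W'},[C_0']\rangle_{W'},
\]
where the inequality is hypothesis (K3) when $\CC\cap W'\neq\varnothing$ and is immediate when $\CC\cap W'=\varnothing$ (then $\rho_{\CC\cap W'}=0$ and the left side is a nonnegative integer). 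Because $\CC\cap W'$ is a union of conjugacy classes of involutions in $W'$ and $C_0'\subseteq\CG'$, hypothesis (K2) gives $\langle\rho_{\CC\cap W'},[C_0']\rangle_{W'}=|(\CC\cap W')\cap C_0'|=|\CC\cap C_0'|$, which equals $|\CC\cap C_0|$ by the structural input and hence $|\CC\cap C|$ by (K1). This establishes the inequality, and the first paragraph completes the proof.

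The hard part is the structural input in the second paragraph: that truncated induction is compatible with the passage to left cells, and that the resulting cell $C_0$ of $W$ meets $W'$ exactly in $C_0'$ and has all of its involutions inside $W'$, so that $C_0$ and $C_0'$ share the same involutions. The compatibility of $\Jrm_{W'}^W$ with cell characters is classical; the statement about involutions is the delicate point, and in the situations where this lemma will be used — the two-sided cells of types $B_n$ and $D_n$ — its verification rests on Lusztig's explicit combinatorial description of left cells and cell modules for classical groups and forms the technical core of the sections that follow. The remaining ingredients — the reduction to a single conjugacy class, the pointwise bound together with Marberg's theorem, and the additive manipulations with $\Jrm_{W'}^W$ — are routine.
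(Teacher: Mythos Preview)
Your overall architecture --- prove $\langle\rho_{\CC},[C]\rangle_W\geq|\CC\cap C|$ for each conjugacy class $\CC$ and each left cell $C\subseteq\CG$, then sum over $\CC$ and invoke Marberg's Theorem~\ref{coko1} to force equality --- is exactly what the paper does, and your deployment of (K1), (K2), (K3) for the inequality follows the paper's line as well.

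Where you diverge is in your assessment of the ``hard part''. You defer the claim that all involutions of the induced cell $C_0$ already lie in $C_0'$ to a case-by-case combinatorial verification in types $B_n$ and $D_n$. The paper, however, proves this step in complete generality inside the proof of the lemma, by a short counting argument. Here it is. Let $C_1$ be the left cell of $W$ containing $C'$ (your $C_0'$); then $[C_1]=\Jrm_{W'}^W([C'])$ by the classical compatibility you cite. Since $\CG$ is strongly non-cuspidal, $\Jrm_{W'}^W$ is by definition a \emph{bijection} $\Irr_{\CG'}(W')\to\Irr_{\CG}(W)$, so $[C']$ and $[C_1]$ have the same number of irreducible constituents counted with multiplicity. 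By Proposition~\ref{myinv} this number equals the number of involutions in $C'$, respectively in $C_1$. As $C'\subseteq C_1$, every involution of $C_1$ therefore lies in $C'$, and $\CC\cap C'=\CC\cap C_1$ for every $\CC$. No combinatorics specific to classical types is required.

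Two smaller remarks. First, your stronger assertion $C_0\cap W'=C_0'$ is neither needed nor claimed in the paper; only the inclusion $C_0'\subseteq C_0$ (standard cell induction) together with the involution count above is used. Second, the paper also needs the fact, which you take for granted, that for \emph{every} left cell $C\subseteq\CG$ there is some $C'\subseteq\CG'$ with $[C]=\Jrm_{W'}^W([C'])$; this is a genuine input from \cite{lu3} (see also \cite[Lemma~5.6]{geck plus}) and should be cited rather than folded into ``Lusztig's theory''.
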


\begin{proof} Let $\CC$ be any conjugacy class of involutions in $W$. First
we show that 
\begin{equation*}
\langle \rho_{\CC},[C]\rangle_W \geq |\CC \cap C| \qquad \mbox{for all
left cells $C \subseteq \CG$}.\tag{$*$}
\end{equation*}
Indeed, let $C$ be a left cell of $W$ which is contained in $\CG$. 
If $\CC \cap  C=\varnothing$, then ($*$) is obvious. Now assume that  
$\CC \cap  C\neq \varnothing$. By \cite[\S 3]{lu3} (see also 
\cite[Lemma~5.6]{geck plus}), there exists a left cell $C'$ of $W'$
which is contained in $\CG'$ and such that $[C]=\Jrm_{W'}^W([C'])$. 
So we have
\[ \langle \rho_{\CC},[C]\rangle_W=\langle \rho_{\CC},\Jrm_{W'}^W([C'])
\rangle_W.\]
Using now (K2) and (K3), we obtain 
\[ \langle \rho_{\CC},\Jrm_{W'}^W([C'])\rangle_W\geq
\langle \rho_{\CC \cap W'},[C']\rangle_{W'}=|(\CC \cap W')\cap C'|=
|\CC \cap C'|.\] 
On the other hand, let $C_1$ be the left cell of $W$ such that $C'
\subseteq C_1$. Then we also have $[C_1]=\Jrm_{W'}^W([C'])$; see 
\cite[5.28]{LuB} (or the argument in the proof of Case~1 in 
\cite[Lemma~22.2]{lusztig}). 
Now, since $[C_1]=\Jrm_{W'}^W([C'])$ and since $\Jrm_{W'}^W$ establishes
a bijection between $\Irr_{\CG'}(W)$ and $\Irr_{\CG}(W)$, we conclude that
$[C']$ and $[C_1]$ have the same number of irreducible constituents 
(counting multiplicities). Hence, by Proposition~\ref{myinv}, $C'$ and 
$C_1$ contain the same number of involutions. Consequently, since 
$C' \subseteq C_1$, all the involutions in $C_1$ must be contained in $C'$ 
and so $\CC \cap C'=\CC \cap C_1$. In particular, this shows that 
$|\CC \cap C'| =|\CC\cap C_1|=|\CC\cap C|$, where the second equality holds 
by (K1).  Thus, ($*$) is proved. Once this is established, it actually 
follows that we must have equality in ($*$). Indeed, let $\CC_1,\ldots,
\CC_m$ be the conjugacy classes of involutions in $W$; then $\Ib=\CC_1 \cup 
\ldots \cup \CC_m$ is the set of all involutions in $W$. By ($*$), we have
\[ \langle \rho_{\Ib},[C]\rangle_W=\sum_{1\leq i \leq m}
\langle \rho_{\CC_i},[C]\rangle_W\geq \sum_{1\leq i \leq m} |\CC_i \cap C|
=|\Ib\cap C|.\]
However, by Theorem~\ref{coko1}, we know that the left hand side equals the
right hand side. Hence, all the inequalities in ($*$) must be equalities, 
as claimed. Thus, Kottwitz's Conjecture holds for $\CG$. 
\end{proof}

\medskip
\begin{rema} \label{indko} We note that an analogous version of the 
inequality in (K3) always holds where $\Jrm_{W'}^W(\chi')$ is replaced 
by $\Ind_{W'}^W(\chi')$. In fact, for any parabolic subgroup $W' 
\subseteq W$ and any conjugacy of involutions $\CC$ in $W$ such that 
$\CC' \cap W' \neq \varnothing$, we have 
\[\langle \rho_{\CC\cap W'},\chi'\rangle_{W'}\leq \big\langle \rho_{\CC},
\Ind_{W'}^W (\chi') \big\rangle_{W} \qquad \mbox{for all $\chi'\in 
\Irr(W')$}.\]
This is seen as follows. Let $V_{\CC}$ be as in Definition~\ref{defko}. From
the formulae for the action of $W$ on $V_{\CC}$, it is clear that the
subspace $U\subseteq V_{\CC}$ spanned by the basis elements $\{a_w \mid
w \in \CC \cap W'\}$ is a $W'$-submodule. Furthermore, the character 
of this $W'$-module is just $\rho_{\CC\cap W'}$. Thus, we can write 
$\Res_{W'}^W(\rho_{\CC})=\rho_{\CC\cap W'}+\psi$ for some character $\psi$
of $W'$. This yields that 
\[\langle\rho_{\CC'\cap W'},\chi'\rangle_{W'} \leq \langle
\Res_{W'}^W(\rho_{\CC}),\chi'\rangle_{W'}\qquad \mbox{for all $\chi'\in 
\Irr(W')$}\]
and so the assertion immediately follows by Frobenius reciprocity.
\end{rema}

\medskip

\begin{lem}  \label{indko1} Let $W'\subseteq W$ be a standard parabolic
subgroup and $\CC'$ be a conjugacy class of involutions in $W'$. Let
$\CC$ be the conjugacy class of $W$ such that $\CC'\subseteq \CC$. Then
\[ \big\langle \rho_{\CC},\chi \big\rangle_W\leq \big\langle\Ind_{W'}^W
(\rho_{\CC'}), \chi\big\rangle_W \qquad \mbox{for all $\chi \in \Irr(W)$}.\]
\end{lem}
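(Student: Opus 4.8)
The plan is to prove the stronger statement that the $\RM W$-module $V_\CC$ of Definition~\ref{defko} is isomorphic to a direct summand of $\Ind_{W'}^W V_{\CC'}$. Since $\RM W$ is semisimple, this gives $\Ind_{W'}^W\rho_{\CC'}=\rho_\CC+\theta$ for some genuine character $\theta$ of $W$, and the claimed inequality $\langle\rho_\CC,\chi\rangle_W\le\langle\Ind_{W'}^W\rho_{\CC'},\chi\rangle_W$ then follows at once for every $\chi\in\Irr(W)$.

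To produce the embedding as a summand, I would first check that the obvious map $\iota\colon V_{\CC'}\to\Res_{W'}^W V_\CC$, $a_w\mapsto a_w$ for $w\in\CC'$, is a homomorphism of $\RM W'$-modules. Indeed, since $W'$ is a standard parabolic subgroup one has $\ell_W(y)=\ell_{W'}(y)$ for all $y\in W'$; hence for $s\in S\cap W'$ and $w\in\CC'$ the conditions ``$sw=ws$'' and ``$\ell(sw)<\ell(w)$'' mean the same thing computed in $W$ or in $W'$, and moreover $sws\in\CC'$, so the defining formula of Definition~\ref{defko} for $s\cdot a_w$ in $V_{\CC'}$ coincides with the one for the action of $s$ on the corresponding basis vector of $V_\CC$. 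By the adjunction between $\Ind_{W'}^W$ and $\Res_{W'}^W$, the map $\iota$ extends uniquely to an $\RM W$-homomorphism $\Phi\colon\Ind_{W'}^W V_{\CC'}\to V_\CC$, $g\otimes v\mapsto g\cdot\iota(v)$.

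The crux, and what I expect to be the main obstacle, is the surjectivity of $\Phi$. The action in Definition~\ref{defko} is monomial: each $s\in S$ sends $a_w$ to $-a_w$ (when $sw=ws$) or to $a_{sws}$ (when $sw\neq ws$), so each $g\in W$ sends $a_w$ to $\pm a_{w'}$ for a single involution $w'$ obtained from $w$ by iterating the operations $w\mapsto sws$ (if $sw\neq ws$) and $w\mapsto w$ (if $sw=ws$). Thus the image of $\Phi$ is the span of all $a_{w'}$ with $w'$ in the $\ast$-orbit of some $w\in\CC'$, and it suffices to show that these $\ast$-orbits are exactly the conjugacy classes of involutions of $W$. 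That a $\ast$-orbit lies inside a single conjugacy class is clear. Conversely, if $w_1,w_2$ are conjugate involutions, write $w_2=g w_1 g^{-1}$ with $g=s_1\cdots s_k$ and induct on $k$: the involution $w_1':=(s_2\cdots s_k)w_1(s_2\cdots s_k)^{-1}$ lies in the $\ast$-orbit of $w_1$ by induction, and $w_2=s_1 w_1' s_1$ equals either $s_1\ast w_1'$ (if $s_1 w_1'\neq w_1' s_1$) or $w_1'$ (if $s_1 w_1'=w_1' s_1$, since then $s_1w_1's_1=w_1'$); in both cases $w_2$ lies in the $\ast$-orbit of $w_1$. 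As $\CC'\subseteq\CC$ and $\CC$ is one such orbit, $\Phi$ is onto $V_\CC$.

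With surjectivity in hand, Maschke's theorem (recall $|W|$ is invertible in $\RM$) would give $\Ind_{W'}^W V_{\CC'}\cong V_\CC\oplus V''$ for some $\RM W$-module $V''$, completing the argument. The only delicate points are the two indicated above — the $\RM W'$-linearity of $\iota$, which hinges on $\ell_W|_{W'}=\ell_{W'}$, and the identification of $\ast$-orbits with conjugacy classes of involutions; one should also take a little care that the ``otherwise'' branch of the formula in Definition~\ref{defko} genuinely returns $a_{sws}$ and that $sws$ stays inside $\CC$ (resp.\ $\CC'$), which is immediate since $\CC$ and $\CC'$ are unions of conjugacy classes. Everything else is formal.
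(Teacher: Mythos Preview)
Your argument is correct. The map $\iota$ is $W'$-equivariant for the reasons you state, and the key observation that $g\cdot a_w=\pm a_{gwg^{-1}}$ for all $g\in W$ (which follows by induction on $\ell(g)$, exactly as you outline) shows that the image of $\Phi$ contains every basis vector of $V_{\CC}$, since $\CC$ is a single $W$-conjugacy class meeting $\CC'$. Semisimplicity then gives the splitting.

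The paper takes a different route. Instead of working with the Lusztig--Vogan monomial realisation of Definition~\ref{defko}, it invokes Kottwitz' original description $\rho_{\CC}=\Ind_{C_W(\sigma)}^W(\varepsilon_\sigma)$ and $\rho_{\CC'}=\Ind_{C_{W'}(\sigma)}^{W'}(\varepsilon_\sigma')$, where $\sigma\in\CC'$ is chosen as the longest element of a standard parabolic subgroup of $W'$ in which it is central, and $\varepsilon_\sigma,\varepsilon_\sigma'$ are explicit sign characters of the centralisers. One checks that $\varepsilon_\sigma'$ is the restriction of $\varepsilon_\sigma$, so $\Ind_{C_{W'}(\sigma)}^{C_W(\sigma)}(\varepsilon_\sigma')=\varepsilon_\sigma+\psi$ for some genuine character $\psi$, and transitivity of induction yields $\Ind_{W'}^W(\rho_{\CC'})=\rho_{\CC}+\Ind_{C_W(\sigma)}^W(\psi)$. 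Your approach is more self-contained: it stays entirely within the monomial model and avoids citing the identification of that model with Kottwitz' induced-character construction. The paper's approach, on the other hand, produces an explicit description of the complementary summand as an induced character from $C_W(\sigma)$.
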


\begin{proof} We can find a representative $\sigma \in \CC'$ such that 
$\sigma$ is the longest element in a standard parabolic subgroup 
$W''\subseteq W'$ and such that $\sigma$ is central in $W''$; see 
\cite[3.2.10]{gepf}. Then, by Kottwitz' original construction in 
\cite{kottwitz}, we have 
\[ \rho_{\CC}=\Ind_{C_{W}(\sigma)}^W(\varepsilon_\sigma) \qquad 
\mbox{and}\qquad \rho_{\CC'}=\Ind_{C_{W'}(\sigma)}^{W'}
(\varepsilon_\sigma'), \]
where $\varepsilon_\sigma\colon C_W(\sigma) \rightarrow \{\pm 1\}$ and 
$\varepsilon_\sigma'\colon C_{w'}(\sigma) \rightarrow \{\pm 1\}$ are 
certain linear characters. To describe these characters explicitly, let 
$\Phi$ be the root system of $W$; let $\Phi=\Phi^+\amalg \Phi^-$ be the 
decomposition into positive and negative roots (defined by the given set
of generators $S$ of $W$). Let $\Phi''$ be the parabolic subsystem defined 
by $W''$. Then, for any $w \in C_W(\sigma)$, we have $\varepsilon_\sigma(w)=
(-1)^k$ where $k$ is the number of positive roots in $\Phi''$ which are sent
to negative roots by $w$ (see also \cite[Rem.~2.2]{gema}). The definition
of $\varepsilon_\sigma'$ is analogous. By this description, it is clear 
that $\varepsilon_\sigma'$ is the restriction of $\varepsilon_\sigma$ to
$W'$. Hence, we can write 
\[ \Ind_{C_{W'}(\sigma)}^{C_W(\sigma)}(\varepsilon_\sigma')=
\varepsilon_\sigma+\psi\]
for some character $\psi$ of $C_{W}(\sigma)$. By the transitivity of
induction, this yields 
\[ \Ind_{W'}^W(\rho_{\CC'})=\Ind_{C_{W'}(\sigma)}^W(\varepsilon_\sigma')=
\Ind_{C_W(\sigma)}^W \bigl(\varepsilon_\sigma+\psi\bigr)=
\rho_{\CC}+\Ind_{C_W(\sigma)}^W(\psi),\]
which immediately implies the assertion.
\end{proof}


\section{Kottwitz' Conjecture for type $B_n$}\label{section:kottBn}

Throughout this section, let $W=W_n$ be of type $B_n$ with
generators $t,s_1,\ldots,s_{n-1}$ and diagram given as follows.
\begin{center}
\begin{picture}(220,30)
\put( 40, 10){\circle{10}}
\put( 44,  7){\line(1,0){33}}
\put( 44, 13){\line(1,0){33}}
\put( 81, 10){\circle{10}}
\put( 86, 10){\line(1,0){29}}
\put(120, 10){\circle{10}}
\put(125, 10){\line(1,0){20}}
\put(155,  7){$\cdot$}
\put(165,  7){$\cdot$}
\put(175,  7){$\cdot$}
\put(185, 10){\line(1,0){20}}
\put(210, 10){\circle{10}}
\put( 38, 20){$t$}
\put( 76, 20){$s_1$}
\put(116, 20){$s_2$}
\put(204, 20){$s_{n{-}1}$}
\end{picture}
\end{center}
The aim of this section is to prove that Conjecture~\ref{coko} holds 
for $W_n$. For this purpose, we first need to recall some results from 
\cite{kottwitz} concerning the decomposition of the character 
$\rho_{\CC}$ into irreducibles. 

\medskip

\begin{exemple} \label{kobn} A complete set of representatives of the 
conjugacy classes of involutions in $W_n$ is given as follows. Let 
$l,j$ be non-negative integers such that $l+2j\leq n$. Then set
\[ \sigma_{l,j}:=t_1\cdots t_ls_{l+1}s_{l+3} \cdots s_{l+2j-1}\in W_n,\]
where $t_1:=t$ and $t_i:=s_{i-1}t_{i-1}s_{s-i}$ for $2 \leq i \leq n$. 
Note that $\sigma_{l,j}$ is the longest element in a parabolic subgroup
of $W_n$ of type $B_l \times A_1 \times \ldots \times A_1$, where the
$A_1$ factor is repeated $j$ times. In particular, $\sigma_{l,j}$ has
minimal length in its conjugacy class; see also \cite[3.2.10]{gepf}. Let 
$\CC_{l,j}$ be the conjugacy class containing $\sigma_{l,j}$ and write 
$\rho_{l,j}= \rho_{\CC_{l,j}}$. The irreducible characters of $W_n$ are 
parametrised by pairs of partitions $(\alpha,\beta)$ such that 
$|\alpha|+|\beta|=n$. We write this as
\[ \Irr(W_n)=\{\chi^{(\alpha,\beta)} \mid (\alpha,\beta) \vdash n\}.\]
Now let $\chi \in\Irr(W)$. We associate with $\chi$ two invariants
$d(\chi)$ and $j_0(\chi)$, as follows. Let $(\alpha,\beta) \vdash n$ be
such that $\chi=\chi^{(\alpha,\beta)}$. Choose $m \geq 0$ such that we can 
write 
\[ \alpha=(0 \leq \alpha_1 \leq \alpha_2 \leq \ldots \leq \alpha_{m+1})
\qquad \mbox{and} \qquad \beta=(0 \leq \beta_1 \leq \beta_2 \leq \ldots \leq 
\beta_m).\]
As in \cite[\S 4.5]{LuB}, we have a corresponding ``symbol''
\[ \Lambda_m(\chi):=\binom{\lambda_1, \lambda_2,\ldots,\lambda_{m+1}}{\mu_1,
\mu_2,\ldots,\mu_m} \]
where $\lambda_i:=\alpha+i-1$ for $1 \leq i \leq m+1$ and $\mu_i=\beta_i+
i-1$ for $1\leq i \leq m$. We set 
\begin{align*}
d(\chi) &:= \mbox{ number of $i \in \{1,\ldots,m\}$ such that
$\mu_i \not\in \{\lambda_1,\lambda_2,\ldots,\lambda_{m+1}\}$}, \\
j_0(\chi) &:=\sum_{1 \leq i \leq m} \min\{\alpha_{i+1},\beta_i\}.
\end{align*}
(Note that these definitions do not depend on the choice of $m$.)
By \cite[22.14]{lusztig}, we have $f_\chi =2^{d(\chi)}$; furthermore, 
\[ \mbox{$\chi$ is special} \qquad \Longleftrightarrow \qquad \lambda_i 
\leq \mu_i \leq \lambda_{i+1} \quad \mbox{for $1\leq i \leq m$}.\]
Now, by \cite[(3.2.4)]{kottwitz}, the following hold:
\begin{itemize}
\item[(a)] $\langle \rho_{l,j},\chi\rangle_{W_n}=0$ unless
$\chi$ is special and $j+l=|\beta|$.
\item[(b)] If $\chi$ is special and $j+l=|\beta|$, then 
\[ \langle \rho_{l,j},\chi\rangle_{W_n}= \binom{d(\chi)}{j_0(\chi)-j} 
\qquad \mbox{(binomial coefficient)};\]
in particular, the multiplicity is zero unless $j_0(\chi)-d(\chi)\leq 
j \leq j_0(\chi)$.
\end{itemize} 
Consequently, if $\Ib$ denotes the set of all involutions in $W_n$, then
\[\rho_{\Ib}=\sum_{l,j} \rho_{l,j}=\sum_{\chi \in \SC(W_n)}
2^{d(\chi)}\, \chi.\]
\end{exemple}

\medskip
\begin{rema} \label{remkoB} By \cite[8.1]{LuB}, we have an explicit 
combinatorial description of the cuspidal and non-cuspidal two-sided 
cells in $W_n$. Let us briefly recall the main points of this description. 
Let $\CG$ be any two-sided cell and $\chi_0 \in \Irr_{\CG}(W_n)$ the 
unique special character. Let $(\alpha, \beta) \vdash n$ be such that 
$\chi_0= \chi^{(\alpha, \beta)}$. Write 
\[ \alpha=(0\leq \alpha_1 \leq \ldots \leq \alpha_{m+1}) \qquad \mbox{and}
\qquad \beta=(0 \leq \beta_1 \leq \ldots \leq \beta_m)\]
for some $m \geq 0$. Consider the corresponding symbol 
\[ \Lambda_m(\chi_0)=\binom{\lambda_1, \lambda_2,\ldots,
\lambda_{m+1}}{\mu_1, \mu_2,\ldots,\mu_m}; \qquad \mbox{see 
Example~\ref{kobn}}.\] 
We assume that $m$ is chosen such that $0$ does not appear in both rows 
of $\Lambda_m(\chi)$. 

First of all, $\CG$ is cuspidal if and only if
$n=d^2+d$ for some $d \geq 1$ and $\Lambda_m(\chi_0)$ contains each of 
the numbers $0,1,\ldots,2m$ exactly once.

Now consider the general case. Let $t_0$ be the largest entry in 
$\Lambda_m(\chi_0)$. Then $\CG$ is strongly non-cuspidal if there is 
some $i \in \{0, 1,\ldots, t_0-1\}$ which does not appear in any of the 
two rows of $\Lambda_m (\chi_0)$. Let us now assume that this is the case. 
Then there exists a parabolic subgroup $W' \subsetneqq W_n$ and a two-sided 
cell $\CG'$ of $W'$ such that $\Jrm_{W'}^{W_n}$ establishes a bijection 
\[ \Irr_{\CG'}(W') \rightarrow \Irr_{\CG}(W_n), \qquad \chi' \mapsto
\Jrm_{W'}^{W_n}(\chi').\]
More precisely, as discussed in \cite[8.1]{LuB}, the subgroup $W'$ and the
two-sided cell $\CG'$ can be chosen as follows, where $\chi_0'\in 
\Irr_{\CG'}(W')$ is the unique special character. 
\begin{itemize}
\item[(a)] There exists some $r \in \{1,\ldots,n\}$ such that $W'=W_{n-r} 
\times H_r$ where $W_{n-r}=\langle t,s_1,\ldots, s_{n-r-1} \rangle$ 
(of type $B_{n-r}$) and $H_r=\langle s_{n-r+1},\ldots,s_{n-1}
\rangle \cong \SG_r$.
\item[(b)] We have $\chi_0'=\psi_0\boxtimes \varepsilon_r$ where $\psi_0 \in
\Irr(W_{n-r})$ is special and $\varepsilon_r$ denotes the sign character 
on $H_r$; furthermore, $\Lambda_m(\chi_0)$ is obtained by increasing the 
largest $r$ entries in the symbol $\Lambda_m(\psi_0)$ by $1$. 
\item[(c)] We have $d(\chi_0)=d(\psi_0)$, $j_0(\chi_0)=j_0(\psi_0)+
\lfloor r/2 \rfloor$ and $|\beta|=|\beta'|+\lfloor r/2 \rfloor$ where
$\psi_0$ is labelled by the pair of partitions $(\alpha',\beta')\vdash n-r$.
\end{itemize}
Only (c) requires a proof here. (Both (a) and (b) are explicitly discussed
in \cite[8.1]{LuB}.) As remarked in Example~\ref{kobn}, we have 
$f_{\chi_0}=2^{d(\chi_0)}$ and $f_{\psi_0}=2^{d(\psi_0)}$. So the first
equality follows from Remark~\ref{cusp1}. To see the second equality in (c),
consider the symbol $\Lambda_m(\chi_0)$. Since $\chi_0$ is special, the 
largest $r$ entries in $\Lambda_m(\chi_0)$ are the last $r$ terms in the 
sequence
\[\lambda_1,\quad \mu_1,\quad\lambda_2,\quad\mu_2,\quad\ldots,\quad 
\lambda_m,\quad\mu_m, \quad\lambda_{m+1}.\]
Now consider the pair of partitions $(\alpha',\beta') \vdash n-r$ such that
$\psi_0=\chi^{(\alpha',\beta')}$; we also write 
$\alpha'=(0\leq \alpha_1' \leq \ldots \leq \alpha_{m+1}')$ and 
$\beta'=(0 \leq \beta_1' \leq \ldots \leq \beta_m')$. By (b), the symbol
$\Lambda_m(\chi_0)$ is obtained by increasing the largest~$r$ entries 
in the symbol $\Lambda_m(\psi_0)$ by~$1$. Consequently, the sequence 
\[\alpha_1, \quad \beta_1,\quad \alpha_2, \quad \beta_2, \quad \ldots, 
\quad \alpha_m, \quad\beta_m, \quad\alpha_{m+1}\]
is obtained from the sequence 
\[\alpha_1', \quad\beta_1',\quad \alpha_2', \quad \beta_2', \quad \ldots, 
\quad \alpha_m', \quad\beta_m', \quad\alpha_{m+1}'\]
by increasing the last $r$ terms in the latter sequence by $1$. This
then immediately yields the statements about $j_0(\chi_0)$ and $|\beta|$. 
Thus, (c) is proved. 
\end{rema}

\medskip

\begin{theo} \label{kottB} Let $W=W_n$ be of type $B_n$, as above. Let 
$\CG$ be a two-sided cell of $W_n$ and $\chi_0 \in \Irr_{\CG}(W_n)$ be 
the unique special character. Let $\CC$ be a conjugacy class of 
involutions in $W_n$. Then
\[ \langle \rho_{\CC},[C]\rangle_{W_n}=\langle \rho_{\CC},\chi_0
\rangle_{W_n}=|\CC \cap C|\qquad \mbox{for any left cell 
$C \subseteq \CG$}.\]
Thus, Kottwitz' Conjecture~\ref{coko} holds for $W_n$.
\end{theo}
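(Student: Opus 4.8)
The plan is to prove Kottwitz' Conjecture for $W_n$ by induction on $n$, using the reductions of Section~\ref{section:kottred} for the non-cuspidal two-sided cells and treating the unique cuspidal two-sided cell of $W_n$ as the base of the induction.

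Before the induction I record a reduction valid for every two-sided cell $\CG$ with special character $\chi_0$. Since $\rho_{\CC}$ is a non-negative combination of special characters of $W_n$ (Example~\ref{kobn}(a)), since the only special constituent of $[C]$ — whose constituents all lie in $\Irr_{\CG}(W_n)$ by Corollary~\ref{refine0} and Definition~\ref{def:fam} — is $\chi_0$ by ($\diamondsuit_1$), and since $\chi_0$ occurs in $[C]$ with multiplicity one (left cell modules of classical Weyl groups are multiplicity-free, \cite[12.13]{LuB}), we obtain $\langle\rho_{\CC},[C]\rangle_{W_n}=\langle\rho_{\CC},\chi_0\rangle_{W_n}$ for every left cell $C\subseteq\CG$. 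By Example~\ref{class1} we also have $|\CC\cap C|=|\CC\cap\CG|/\chi_0(1)$, so all three quantities in the theorem are independent of $C$, and the theorem reduces to the single equality $\langle\rho_{\CC},[C]\rangle_{W_n}=|\CC\cap C|$.

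For the inductive step: as $w_0\in W_n$ is central, Lemma~\ref{epsrho1} lets me replace $\CG$ by $\CG w_0$ if necessary, so by Definition~\ref{cuspfam} I may assume $\CG$ is strongly non-cuspidal or cuspidal. In the strongly non-cuspidal case, with $W'=W_{n-r}\times H_r$ ($H_r\cong\SG_r$) and $\CG'$ as in Remark~\ref{remkoB}, I apply Lemma~\ref{kott0}. Condition (K1) is exactly Example~\ref{class1}. For (K2), Kottwitz' Conjecture holds for $\SG_r$ (Example~\ref{koan}) and, by induction, for every two-sided cell of $W_{n-r}$; since left cells, conjugacy classes of involutions, and the characters $[C]$ and $\rho_{\CC}$ all decompose as external products over a direct product of Coxeter groups (the last because the rule in Definition~\ref{defko} for $(w_1,w_2)$ only involves the relevant factor), Kottwitz' Conjecture for $W_{n-r}\times H_r$ — and in particular for $\CG'$ — follows at once. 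For (K3), if $\chi'$ is not special the left-hand side vanishes, since $\rho_{\CC\cap W'}$ is supported on special characters of $W'$ (Example~\ref{kobn}(a) for the $B$-factor, all characters of $\SG_r$ being special for the $A$-factor), so only $\chi'=\chi_0'=\psi_0\boxtimes\varepsilon_r$ needs treatment; decomposing $\CC\cap W'$ into conjugacy classes, writing $\rho_{\CC\cap W'}$ as the corresponding sum of external products $\rho^{W_{n-r}}_{l,\,j-k}\boxtimes\rho^{\SG_r}_k$, and combining $\langle\rho^{\SG_r}_k,\varepsilon_r\rangle_{\SG_r}=\delta_{k=\lfloor r/2\rfloor}$ (Example~\ref{koan}(b)) with the binomial formula of Example~\ref{kobn}(b) and the relations $d(\chi_0)=d(\psi_0)$, $j_0(\chi_0)=j_0(\psi_0)+\lfloor r/2\rfloor$, $|\beta|=|\beta'|+\lfloor r/2\rfloor$ of Remark~\ref{remkoB}(c), everything collapses to the single identity $\langle\rho_{\CC\cap W'},\chi_0'\rangle_{W'}=\langle\rho_{\CC},\chi_0\rangle_{W_n}$, so (K3) in fact holds with equality.

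It remains to treat the cuspidal two-sided cell, which exists only for $n=m(m+1)$; a short calculation with the symbol of Remark~\ref{remkoB} identifies its special character as $\chi_0=\chi^{(\mu,\mu)}$ with $\mu=(1,2,\dots,m)$, for which $d(\chi_0)=m$ and $j_0(\chi_0)=|\beta|=m(m+1)/2$, so that Example~\ref{kobn}(b) gives $\langle\rho_{\CC},\chi_0\rangle_{W_n}=\binom{m}{l}$ for $\CC=\CC_{l,j}$ with $l+j=m(m+1)/2$ and $0\le l\le m$, and $0$ otherwise. By the reduction of the second paragraph, the claim for this cell amounts to $|\CC\cap\CG|=\chi_0(1)\binom{m}{l}$ (resp.\ $0$). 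I expect this to be the main obstacle: it must be verified directly against the explicit parametrization of the involutions contained in the cuspidal two-sided cell of $W_n$ at equal parameters, via the generalized Robinson--Schensted correspondence (see \cite[Chap.~12]{LuB} and the references there); summing over all $\CC$ gives the consistency check $|\CG_{(2)}|=\chi_0(1)\,2^{m}=\chi_0(1)\,2^{d(\chi_0)}$, compatible with $\rho_{\Ib}=\sum_{\chi\in\SC(W_n)}2^{d(\chi)}\chi$ from Example~\ref{kobn}. The type $D_n$ statement (Theorem~\ref{kottD}) is then obtained in Section~\ref{section:kottDn} by the same inductive scheme, transporting cells and conjugacy classes through the index-two inclusion $W(D_n)\subseteq W_n$.
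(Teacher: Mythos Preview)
Your treatment of the first equality and of the strongly non-cuspidal case is essentially the paper's argument: the same reduction via special characters and multiplicity-freeness, the same use of Lemma~\ref{kott0} with (K1) coming from Example~\ref{class1}, (K2) from the inductive hypothesis (your decomposition of cells, classes and $\rho$ over the direct product $W_{n-r}\times H_r$ is correct and makes explicit what the paper leaves implicit), and (K3) from the identities of Remark~\ref{remkoB}(c) together with Example~\ref{kobn}(b). The passage from strongly non-cuspidal to non-cuspidal via $w_0$ is also the same.

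The genuine gap is the cuspidal case. You propose to compute $|\CC\cap\CG_0|$ directly from an explicit combinatorial description of the involutions lying in the cuspidal two-sided cell, and you leave this computation undone (``I expect this to be the main obstacle''). The paper bypasses this entirely by a global subtraction argument that you are missing. Once Kottwitz' Conjecture is known for every non-cuspidal two-sided cell $\CG_i$ ($i\geq 1$), with special character $\chi_i$ and chosen left cell $C_i\subseteq\CG_i$, one compares two expressions for $|\CC|$. On one hand, since $\rho_{\CC}$ has only special constituents,
\[
\sum_{i\geq 0}\chi_i(1)\,\langle\rho_{\CC},\chi_i\rangle_{W_n}
=\Big\langle\rho_{\CC},\sum_{\chi\in\Irr(W_n)}\chi(1)\chi\Big\rangle_{W_n}
=\rho_{\CC}(1)=|\CC|.
\]
On the other hand, by Example~\ref{class1},
\[
\sum_{i\geq 0}\chi_i(1)\,|\CC\cap C_i|=\sum_{i\geq 0}|\CC\cap\CG_i|=|\CC|.
\]
For $i\geq 1$ the $i$-th summands agree by the non-cuspidal case already established, so subtracting forces
\[
\chi_0(1)\,\langle\rho_{\CC},\chi_0\rangle_{W_n}=\chi_0(1)\,|\CC\cap C_0|,
\]
and the cuspidal cell is done without any explicit knowledge of which involutions it contains. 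This is the key idea you are lacking; with it, no direct combinatorics on the cuspidal cell is needed at all.
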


\begin{proof} The first equality is seen as follows. As already remarked
in Example~\ref{class1}, we have $\langle [C],\chi_0\rangle_{W_n}=1$ for
every left cell $C \subseteq \CG$. On the other hand, by 
Example~\ref{kobn}(a), all constituents of $\rho_{\CC}$ are special. Hence,
we have $\langle \rho_{\CC}, [C]\rangle_{W_n}=\langle \rho_{\CC},\chi_0
\rangle_{W_n}$, as required.

We now show by induction on $n$ that Kottwitz' Conjecture holds. If $n=1$, 
then $W_2 \cong \SG_2$ and the assertion holds by Example~\ref{koan}. Now 
assume that $n \geq 2$. First we consider the case where $\CG$ is strongly
non-cuspidal. Let $W'=W_{n-r} \times H_r$ and $\psi_0 \in \Irr(W_{n-r})$
be as in Remark~\ref{remkoB}, where $r \in \{1,\ldots,n\}$. Then
\[ \chi_0=\Jrm_{W'}^W(\chi_0') \qquad \mbox{where} \qquad \chi_0'=\psi_0
\boxtimes \varepsilon_r.\]
Let $\CG'$ be the two-sided cell of $W'$ such that $\chi_0'\in 
\Irr_{\CG'}(W')$. We now check that the assumptions (K1), (K2), (K3) in 
Lemma~\ref{kott0} are satisfied.

Assumption (K1) certainly holds by the identity in Example~\ref{class1},
while (K2) holds by our inductive hypothesis. Now consider (K3).

Let $\chi' \in \Irr_{\CG'}(W')$ and $\CC$ be a conjugacy class of
involutions in $W_n$ such that $\CC \cap W' \neq \varnothing$. If 
$\langle \rho_{\CC \cap W'},\chi' \rangle_{W'}=0$, then the assertion 
is obvious. Now assume that $\langle \rho_{\CC \cap W'},\chi'\rangle_{W'}
\neq 0$. Then there is a conjugacy class of involutions $\CC'$ in $W'$ 
such that 
\begin{equation*}
\CC' \subseteq \CC \cap W' \qquad \mbox{and} \qquad \langle \rho_{\CC'},
\chi' \rangle_{W'}\neq 0.\tag{$\triangle$}
\end{equation*}
(We shall see that $\CC'$ is uniquely determined with property.) Since 
$W'$ is a direct product, we can write $\CC'$ as a direct product of a
conjugacy class in $W_{n-r}$ and a conjugacy class in $H_r$. Thus, using 
the notation in Examples~\ref{koan} and \ref{kobn}, we have 
\[\CC'=\CC_{l,j'} \times \CC_k \qquad \mbox{where} \qquad l,j',k\geq 0, 
\quad l+2j'\leq n-r, \quad 2k\leq r;\]
here, the class $\CC_{l',j'}\subseteq W_{n-r}$ has a representative 
$\sigma_{l,j'}$ given by the expression in Example~\ref{kobn} and 
the class $\CC_k \subseteq H_r$ has a representative $\sigma_k$ as in 
Example~\ref{koan}. (Explicitly, we have $\sigma_k=s_{n-r+1}s_{n-r+3}
\cdots s_{n-r+2k-1}$.) We note that $\sigma_{l,j'} \times \sigma_k \in 
\CC'$ is the longest element in a parabolic subgroup of $W_n$ of type $B_l
\times A_1 \times \ldots \times A_1$, where the $A_1$ factor is repeated
$j'+k$ times. Hence, since $\CC' \subseteq \CC$, we must have
\[ \CC=\CC_{l,j} \qquad \mbox{where} \qquad j=j'+k.\] 
Now, we can also write $\chi'=\psi \boxtimes \varepsilon_r$ where $\psi
\in \Irr(W_{n-r})$. Then  we obtain
\[ \langle \rho_{\CC'},\chi'\rangle_{W'}=\langle \rho_{l,j'},\psi
\rangle_{W_{n-r}} \langle \rho_{k},\varepsilon_r\rangle_{H_r},\]
Since this is assumed to be non-zero, we conclude that 
\[ \langle \rho_{l,j'},\psi \rangle_{W_{n-r}} \neq 0 \qquad \mbox{and}
\qquad \langle \rho_{k},\varepsilon_r\rangle_{H_r} \neq 0.\]
By Example~\ref{kobn}, the first condition implies that $\psi$ is special 
and, hence, $\chi'$ is special. Thus, we must have $\chi'=\chi_0'$ and 
$\psi=\psi_0$. By Example~\ref{koan}(b), the second condition implies that 
$\langle \rho_k,\varepsilon_r\rangle_{H_r}=1$ and $k=\lfloor r/2 \rfloor$. 
In particular, the class $\CC'$ in ($\triangle$) is uniquely determined. 
Combining these statements, we obtain that 
\[\langle \rho_{\CC \cap W'},\chi'\rangle_{W'}=\langle \rho_{\CC'},\chi'
\rangle_{W_n}=\langle \rho_{l,j'},\psi_0 \rangle_{W_{n-r}}.\]
Since $\chi'=\chi_0'$, we have $\chi_0=\Jrm_{W'}^{W_n}(\chi')$; since $\CC=
\CC_{l,j}$, we are finally reduced to showing that 
\[ \langle \rho_{l,j'},\psi_0\rangle_{W_{n-r}}\leq \langle \rho_{l,j}, 
\chi_0\rangle_{W_n} \qquad \mbox{where} \qquad j=j'+k \quad \mbox{and} 
\quad k=\lfloor r/2 \rfloor.\]
But, by Remark~\ref{remkoB}(c), we have $d(\chi_0)=d(\psi_0)$ and
$j_0(\chi_0)=j_0(\psi_0)+ \lfloor r/2 \rfloor$. Hence, the multiplicity 
formula in Example~\ref{kobn} shows that we actually have 
\[\langle \rho_{l,j'},\psi_0\rangle_{W_{n-r}}=\langle \rho_{l,j}, 
\chi_0\rangle_{W_n}.\] 
Thus, (K3) is satisfied and so Kottwitz' Conjecture holds for $\CG$. Since 
the longest element $w_0 \in W_n$ is central in $W_n$, we can apply 
Lemma~\ref{epsrho1} which shows that Kottwitz's Conjecture will
also hold for $\CG w_0$. By \cite[8.1]{LuB}, these arguments cover all
non-cuspidal two-sided cells in $W_n$.

It remains to consider the case where $\CG$ is a cuspidal two-sided cell.
By Remark~\ref{remkoB}, such a two-sided cell can only exist if $n=d^2+d$
for some $d \geq 1$, in which case it is uniquely determined. So let
us now assume that $n=d^2+d$ where $d \geq 1$. Let $W_n=\coprod_{0 \leq
i\leq N}\CG_i$ be the partition into two-sided cells where $\CG_0$ is
the unique cuspidal two-sided cell. For $0 \leq i \leq N$, let $\chi_i\in 
\Irr_{\CG_i}(W_n)$ be the unique special character and let $C_i\subseteq
\CG_i$ be a left cell. Let $\CC$ be a conjugacy class of involutions. To
obtain a statement about $\langle \rho_{\CC},\chi_0\rangle_{W_n}$, we
consider 
\[ \sum_{0 \leq i \leq N} \chi_i(1)\langle \rho_{\CC},\chi_i\rangle_{W_n}=
\Big\langle \rho_{\CC}, \sum_{0 \leq i \leq N} \chi_i(1)\chi_i
\Big\rangle_{W_n}.\]
Since all constituents of $\rho_{\CC}$ are special, the sum on
the right hand side can be extended over all $\chi \in \Irr(W_n)$, 
in which case we just obtain the character of the regular representation
of $W_n$. Hence, the right hand side equals $\rho_{\CC}(1)$. Now, for any
$i \geq 1$, we already know that Kottwitz's Conjecture holds for $\CG_i$ 
and so $\langle \rho_{\CC},\chi_i\rangle_{W_n}=\langle \rho_{\CC},[C_i]
\rangle_{W_n}=|\CC\cap C_i|$. Hence, we find that 
\[ \chi_0(1)\langle \rho_{\CC},\chi_0\rangle_{W_n}=\rho_{\CC}(1)-
\sum_{1 \leq i \leq N} \chi_i(1)|\CC \cap C_i|.\] 
On the other hand, using the identity in Example~\ref{class1}, we obtain
\[ \sum_{0 \leq i \leq N} \chi_i(1)|\CC \cap C_i|=
\sum_{0 \leq i \leq N} |\CC \cap \CG_i|=|\CC|.\]
Hence, we find that 
\[ \chi_0(1)|\CC\cap C_0|=|\CC|-\sum_{1\leq i \leq N} \chi_i(1)
|\CC\cap C_i|.\]
Since $\rho_{\CC}(1)=|\CC|$, we deduce that 
\[ \chi_0(1)\langle \rho_{\CC}, \chi_0\rangle_{W_n}=\chi_0(1)
|\CC\cap C_0|\]
and so $\langle \rho_{\CC},[C_0]\rangle_{W_n}=\langle \rho_{\CC},
\chi_0\rangle_{W_n} =|\CC \cap C_0|$, as required.
\end{proof}

\section{Kottwitz' Conjecture for type $D_n$}\label{section:kottDn}

Throughout this section, let $n \geq 2$ and $W=W_n'$ be of type $D_n$, 
with generators $u,s_1,\ldots,s_{n-1}$ and diagram given as follows.
\begin{center}
\begin{picture}(220,40)
\put( 41,  3){\circle{10}}
\put( 41, 33){\circle{10}}
\put( 46,  4){\line(3,1){31}}
\put( 46, 32){\line(3,-1){31}}
\put( 81, 18){\circle{10}}
\put( 86, 18){\line(1,0){29}}
\put(120, 18){\circle{10}}
\put(125, 18){\line(1,0){20}}
\put(155, 15){$\cdot$}
\put(165, 15){$\cdot$}
\put(175, 15){$\cdot$}
\put(185, 18){\line(1,0){20}}
\put(210, 18){\circle{10}}
\put( 22, 31){$s_1$}
\put( 23,  0){$u$}
\put( 76, 28){$s_2$}
\put(116, 28){$s_3$}
\put(204, 28){$s_{n{-}1}$}
\end{picture}
\end{center}
By convention, we will also set $W_0'=W_1'=\{1\}$. The aim of this section 
is to prove that Conjecture~\ref{coko} holds for $W_n'$ (where we also rely
on some results in \cite{tkott}). For this purpose, it will be convenient 
to use an embedding of $W_n'$ into the group $W_n$ of type $B_n$, with 
generators $t,s_1,\ldots,s_{n-1}$ and diagram as in the previous section. 
Setting $u=ts_1t$ (and identifying the remaining generators $s_1,\ldots,
s_{n-1}$), we can identify $W_n'$ with a subgroup of $W_n$. Thus, we have 
$W_n\cong W_n' \rtimes \langle \theta\rangle$ where $\theta \colon W_n'
\rightarrow W_n'$ is the automorphism given by conjugation with~$t$. In 
this setting, a large part of the argument will be analogous to that for 
type $B_n$. However, when $n$ is even, there are some particularly 
intricate questions to solve concerning the unique conjugacy class of 
involutions in $W_n'$ which is not invariant under~$\theta$.

\medskip

\begin{exemple} \label{kodn} Let $\CC'$ be a conjugacy class of 
involutions in $W_n'$. If $\theta(\CC')= \CC'$, then $\CC'$ is a conjugacy 
class in $W_n$ and the decomposition of $\rho_{\CC'}$ into irreducible 
characters of $W_n'$ is given by formulae similar to those for type 
$W_n$ in Example~\ref{kobn}; see \cite[\S 3.3]{kottwitz}. In particular, 
we have
\begin{equation*}
\langle \rho_{\CC'},\chi\rangle_{W_n'}=0 \qquad \mbox{unless
$\chi \in \Irr(W_n')$ is special and can be extended to $W_n$}. \tag{a}
\end{equation*}
Classes which are not $\theta$-invariant can only exist if $n$ is even,
and then we will also encounter characters which can not be extended
to $W_n$. So let us now assume that $n$ is even. Let $\CC_0'$ be the 
conjugacy class of $W_n'$ containing the element
\[ \sigma_{0,n/2}:=s_1s_3s_5 \cdots s_{n-1}.\]
Then $\theta(\CC_0')\neq \CC_0'$ and $\{\CC_0',\theta(\CC_0')\}$ is the
only pair of conjugacy classes of involutions with this property; see
\cite[3.4.12]{gepf}. To describe the decomposition of $\rho_{\CC_0'}$ into
irreducible characters, we introduce some further notation. For every
partition $\alpha \vdash n/2$, we define two characters $\chi^{\alpha,
\pm 1} \in \Irr(W_n')$, as follows. Let $H_n=\langle s_1, \ldots,s_{n-1}
\rangle \cong \SG_n$. Let $2\alpha^*$ denote the partition of $2n$ obtained
by multiplying all parts of the conjugate partition $\alpha^*$ by $2$ and 
consider the corresponding Young subgroup $H_{2\alpha^*} \subseteq H_n$. 
(We have $H_{2\alpha^*} \cong \SG_{2\alpha^*}$.). Let 
$\varepsilon_{2\alpha^*}$ be the sign character of $H_{2\alpha^*}$ and let 
$w_{2\alpha^*}$ be the longest element in $H_{2\alpha^*}$. Then,
by \cite[5.3.2]{gepf}, there is a unique $\chi^{\alpha,\pm 1}\in \Irr(W_n')$
such that $\bb_\chi=\ell(w_{2\alpha^*})$ and 
\[\Ind_{H_{2\alpha^*}}^{W_n'}(\varepsilon_{2\alpha^*})=\chi^{\alpha,+1}+
\mbox{ sum of various $\chi \in \Irr(W_n')$ with $\bb_\chi>
\ell(w_{2\alpha^*})$};\]
furthermore, $\chi^{\alpha,-1}$ is defined as the conjugate of 
$\chi^{\alpha,+}$ under $\theta$. It is well-known that 
$\{\chi^{\alpha,\pm 1} \mid \alpha \vdash n/2\}$ are precisely the 
irreducible characters of $W_n'$ which can not be extended to $W_n$; see 
\cite[4.6]{LuB}, \cite[\S 5.6]{gepf}. By \cite[\S 3.3]{kottwitz}, the 
decompositions of $\rho_{\CC_0'}$ and $\rho_{\theta(\CC_0')}$ into 
irreducible characters are given as follows: 
\begin{equation*}
\rho_{\CC_0'}=\sum_{\alpha \vdash n/2} \chi^{\alpha,\nu_\alpha} \qquad
\mbox{and} \qquad \rho_{\theta(\CC_0')}=\sum_{\alpha \vdash n/2} 
\chi^{\alpha,-\nu_\alpha} \tag{b}
\end{equation*}
where $\nu_\alpha\in \{\pm 1\}$ for all $\alpha \vdash n/2$.
\end{exemple}

\medskip
Note that the above signs have not been determined in \cite{kottwitz}. 
It will be essential to fix these signs in order to prove Kottwitz'
Conjecture. In fact, the following example shows that this conjecture 
can only hold if $\nu_\alpha=+1$ for all $\alpha \vdash n/2$.

\medskip

\begin{exemple} \label{kodn1} Assume that $n$ is even and ley $\CC_0'$ be
the conjugacy class of the element $\sigma_{0,n/2}$, as above. By 
\cite[4.6.10]{LuB}, we have for any $\chi=\chi^{\alpha,\pm 1}$ where 
$\alpha \vdash n/2$:
\begin{equation*}
f_\chi=1 \qquad \mbox{and} \qquad \ab_{\chi}=\bb_{\chi}=\ell(w_{2\alpha^*})
\tag{a} \end{equation*}
Consequently, each $\chi^{\alpha,\pm 1}$ is special and we have 
\begin{equation*}
\chi^{\alpha,+1}=\Jrm_{H_{2\alpha^*}}^{W_n'}(\varepsilon_{2\alpha^*})
\qquad \mbox{for any $\alpha \vdash n/2$};\tag{b}
\end{equation*}
see \cite[4.6.2]{LuB}. Let $\CG_\alpha^{\pm}$ denote the two-sided cell 
such that $\chi^{\alpha, \pm 1} \in \Irr_{\CG_\alpha^{\pm}}(W_n')$. Then
$\CG_\alpha^{\pm}$ is smooth, by (a) and Lemma~\ref{lem:smooth}. We also 
have:
\begin{equation*}
\CC_0' \cap \CG_{\alpha}^{+} \neq \varnothing \qquad \mbox{and} \qquad
\theta(\CC_0') \cap \CG_{\alpha}^{-} \neq \varnothing \qquad \mbox{for all
$\alpha \vdash n/2$}.\tag{c}
\end{equation*}
Indeed, (a), (b) and Example~\ref{lem:useful} show that $w_{2\alpha^*}\in 
\CG_\alpha^{+}$. Now recall that $H_{2\alpha^*}$ is isomorphic to a direct 
product of various symmetric groups of even degrees, where the sum of all
these degrees is $n$. Since the longest element in $\SG_{2m}$ (any $m 
\geq 1$) is a product of $m$ disjoint $2$-cylces, we see that 
$w_{2\alpha^*}$ is a product of $n/2$ disjoint $2$-cycles and so we have 
$w_{2\alpha^*} \in \CC_0'$. Hence, $\CC_0'$ is the unique conjugacy 
class of involutions in $W_n'$ such that $\CC_0' \cap \CG_{\alpha}^{+} \neq 
\varnothing$ (see Corollary~\ref{coro:conjugues}). Similarly, 
$\theta(\CC_0')$ is the unique conjugacy class of involutions in $W_n'$ 
such that $\theta(\CC_0') \cap \CG_{\alpha}^{+} \neq \varnothing$.

In particular, if $C$ is a left cell contained in $\CG_\alpha^+$, then 
$[C]=\chi^{\alpha,+1}$ and $|\CC_0'\cap C|=1$. So, if Kottwitz' 
Conjecture holds for $W_n'$, then we must have $\langle \rho_{\CC_0'},
\chi^{\alpha,+1}\rangle=1$.
\end{exemple}

\medskip

Now, determining the signs in Example~\ref{kodn}(b) is related to the 
subtle issue of distinguishing the two characters $\chi^{\alpha,+1}$ and 
$\chi^{\alpha,-1}$ for a given partition $\alpha \vdash n/2$. We shall
need the following version of the ``branching rule'' for the characters
of $W_n'$.

\medskip

\begin{lem} \label{solveD2} Assume that 
$n \geq 2$ is even. Consider the parabolic subgroup $W'=W_{n-2}'\times H_2$ 
where $W_{n-2}'=\langle u,s_1,\ldots,s_{n-3}\rangle$ (type $D_{n-2}$) and 
$H_2=\langle s_{n-1} \rangle$. Let $\alpha' \vdash (n-2)/2$ and denote by 
$\varepsilon_1$ the sign character on the factor $H_2$. Then
\[ \Ind_{W'}^{W_n'}\bigl(\chi^{\alpha',+1} \boxtimes \varepsilon_1\bigr)=
\sum_{\alpha} \chi^{\alpha,+1} \quad+\quad  \mbox{``further terms''},\]
where the sum runs over all partitions $\alpha \vdash n/2$ such that
$\alpha$ is obtained by increasing one part of $\alpha'$ by~$1$; the
expression ``further terms'' stands for a sum of various $\chi \in
\Irr(W_n')$ which can be extended to $W_n$. In particular,
\[ \Big\langle \Ind_{W'}^{W_n'}\bigl(\chi^{\alpha',+1}\boxtimes
\varepsilon_1\bigr),\chi^{\alpha,-1} \Big\rangle_{W_n'}=0 \qquad 
\mbox{for all $\alpha \vdash n/2$}.\]
\end{lem}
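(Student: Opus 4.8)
Part (ii) of the lemma will be immediate once part (i) is known: the irreducible characters of $W_n'$ that do not extend to $W_n$ are exactly the $\chi^{\alpha,\pm 1}$ with $\alpha\vdash n/2$; they form $\theta$-orbits $\{\chi^{\alpha,+1},\chi^{\alpha,-1}\}$ of size two and are pairwise distinct, so no $\chi^{\alpha,-1}$ can occur among the ``further terms'' (which extend to $W_n$) nor coincide with some $\chi^{\beta,+1}$. So the plan is to compute $\Ind_{W'}^{W_n'}\!\bigl(\chi^{\alpha',+1}\boxtimes\varepsilon_1\bigr)$ and read off part (i). I would do this in two stages: first pin down the decomposition up to, for each box-addition $\beta$ of $\alpha'$, the choice of sign in $\chi^{\beta,\pm1}$; then show that this sign is always $+1$.

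Stage 1. Embed $W'=W_{n-2}'\times H_2$ into $W_{n-2}\times H_2\subseteq W_n$. Since $[W_n:W_n']=2$, $W_{n-2}\times H_2\not\subseteq W_n'$ and $(W_{n-2}\times H_2)\cap W_n'=W'$, the Mackey formula has a single double coset and, using $\Res^{W_{n-2}}_{W_{n-2}'}\chi^{(\alpha',\alpha')}=\chi^{\alpha',+1}+\chi^{\alpha',-1}$, yields
\[\Ind_{W'}^{W_n'}\!\bigl(\chi^{\alpha',+1}\boxtimes\varepsilon_1\bigr)+\Ind_{W'}^{W_n'}\!\bigl(\chi^{\alpha',-1}\boxtimes\varepsilon_1\bigr)=\Res^{W_n}_{W_n'}\Ind^{W_n}_{W_{n-2}\times H_2}\!\bigl(\chi^{(\alpha',\alpha')}\boxtimes\varepsilon_1\bigr).\]
I would evaluate the right-hand side with the branching rule for type $B_n$ (the Pieri rule for inducing a sign character from $W_{n-2}\times\SG_2$ adds two vertical strips of total size $2$ to the two components of the bipartition; see e.g.~\cite{gepf}), and then restrict to $W_n'$: a constituent $\chi^{(\tilde\mu,\tilde\nu)}$ restricts to a $W_n$-extendible character unless $\tilde\mu=\tilde\nu$, which forces $\tilde\mu=\tilde\nu=\beta$ with $\beta$ obtained from $\alpha'$ by adding one box, contributing $\chi^{\beta,+1}+\chi^{\beta,-1}$ with total multiplicity $1$. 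Finally $\Ind_{W'}^{W_n'}\bigl(\chi^{\alpha',-1}\boxtimes\varepsilon_1\bigr)=\theta\bigl(\Ind_{W'}^{W_n'}(\chi^{\alpha',+1}\boxtimes\varepsilon_1)\bigr)$, and $\theta$ fixes every $W_n$-extendible character of $W_n'$ (being induced by an inner automorphism of $W_n$) while interchanging $\chi^{\beta,+1}$ and $\chi^{\beta,-1}$; comparing with the display above gives
\[\Ind_{W'}^{W_n'}\!\bigl(\chi^{\alpha',+1}\boxtimes\varepsilon_1\bigr)=\sum_{\beta}\chi^{\beta,\nu'_\beta}\;+\;\bigl(\text{sum of $W_n$-extendible characters}\bigr),\]
the sum running over all box-additions $\beta$ of $\alpha'$, each $\chi^{\beta,\nu'_\beta}$ with multiplicity $1$ and $\nu'_\beta\in\{\pm1\}$.

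Stage 2 (the crux). The only datum distinguishing $\chi^{\beta,+1}$ from $\chi^{\beta,-1}$ is their description by truncated induction of sign characters: by Example~\ref{kodn1}(b), $\chi^{\beta,+1}=\Jrm^{W_n'}_{H_{2\beta^*}}(\varepsilon_{2\beta^*})$ with $H_{2\beta^*}$ a Young subgroup of $H_n=\langle s_1,\dots,s_{n-1}\rangle$, whereas $\chi^{\beta,-1}$ is truncated-induced of sign from the corresponding Young subgroup of $\theta(H_n)=\langle u,s_2,\dots,s_{n-1}\rangle$. Since $\chi^{\alpha',+1}=\Jrm^{W_{n-2}'}_{H_{2(\alpha')^*}}(\varepsilon)$ with $H_{2(\alpha')^*}\subseteq\langle s_1,\dots,s_{n-3}\rangle$, and $j$-induction is transitive and multiplicative over direct products, $\chi^{\alpha',+1}\boxtimes\varepsilon_1$ is the truncated induction to $W'$ of the sign character of the Young subgroup $H_{2(\alpha')^*}\times H_2$ of $H_n$; applying transitivity once more gives $\Jrm^{W_n'}_{W'}\bigl(\chi^{\alpha',+1}\boxtimes\varepsilon_1\bigr)=\chi^{\gamma,+1}$, where $\gamma$ is the box-addition in the longest row of $\alpha'$ — the unique one with $\ell(w_{2\gamma^*})$ minimal — so that $\nu'_\gamma=+1$ and $\chi^{\gamma,-1}$ does not occur. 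For the remaining box-additions I would reorder the parts of $2(\alpha')^*$ so that the part to be enlarged sits last, realize $H_{2\beta^*}$ as a Young subgroup of $H_n$ containing $H_{2(\alpha')^*}\times H_2$, and combine the Pieri rule inside $\SG_n\cong H_n$ with the truncated-induction description of $\chi^{\beta,+1}$ and a bookkeeping of $b$-invariants along the chain $H_{2(\alpha')^*}\times H_2\subseteq H_{2\beta^*}\subseteq H_n\subseteq W_n'$ to force $\langle\Ind_{W'}^{W_n'}(\chi^{\alpha',+1}\boxtimes\varepsilon_1),\chi^{\beta,-1}\rangle=0$, hence $\nu'_\beta=+1$. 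I expect this sign determination for box-additions away from the longest row to be the main obstacle — it is exactly the ``particularly intricate'' point flagged before the statement, the $b$-invariant estimates no longer being forced by the extremality that makes the $\gamma$-case transparent. Once $\nu'_\beta=+1$ for all $\beta$ is established, both displayed identities of the lemma follow at once.
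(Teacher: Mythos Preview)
The paper does not actually prove this lemma: immediately after the statement it simply says ``A proof can be found in \cite[\S 3]{tkott}.'' So there is no in-paper argument to compare your proposal against; the authors outsource precisely the sign determination you are wrestling with to the companion paper~\cite{tkott}.

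On the substance of what you wrote: your Stage~1 is clean and correct. The Mackey argument with the single double coset, the type-$B$ Pieri rule applied to $\chi^{(\alpha',\alpha')}\boxtimes\varepsilon_1$, and the $\theta$-symmetry comparison together do pin down the induced character up to the signs $\nu'_\beta$, exactly as you say.

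Your Stage~2, however, is not a proof but an outline with an acknowledged gap. The transitivity-of-$j$-induction argument does handle one box-addition $\gamma$ cleanly (and your identification of which $\gamma$ this is should be checked against the conventions in Example~\ref{kodn1}: the Young subgroup $H_{2(\alpha')^*}\times H_2$ has part-multiset $\{2(\alpha')^*_i\}\cup\{2\}$, so the resulting $\gamma^*$ is $(\alpha')^*$ with a part equal to~$1$ adjoined, i.e.\ $\gamma$ is $\alpha'$ with a box added in a \emph{new} column, not necessarily in the longest row). For the remaining $\beta$ you propose ``reordering parts'' and ``bookkeeping of $b$-invariants,'' but this is exactly where the argument has real content: once the $b$-invariant of $\chi^{\beta,\pm1}$ is not extremal among the constituents of the induced character, transitivity of $j$-induction gives you nothing, and a direct comparison of multiplicities in $\Ind_{H_{2(\alpha')^*}\times H_2}^{W_n'}(\varepsilon)$ does not obviously separate $\chi^{\beta,+1}$ from $\chi^{\beta,-1}$ either, since both have the same restriction to any subgroup of $H_n$. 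You have correctly located the difficulty but not resolved it; this is precisely why the authors defer to~\cite{tkott}, where a more elaborate argument (tracking the characters through the $W_n'\hookrightarrow W_n$ embedding and using explicit information about the extensions) is carried out.
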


A proof can be found in \cite[\S 3]{tkott}. 

\medskip

\begin{prop} \label{signD} Assume that $n\geq 2$ is even. Then, with the
notation in Example~\ref{kodn}, we have 
\[\rho_{\CC_0'}=\sum_{\alpha \vdash n/2} \chi^{\alpha,+1} \qquad \mbox{and}
\qquad \rho_{\theta(\CC_0')}=\sum_{\alpha \vdash n/2} \chi^{\alpha,-1}.\]
\end{prop}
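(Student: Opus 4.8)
The plan is to establish the first identity $\rho_{\CC_0'}=\sum_{\alpha\vdash n/2}\chi^{\alpha,+1}$ by induction on the even integer $n\geq 2$, and then to deduce the statement for $\theta(\CC_0')$ by transport of structure. Observe first that by Example~\ref{kodn}(b) we already know $\rho_{\CC_0'}=\sum_{\alpha\vdash n/2}\chi^{\alpha,\nu_\alpha}$ for some signs $\nu_\alpha\in\{\pm 1\}$; hence the assertion is equivalent to $\langle\rho_{\CC_0'},\chi^{\alpha,-1}\rangle_{W_n'}=0$ for every $\alpha\vdash n/2$, that is, to the vanishing of all the ``minus'' constituents. (That Kottwitz' Conjecture forces $\nu_\alpha=+1$ was already seen in Example~\ref{kodn1}; the point here is to prove this unconditionally.)

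For the base case $n=2$ one has $W_2'$ of type $D_2\cong A_1\times A_1$ with $\CC_0'=\{s_1\}$ and $\theta(\CC_0')=\{u\}$, and a direct reading of Definition~\ref{defko} shows that $\rho_{\CC_0'}$ is trivial on $\langle u\rangle$ and equal to the sign on $\langle s_1\rangle$, which is exactly $\chi^{(1),+1}$. For the inductive step, suppose $n\geq 4$ is even and take the parabolic subgroup $W'=W_{n-2}'\times H_2$ with $H_2=\langle s_{n-1}\rangle$ as in Lemma~\ref{solveD2}. Since $\sigma_{0,n/2}=s_1s_3\cdots s_{n-1}=(s_1s_3\cdots s_{n-3})\cdot s_{n-1}$ and $s_1s_3\cdots s_{n-3}$ represents the non-$\theta$-invariant involution class $\CC_0'$ of $W_{n-2}'$ (its $W_{n-2}'$-analogue of $\sigma_{0,n/2}$), the product $\CC_0'(W_{n-2}')\times\{s_{n-1}\}$ is a single conjugacy class of involutions of $W'$, and it is contained in $\CC_0'=\CC_0'(W_n')$ because it contains $\sigma_{0,n/2}$. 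From Definition~\ref{defko} one reads off that its Kottwitz module is $\rho_{\CC_0'(W_{n-2}')}\boxtimes\varepsilon_1$, the factor on $H_2$ being the one-dimensional sign module $\varepsilon_1$. Now I would apply Lemma~\ref{indko1} to this $W'$ and this class: for all $\chi\in\Irr(W_n')$,
\[
\langle\rho_{\CC_0'},\chi\rangle_{W_n'}\leq\big\langle\Ind_{W'}^{W_n'}\big(\rho_{\CC_0'(W_{n-2}')}\boxtimes\varepsilon_1\big),\chi\big\rangle_{W_n'}.
\]
By the inductive hypothesis $\rho_{\CC_0'(W_{n-2}')}=\sum_{\alpha'\vdash(n-2)/2}\chi^{\alpha',+1}$, so the right-hand side equals $\sum_{\alpha'}\langle\Ind_{W'}^{W_n'}(\chi^{\alpha',+1}\boxtimes\varepsilon_1),\chi\rangle_{W_n'}$; for $\chi=\chi^{\alpha,-1}$ each summand is $0$ by Lemma~\ref{solveD2}. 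Hence $\langle\rho_{\CC_0'},\chi^{\alpha,-1}\rangle_{W_n'}\leq 0$, and being a multiplicity it vanishes; together with Example~\ref{kodn}(b) this forces $\nu_\alpha=+1$ for all $\alpha\vdash n/2$, completing the induction.

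For the last statement, since $\theta$ is a diagram automorphism it preserves the length function and the Bruhat--Chevalley order, so $w\mapsto\theta(w)$ intertwines the $\theta$-twist of the $W_n'$-module $V_{\CC_0'}$ with $V_{\theta(\CC_0')}$; thus $\rho_{\theta(\CC_0')}$ is the $\theta$-conjugate of $\rho_{\CC_0'}$, and since $\chi^{\alpha,-1}$ is by definition the $\theta$-conjugate of $\chi^{\alpha,+1}$ we obtain $\rho_{\theta(\CC_0')}=\sum_{\alpha\vdash n/2}\chi^{\alpha,-1}$. I expect the only genuinely delicate ingredient to be Lemma~\ref{solveD2}, the refined branching rule that distinguishes $\chi^{\alpha,+1}$ from $\chi^{\alpha,-1}$; granting it (it is proved in \cite{tkott}), the remainder is an upper bound from Lemma~\ref{indko1} plus routine inductive bookkeeping, with no further obstacle anticipated.
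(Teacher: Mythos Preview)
Your proof is correct and follows essentially the same route as the paper: induction on $n$ with the base case $n=2$ checked directly, then for $n\geq 4$ apply Lemma~\ref{indko1} to the parabolic $W'=W_{n-2}'\times H_2$ and the class of $\sigma_{0,n/2}$, combine the inductive hypothesis with Lemma~\ref{solveD2} to force $\langle\rho_{\CC_0'},\chi^{\alpha,-1}\rangle_{W_n'}=0$, and deduce the statement for $\theta(\CC_0')$ by $\theta$-conjugation. If anything, your formulation is slightly cleaner in that you only use $\CC_0'(W_{n-2}')\times\{s_{n-1}\}\subseteq \CC_0'\cap W'$, which is all Lemma~\ref{indko1} requires, whereas the paper asserts equality here.
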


\begin{proof} We prove this by induction on $n/2$. If $n=2$, then the
assertion is easily checked directly. The character table of $W_2'=\langle
u,s_1\rangle$ with the appropriate labelling of the characters is given
as follows.
\[\begin{array}{crrrr} \hline & 1 & s_1 & u & s_1u \\ \hline
\chi^{(2,\varnothing)} & 1 & 1 & 1 & 1 \\
\chi^{(11,\varnothing)} & 1 & -1 & -1 & 1 \\
\chi^{(1,+)} & 1 & -1 & 1 & -1  \\
\chi^{(1,-)} & 1 & 1 & -1 & -1 \\ \hline
\end{array} \qquad\quad \begin{array}{c} \rho_{\CC_0'}=\chi^{(1,+)},
\\ \\ \rho_{\theta(\CC_0')}=\chi^{(1,-)}. \\ \end{array}\]
Now assume that $n\geq 4$. Let
$W'=W_{n-2}'\otimes H_2$ be as in Lemma~\ref{solveD2}. As already noted 
in the above proof, the intersection $\CC_0' \cap W'$ is just the
conjugacy class of $W'$ containing $\sigma_{0,n/2}$. Hence, we are in
the setting of Lemma~\ref{indko1} and so 
\[ \big\langle \rho_{\CC_0'},\chi^{\alpha,-}\big\rangle_{W_n'}\leq 
\Big\langle \Ind_{W'}^{W_n'}(\rho_{\CC_0'\cap W'}),\chi^{\alpha,-}
\Big\rangle_{W_n'} \qquad \mbox{for all $\alpha \vdash n/2$}.\]
It will now be sufficient to show that the scalar product on the right
hand side is zero for all $\alpha \vdash n/2$. Now, since 
$\sigma_{0,n/2}=\sigma_{0, (n-2)/2}\times s_{n-1} \in W_{n-2}'\times H_2$,
we can apply induction and obtain 
\[ \rho_{\CC_0'\cap W'}=\Bigl(\sum_{\alpha' \vdash n/2} \chi^{\alpha',+1}
\Bigr) \boxtimes \varepsilon_1.\]
Then Lemma~\ref{solveD2} implies that 
\[ \Big\langle \Ind_{W'}^{W_n'}\bigl(\rho_{\CC_0'\cap W'}\bigr),
\chi^{\alpha,-1}\Big\rangle_{W_n'}=0 \qquad \mbox{for all $\alpha \vdash 
n/2$},\]
as required.
\end{proof}

\medskip

\begin{rema} \label{preD} Let $\CC$ be any conjugacy classes of involutions 
in $W_n$. We can associate with $\CC$ a character $\tilde{\rho}_{\CC}$ of 
$W_n$, as follows. If $\CC$ is contained in $W_n'$, let $\rho_{\CC}$ be the 
character of $W_n'$ as defined in Definition~\ref{defko}. Then 
$\tilde{\rho}_{\CC}$ will be the canonical extension described in 
\cite[\S 2]{gema}. If $\CC$ is contained in the coset $W_n't$, then we 
consider a similar extension of the ``twisted'' character defined in 
\cite[4.2]{kottwitz}. Then one can show that
\[ \big\langle \tilde{\rho}_{\CC},\Ind_{W_n'}^{W_n}([C])\big\rangle_{W_n}= 
|\CC \cap (C \cup tC)| \qquad \mbox{for any left cell 
$C \subseteq W_n'$}. \]
The proof of this equality, although quite similar to that of
Theorem~\ref{kottB}, requires a number of preparations concerning
``twisted'' involutions with respect to the non-trivial graph automorphism
of $W_n'$. Furthermore, the sets $C \cup tC$ can actually be interpreted 
as left cells for $W_n$, but with respect to the non-constant weight 
function with value $0$ on $t$ and value $1$ on all $s_i$; the characters 
of the corresponding left cell modules of $W_n$ are given by the induced 
characters on the left hand side. The whole argument is worked out in
\cite[\S 5]{tkott}.
\end{rema}

%

\begin{coro} \label{kottD} Let $W=W_n'$ be of type $D_n$, as above. Let
$\CG$ be a two-sided cell of $W_n'$ and $\chi_0 \in \Irr_{\CG}(W_n')$ be 
the unique special character. Let $\CC'$ be a conjugacy class of 
involutions in $W_n'$. Then
\[ \langle \rho_{\CC},[C]\rangle_{W_n'}=\langle \rho_{\CC},\chi_0
\rangle_{W_n'}=|\CC \cap C|\qquad \mbox{for any left cell 
$C \subseteq \CG$}.\]
Thus, Kottwitz' Conjecture~\ref{coko} holds for $W_n'$. In particular,
if $n$ is even and $\CC_0'$ denotes the conjugacy class of the element
$\sigma_{0,n/2}=s_1s_3\cdots s_{n-1}$, then 
\[ \langle \rho_{\CC_0'},[C]\rangle_{W_n'}=|\CC_0'\cap C|=1
\qquad \mbox{for any left cell $C\subseteq \CG_\alpha^+$ and
any $\alpha \vdash n/2$},\]
where $\CG_\alpha^+$ is the smooth two-sided cell as in Example~\ref{kodn1}.
\end{coro}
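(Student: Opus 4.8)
The plan is to follow the same strategy as the proof of Theorem~\ref{kottB} for type $B_n$, adapting it to the presence of the graph automorphism $\theta$. First I would treat the two-sided cells of $W_n'$ according to the dichotomy analogous to Remark~\ref{remkoB}: strongly non-cuspidal cells (handled by induction via Lemma~\ref{kott0}) and cuspidal cells (handled by the ``global counting'' trick at the end of the proof of Theorem~\ref{kottB}). As in the $B_n$ case, the first equality $\langle\rho_{\CC'},[C]\rangle_{W_n'}=\langle\rho_{\CC'},\chi_0\rangle_{W_n'}$ follows at once from Example~\ref{class1} (every left cell module is multiplicity-free, so $\langle[C],\chi_0\rangle_{W_n'}=1$) together with the fact, from Example~\ref{kodn}, that every constituent of $\rho_{\CC'}$ is special; for non-$\theta$-invariant classes this uses Proposition~\ref{signD}, which tells us the constituents are exactly the special characters $\chi^{\alpha,+1}$ (resp. $\chi^{\alpha,-1}$), which are special by Example~\ref{kodn1}(a). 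So the real content is the second equality.

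For the inductive step I would verify the hypotheses (K1), (K2), (K3) of Lemma~\ref{kott0}. Hypothesis (K1) is immediate from the identity $|\CC'\cap\CG|=\chi_0(1)|\CC'\cap C|$ in Example~\ref{class1} (it forces $|\CC'\cap C|$ to be independent of the choice of left cell $C\subseteq\CG$), and (K2) is the inductive hypothesis applied to the smaller parabolic. For (K3) one proceeds exactly as in the $B_n$ argument: given $\chi'\in\Irr_{\CG'}(W')$ and a class $\CC'$ of involutions in $W_n'$ meeting $W'$ with $\langle\rho_{\CC'\cap W'},\chi'\rangle_{W'}\neq 0$, one decomposes $\CC'\cap W'$ into a product class, uses Example~\ref{koan}(b) and the $B_n$/$D_n$ analogue of Example~\ref{kobn} to force $\chi'=\chi_0'$ and pin down the symmetric-group factor, and then invokes the combinatorial identities relating $d(\chi_0)$, $j_0(\chi_0)$ to those of the cuspidal datum. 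The cuspidal cells of $D_n$ exist only for suitable $n$ and are again finite in number, so once the conjecture is known for all larger two-sided cells one extracts it for the cuspidal one by expanding $\sum_i\chi_i(1)\langle\rho_{\CC'},\chi_i\rangle_{W_n'}$ into the regular character and using Example~\ref{class1} to evaluate $\sum_i\chi_i(1)|\CC'\cap C_i|=|\CC'|=\rho_{\CC'}(1)$, exactly as in the proof of Theorem~\ref{kottB}.

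The main obstacle is the unique non-$\theta$-invariant pair of classes $\{\CC_0',\theta(\CC_0')\}$ when $n$ is even, together with the characters $\chi^{\alpha,\pm 1}$ that do not extend to $W_n$; here the inductive machinery built on parabolic subgroups of type $B_{n-r}\times\SG_r$ does not directly apply, since those characters live outside the ``extendible'' part. This is exactly the point where Proposition~\ref{signD} (whose proof rests on the branching rule Lemma~\ref{solveD2} and on Lemma~\ref{indko1}, both proved in \cite{tkott}) is needed: it identifies $\rho_{\CC_0'}=\sum_{\alpha\vdash n/2}\chi^{\alpha,+1}$, so that for any left cell $C\subseteq\CG_\alpha^+$ we have $[C]=\chi^{\alpha,+1}$ by Lemma~\ref{lem:smooth} and Corollary~\ref{refine0}, whence $\langle\rho_{\CC_0'},[C]\rangle_{W_n'}=\langle\rho_{\CC_0'},\chi^{\alpha,+1}\rangle_{W_n'}=1$, while Example~\ref{kodn1}(c) gives $|\CC_0'\cap C|=1$; the same computation with $-1$ in place of $+1$ handles $\theta(\CC_0')$. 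Combined with Remark~\ref{preD} (proved in \cite[\S 5]{tkott}), which controls the interplay between left cells of $W_n'$ and those of $W_n$ under the relevant non-constant weight function, this completes the verification of (K3) in the exceptional case and hence yields Kottwitz' Conjecture for $W_n'$, including the displayed special case for $\CG_\alpha^+$.
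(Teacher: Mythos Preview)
Your approach is genuinely different from the paper's, and the difference is instructive.

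The paper does \emph{not} rerun the Lemma~\ref{kott0} induction for $D_n$. Instead, for every $\theta$-invariant class $\CC'$ (which is then a full conjugacy class in $W_n$), it invokes Remark~\ref{preD} directly: the identity $\big\langle \tilde{\rho}_{\CC'},\Ind_{W_n'}^{W_n}([C])\big\rangle_{W_n}= |\CC' \cap (C \cup tC)|$ combined with Frobenius reciprocity and the observation $tC\subseteq W_n\setminus W_n'$ gives $\langle\rho_{\CC'},[C]\rangle_{W_n'}=|\CC'\cap C|$ in one line. All of the $B_n$-style inductive work you propose for the $\theta$-invariant classes is thus replaced by a single appeal to the companion paper \cite{tkott}. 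You describe Remark~\ref{preD} as something used ``for (K3) in the exceptional case''; in fact it is the main engine for the generic case and makes (K1)--(K3) unnecessary there.

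For the non-$\theta$-invariant class $\CC_0'$ (when $n$ is even), the paper's argument and yours converge: Proposition~\ref{signD} plus smoothness of $\CG_\alpha^+$ forces $\langle\rho_{\CC_0'},[C]\rangle_{W_n'}\le |\CC_0'\cap C|$ for every left cell, and then summing over all left cells turns the inequality into an equality. This is \emph{not} routed through Lemma~\ref{kott0} at all.

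Your inductive strategy via Lemma~\ref{kott0} is not obviously wrong, but it would require substantially more work than you acknowledge: the parabolic subgroups in $D_n$ are of type $D_{n-r}\times A_{r-1}$ (not $B_{n-r}\times\SG_r$ as you write), the symbol combinatorics and the Kottwitz multiplicity formulas for $D_n$ have degenerate cases absent in $B_n$, and the cuspidal two-sided cells in $D_n$ are governed by a different numerical condition than $n=d^2+d$. So ``proceed exactly as in the $B_n$ argument'' hides real complications. The paper sidesteps all of this by pushing the $\theta$-invariant case into \cite{tkott}, at the cost of making the present proof dependent on that external reference.
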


\begin{proof} The equality $\langle \rho_{\CC},[C]\rangle_{W_n'}=
\langle \rho_{\CC},\chi_0\rangle_{W_n'}$ is shown as in the proof
of Theorem~\ref{kottB}, using Example~\ref{kodn}(a), (b). 

To prove Kottwitz' Conjecture, let us first deal with the case where $\CC'$ 
is a conjugacy class of involutions in $W_n'$ such that $\theta(\CC')=
\CC'$. Then $\CC'$ is a conjugacy class in $W_n$ and we can use the identity
in Remark~\ref{preD}. By Frobenius reciprocity we obtain:
\begin{equation*}
\langle \rho_{\CC'},[C]\rangle_{W_n'}= |\CC' \cap C| \qquad 
\mbox{for any left cell $C \subseteq W_n'$}.\tag{a}
\end{equation*}
It now remains to deal with the case where $n$ is even and $\CC'$ is
such that $\theta(\CC')\neq \CC'$. Let $\CC'=\CC_0'$ be the 
conjugacy class in Example~\ref{kodn}. First we will show that 
\begin{equation*}
\langle \rho_{\CC_0'},[C]\rangle_{W_n'}\leq |\CC_0'\cap C| \qquad 
\mbox{for any left cell $C\subseteq W_n'$}.\tag{b}
\end{equation*}
Indeed, let $C$ be a left cell in $W_n'$. If $\langle \rho_{\CC_0'},
[C]\rangle_{W_n'}=0$, then (a) is obvious. Now assume that $\langle 
\rho_{\CC_0'}, [C]\rangle_{W_n'}\neq 0$. Then, by Proposition~\ref{signD}, 
there exists some $\alpha \vdash n/2$ such that $\langle \chi^{\alpha,+1},
[C]\rangle_{W_n'}\neq 0$. So, using the notation in Example~\ref{kodn1}, 
we have $C \subseteq \CG_\alpha^+$ and $\CC_0'\cap \CG_\alpha^+\neq 
\varnothing$. Since $\CG_\alpha^+$ is smooth, we have $[C]=
\chi^{\alpha,+1}$ (see Lemma~\ref{lem:smooth}) and $\CC_0' \cap C \neq 
\varnothing$ (see Corollary~\ref{coro:conjugues}). Consequently, we see
that (b) holds. Once this is established, it actually follows that we must 
have equality in (b). Indeed, let $W=\coprod_{1\leq i \leq m} C_i$ be the
partition into left cells. By (b), we have 
\[ \Big\langle \rho_{\CC_0'},\sum_{1\leq i \leq m} [C_i]\Big\rangle_{W_n'}=
\sum_{1\leq i \leq m} \langle \rho_{\CC_0'},[C_i]\rangle_{W_n'}
\leq \sum_{1l\leq i \leq m} |\CC_0'\cap C_i|=|\CC_0'|.\]
But, $\sum_{1\leq i \leq m} [C_i]$ is the character of the regular
representation of $W_n'$ and so the left hand side also equals $|\CC_0'|=
\rho_{\CC_0'}(1)$. So all the inequalities in (b) must be equalities,
as required. The argument for $\theta(\CC_0')$ is completely analogous. 
Note that, by Example~\ref{kodn}(b), the character $\rho_{\theta(\CC_0')}$ 
is the conjugate of $\rho_{\CC_0'}$ under $\theta$. 
\end{proof}

%
%
\medskip

\end{document}
\begin{exemple} \label{glgu} Let $(W,S)$ be of type $A_{2n-1}$ where $W=
\SG_{2n}$. Let $w_0 \in W$ be the longest element and $\sigma \colon W 
\rightarrow W$ be the automorphism given by conjugation with $w_0$. Then 
the fixed point group $W^\sigma$ is a Coxeter group of type $B_n$, with
generators
\[ t=(n,n+1), \quad s_1=(n-2,n-1)(n+1,n+2), \quad \ldots,\quad 
s_{n-1}=(1,2)(2n-1,2n)\]
and Dynkin diagram as in Example~\ref{smoothb}. Then the restriction of the 
length function on $W$ gives rise to a weight function $\ph\colon 
W^\sigma \rightarrow \ZM_{>0}$ where
\[ b=\ph(t)=1 \qquad \mbox{and} \qquad a=\ph(s_1)=\cdots = \ph(s_{n-1})=2.\]
For a two-sided cell $\CG$ of $W$, we denote $\CG^\sigma:=\CG \cap 
W^\sigma$. Then, by ??, we have that each set $\CG^\sigma$ is 
either empty or a two-sided cell of $W^\sigma$ (with respect to $\phi$); 
furthermore, all two-sided cells of $W^\sigma$ (with respect to $\ph$) 
arise in this way. Now recall from Example~\ref{remsn1} that all two-sided 
cells of $W$ are smooth. By Example~\ref{smoothb}(b), the same also holds
for the two-sided cells of $W^\sigma$ (with respect to $\ph$).

Now let $\CG$ be a two-sided cell and $C$ be the unique conjugacy class 
of involutions in $W$ such that $\CG \cap C \neq \varnothing$. Then we
claim that 
\[ \CG^\sigma \neq \varnothing \qquad \Rightarrow \qquad \CG^\sigma 
\cap C \neq \varnothing.\]
Hence, the unique conjugacy class of $W^\sigma$ which contains all 
the involutions in $\CG^\sigma$ is contained in $C$.
Example~\ref{remsn1}.
\end{exemple}

\section{Concluding remarks}\label{section:examples}

\springer{{\bf Hypothesis.} {\it Throughout this section we assume 
that Lusztig's Conjectures P's hold for $(W,S,\ph)$, where $\ph : S 
\to \G_{>0}$ is any weight function.}}

\medskip
We shall now briefly indicate how it might be possible to
prove Proposition~\ref{eqpa1} in general. For this purpose, we recall
the following conjecture from \cite{geck pycox} which would provide
an analogue of the notion of {\it special} characters for the general
case of unequal parameters. 

\medskip
\begin{conjecture}[\protect{\cite[5.11]{geck pycox}}] \label{cspec} For
any $\chi \in \Irr(W)$ and $w\in W$, we set 
\[ c_{w,\chi}^*:= (-1)^{\ell(d)+\ell(w)}\,n_d \,c_{w,\chi} \qquad \mbox{where}
\qquad w \siml d \in \DC.\]
Let 
\[ \SC_\ph(W):=\{ \chi\in \Irr(W) \mid c_{w,\chi}^* \geq 0 \mbox{ for all
$w \in W$}\}.\]
Then, for each left cell $C$ of $W$, there is a unique $\chi\in \SC_\ph(W)$
such that $\langle [C],\chi\rangle_W\neq 0$; furthermore, for this $\chi$, 
we have $\langle [C],\chi\rangle_W=1$ and $c_{w,\chi}^*>0$ for all 
$w \in C\cap C^{-1}$. 
\end{conjecture}

(As discussed in \cite[5.5]{geck pycox}, we do have $\SC_\ph(W)=\SC(W)$ in 
the equal parameter case where $\Gamma=\ZM$ and $\ph(s)=1$ for all $s \in S$.)

\medskip

\begin{lem} \label{try} Assume that Conjecture~\ref{cspec} holds. Let 
$\CG$ be a two-sided cell and $C,C'$ be left cells of $W$ which are 
contained in $\CG$. Let $\CC$ be a conjugacy class of involutions in $W$. 
Then $\CC\cap C \neq \varnothing$ if and only if $\CC\cap C'\neq 
\varnothing$. In particular, we have $\CC_2(C)=\CC_2(C')$.
\end{lem}

\begin{proof} Let $\chi_C$ be the unique character in $\SC_\ph(W)$ such 
that $\langle [C],\chi_C\rangle_W=1$. Then the version of the 
$(\CG,C,\CC)$-identity in Remark~\ref{main1a} yields:
\begin{equation*}
\sum_{w \in \CC \cap \CG} (-1)^{\ell(w)} c_{w,\chi_C}=\pm \chi_C(1)
\sum_{w \in \CC \cap C} c_{w,\chi_C}^*,\tag{$*$}
\end{equation*}
where the sign on the right hand side equals $(-1)^{\ell(d)}n_d$. Similarly, 
we have 
\begin{equation*}
\sum_{w \in \CC \cap \CG} (-1)^{\ell(w)} c_{w,\chi_{C'}}=\pm 
\chi_{C'}(1) \sum_{w \in \CC \cap C'} c_{w,\chi_{C'}}^*,\tag{$*^\prime$}
\end{equation*}
where the sign on the right hand side comes from the unique element in 
$\DC \cap C'$ and $\chi_{C'}$ is the unique character in $\SC_\ph(W)$ such 
that $\langle [C],\chi_{C'}\rangle_W=1$. Now, if $\chi_C=\chi_{C'}$, then 
we can carry out the argument exactly as in the proof of 
Proposition~\ref{eqpa1}. So let us now assume that $\chi_C \neq \chi_{C'}$. 
We claim that
\begin{align*}
c_{w,\chi_C}^*&=0 \qquad \mbox{for all $w \in C'$},\\
c_{w,\chi_{C'}}^*&=0 \qquad \mbox{for all $w \in C$}.
\end{align*}
This is seen as follows. Assume, if possible, that $c_{y,\chi_C}\neq 0$
for some $y \in C'$. Then, since $\chi_C\neq \chi_{C'}$, the relations
in Proposition~\ref{refine1} show that 
\[ \sum_{w \in C'} c_{w,\chi_C}^*\,c_{w,\chi_{C'}}^*=\sum_{w \in C'} 
c_{w,\chi_C}\,c_{w,\chi_{C'}}=0.\]
But this is impossible, since $c_{w,\chi_{C'}}^*>0$ and $c_{w,\chi_C}^*\geq 0$ 
for all $w \in C'$, and also $c_{y,\chi_C}^*>0$. The proof of the second
statement is completely analogous. Hence, the right hand side of the
$(\CG,C,\CC)$-identity with respect to the character $\chi_{C'}$ will
be zero. So, adding the two sides of this identity to the two sides
of ($*$) yields:
\[\sum_{w \in \CC \cap \CG} (-1)^{\ell(w)} \bigl(c_{w,\chi_C}+c_{w,\chi_{C'}})
\bigr)=\pm \chi_C(1)\sum_{w \in \CC \cap C} c_{w,\chi_C}^*.\]
Similarly, the right hand side of the $(\CG,C',\CC)$-identity with 
respect to the character $\chi_{C}$ will be zero. So, adding this
identity to $(*^\prime$) yields: \marginpar{PROBLEM??}
\[\sum_{w \in \CC \cap \CG} (-1)^{\ell(w)} \bigl(c_{w,\chi_C}+
c_{w,\chi_{C'}})
\bigr)=\pm \chi_{C'}(1)\sum_{w \in \CC \cap C'} c_{w,\chi_{C'}}^*.\]
Hence, since the two left hand sides are equal, we conclude that 
\[\pm \chi_C(1)\sum_{w \in \CC \cap C} c_{w,\chi_C}^*=
\pm \chi_{C'}(1)\sum_{w \in \CC \cap C'} c_{w,\chi_{C'}}^*\]
and we can now continue exactly as in the proof of Proposition~\ref{eqpa1}.
\end{proof}
\end{document}